\newtheorem{theorem}{Theorem}[section]
\newtheorem{proposition}[theorem]{Proposition}
\newtheorem{lemma}[theorem]{Lemma}
\newtheorem{corollary}[theorem]{Corollary}
\newtheorem{proof}{\textmd{\textit{Proof.}}}
\newtheorem{remark}[theorem]{Remark}
\newcommand{\qedd}{\hfill \Box}
\newcommand{\R}{\ensuremath{\mathbb{R}}}
\newcommand{\Sph}{\ensuremath{\mathbb{S}}}
\newcommand{\cC}{\ensuremath{\mathcal{C}}}
\title{Riemannian and Finslerian spheres with \\ fractal cut loci
\footnote{
Mathematics Subject Classification (2010)\,:\,53C60, 53C22.}
\footnote{
Keywords: non-reversible Finsler surfaces, geodesics, cut locus, distance.}
}
\author{Jinichi ITOH, Sorin V. SABAU}
\date{}
\begin{document}


\maketitle

\begin{abstract}
The present paper shows that for a given integer $k\geq 2$ it is possible to construct an at least $k$-differentiable Riemannian metric on the sphere of a certain dimension such that the cut locus of a point of it
 becomes a fractal. Moreover, we show that this construction can be extended to the case of Finsler sphere as well.
\end{abstract}

\section{Introduction}\label{sec:1}

 The cut locus $\cC(p)$ of a point $p$ in a Riemannian or Finsler manifold is, roughly speaking, 
 the set of all other points for which there are multiple minimizing geodesics connecting them from $p$. Of course, in some special cases, it may contain additional points where the minimizing geodesic is unique. 

The notion of cut locus was introduced and studied for the first time by H. Poincar\'e in 1905 for the Riemannian case. Later on,  in the case of a two dimensional analytical sphere, S. B. Myers has proved
in 1935 that the cut locus of a point is a finite tree (\cite{M}) in both Riemannian and Finslerian cases. Moreover, in the case of an analytic Riemannian manifold, M. Buchner has shown the triangulability of $\cC(p)$ (\cite{Buch1}), and has determined its local structure for the low dimensional case (\cite{Buch2}) in 1977 and 1978, respectively. 

Despite of the vast literature  existing for the Riemannian case, the investigations of the cut locus of a Finsler manifolds are scarce (see \cite{BCS}, \cite{S}, \cite{LN}).

Recently, it was shown that the cut locus of a closed subset $N$ of a Finsler surface has the structure of a local tree being a union of rectifiable Jordan arcs (\cite{TS}). Even though the results are similar to the Riemannian case, showing that there is nothing special about the metric structure to be Riemannian, one should pay always attention to the fact that, unlike its Riemannian correspondent,  the Finslerian distance is not symmetric, so the proofs and arguments are quite different.

Returning to the Riemannian case, in the case of an arbitrary metric, the cut locus of a point can have a very complicated structure. For example, H. Gluck and D. Singer have constructed a $C^\infty$ Riemannian manifold that has a point whose cut locus is not triangulable (\cite{GS}).

There is a closed relationship between the complexity of the cut locus and the regularity of the metric, regardless it is Riemannian or Finslerian.
Indeed, if the metric has a certain degree of regularity, then the cut locus of a point may enjoy a simple structure. However, if the metric loses its regularity, the cut locus might become a very complicated set, for example a fractal. Recall that, roughly speaking, a fractal is a set whose Hausdorff dimension is not an integer (see \cite{Fal} for alternative definitions and examples), fact that make fractals typical examples of what we call \lq\lq complicated sets". 

Let us mention that the cut locus of any point on a $C^\infty$ Riemannian manifold can not be a fractal (see \cite{IT}). However, there is a $C^{1,1}$ Riemannian metric on the two dimensional sphere $\Sph^2$ and a point $p\in \Sph^2$ such that the total length of $\cC(p)$ is infinite (see \cite{I1}).  

Motivated by all these, in the present paper, we are going to study the following two questions:
{\it 
\begin{enumerate}
\item There exists Riemannian metrics having points whose cut locus is a fractal?
\item There are more general metric structures, for example Finsler metrics, with the same property?
\end{enumerate}
}

The answer to both questions above is affirmative. Indeed, in the present paper we construct an at least $k$ differrentiable, $2\leq k<\infty$,  Riemannian metric on a topological sphere $\Sph^n$, provided the dimension $n$ is high enough, namely, we prove

\begin{theorem}\label{theorem A}
For any integer $2\leq k<\infty$ there is an at least $k$-differentiable Riemannian metric on the $n(k)$-dimensional sphere $\Sph^{n(k)}$ and a point $p$ in $\Sph^{n(k)}$ such that the Hausdorff dimension of $\cC(p)$ is a real number between 1 and 2, where $n(k):=\frac{3^{k+1}}{2}+1$. 
\end{theorem}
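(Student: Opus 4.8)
\emph{Sketch of the intended proof.} The plan is to realize $\cC(p)$ as a self-similar fractal tree, obtained as the limit of a cut locus through infinitely many local surgeries on a round sphere. Fix once and for all a number $d\in(1,2)$ and a contraction ratio $r$ with $\tfrac12<r<\tfrac1{\sqrt2}$ and $d=\log 2/\log(1/r)$; the abstract self-similar tree $T_\infty$ in which each edge carries at its far endpoint two copies of $T_\infty$ scaled by $r$ then has Hausdorff dimension exactly $d$, and the range $r>\tfrac12$ is precisely the one in which $T_\infty$ has infinite total length, so that $\cC(p)$ is forced to fail to be a countable union of rectifiable arcs and hence to have dimension strictly above $1$ (consistently with the fact, recalled in the introduction, that a $C^\infty$ metric can never achieve this). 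The whole point is that producing $T_\infty$ exactly, and only to finite regularity, must be paid for by working in a high ambient dimension, with the room roughly tripling for each extra derivative one wants --- which is what the formula $n(k)=\tfrac{3^{k+1}}{2}+1$, equivalently $n(k)-1=3\bigl(n(k-1)-1\bigr)$, records.

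The \textbf{base step} is a Model-Lemma-type construction: on the round $\Sph^{n(k)}$ one perturbs the metric inside a small geodesic ball around the far point $q$ of $p$, keeping it round near the boundary of that ball, so that $\cC(p)$ becomes a prescribed finite ``seed'' tree with two marked free tips and so that every minimizing geodesic issuing from $p$ remains under control; the new Toponogov-type comparison theorem is used here to guarantee that the perturbation creates no cut points outside the ball and that geodesics entering the ball behave predictably. Carrying out this single branching surgery while simultaneously reserving enough smoothness for the much finer surgeries to come is what consumes the multiplicative factor of (about) three dimensions, and iterating this cost from the base case yields $n(k)$.

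The \textbf{iteration} (the Essential Lemma) then proceeds as follows: at stage $j$ one inserts rescaled copies of the seed surgery into pairwise disjoint geodesic balls of radius $\asymp r^{\,j}$ centered at the free tips of the current tree, so that after stage $j$ the cut locus equals the $j$-th approximant $T_j$ of $T_\infty$. Two properties must be made to survive the limit $j\to\infty$. First, the metrics must converge in $C^{k}$, which amounts to summability of the $C^{k}$-norms of the stage-$j$ perturbations; it is here that the $n(k)$ available dimensions are spent, since the extra room lets each rescaled bump be chosen flat enough to keep those norms summable despite $r>\tfrac12$. Second, the cut locus must stabilize to $T_\infty$ exactly, with neither spurious nor missing cut points; this is where the comparison theorem is used globally, to ensure that surgeries at different scales do not interfere and that no minimizer is diverted, while the Isotopy Lemma identifies the resulting $C^{k}$ manifold with $\Sph^{n(k)}$. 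For the limit metric one then has $\cC(p)=T_\infty$, whence its Hausdorff dimension equals $d\in(1,2)$.

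The \textbf{main obstacle} is the simultaneous control of the two competing scales just mentioned: the geometric ratio $r^{\,j}$ has a lower bound forced on it by $\dim_H T_\infty>1$, whereas the $C^{k}$-size of the $j$-th surgery must decay fast enough to be summable, and these are reconcilable only by performing each surgery in sufficiently many extra dimensions --- pinning down exactly how many, namely $3^{k+1}$, together with the attendant gluing estimates is the technical heart of the argument. A secondary but essential difficulty is proving that the cut locus of the limit metric is \emph{exactly} the fractal tree and nothing more, which requires a global analysis of minimizing geodesics through infinitely many patched regions and is the reason a new comparison theorem of Toponogov type is needed.
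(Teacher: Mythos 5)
Your sketch takes a genuinely different route (iterated metric surgeries on a round sphere, binary self-similar tree with ratio $r>\tfrac12$), but as it stands it has gaps that are not merely technical. First, the step ``$r>\tfrac12$ gives $T_\infty$ infinite total length, so $\cC(p)$ is forced to have dimension strictly above $1$'' is a non sequitur: infinite one-dimensional Hausdorff measure does not imply Hausdorff dimension greater than $1$ (the paper's own introduction cites a $C^{1,1}$ metric on $\Sph^2$ whose cut locus has infinite length). The dimension must be computed directly, as the paper does, by verifying the open set condition for the similarity maps and solving the Moran equation $\sum_j c_j^s=1$, then transferring the answer to the embedded endpoint set by a bi-Lipschitz argument. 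Second, your explanation of where $n(k)$ comes from (summability of the $C^k$-norms of the stage-$j$ perturbations, each derivative costing a factor of three in dimension) is not substantiated and is not the actual mechanism: in the paper the ambient dimension $n$ fixes the branching number $2n-1$ of the tree, the regularity $k$ fixes the contraction ratio $3^{-(k-1)}$, and $n(k)$ is determined by requiring $s=\log(2n-1)/((k-1)\log 3)\in(1,2)$; the $k$-differentiability is then proved separately by a finite-difference estimate on the height function of the enveloping hypersurface at the accumulation points. With a binary tree and $r>\tfrac12$ you cannot decouple these two constraints, which is precisely why the paper uses many branches with a very small ratio.

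Third, and most seriously, the base step and the iteration are only named, not constructed. Producing a metric ball whose inward cut locus is a prescribed tree, with quantified regularity at the limit points, is the entire content of the theorem, and the ``Model Lemma,'' ``Essential Lemma,'' ``Isotopy Lemma'' and ``new Toponogov-type comparison theorem'' you invoke do not appear anywhere in the paper's argument. The paper avoids all global control of geodesics through infinitely many patches by a completely different device: it keeps the metric \emph{flat} on a convex body $\widetilde D\subset\R^{n}$ built from truncated cones and spherical caps, so that the inward cut locus of $\partial\widetilde D$ is the tree by elementary Euclidean geometry, smooths the boundary by an explicit bump-function construction, and then caps off with a second ball via Weinstein's ``making any disc a unit disc'' theorem to obtain the sphere and the point $p$. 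If you want to pursue the surgery approach you would need to actually prove the seed lemma and the non-interference and convergence statements; in their absence the proposal does not constitute a proof.
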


Moreover, we show that there is a Finsler metric of Randers type on this sphere with the same property. Indeed, if we use the same notations as in Theorem \ref{theorem A}, we have

 \begin{theorem}\label{theorem B}
For any integer $2\leq k<\infty$, under the influence of a suitable magnetic field $\beta$ defined on $\Sph^{n(k)}$, there is an at least $k$-differentiable non-Riemannian Finsler metric of Randers type on $\Sph^{n(k)}$ such that the cut locus of the point $p$ with respect to this Finsler metric coincides with $\cC(p)$.
\end{theorem}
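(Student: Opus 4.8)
The plan is to obtain $F$ as a \emph{trivial} Randers deformation of the Riemannian metric $\alpha$ constructed in Theorem \ref{theorem A}, using an exact $1$-form as magnetic potential. Write $\alpha$ for that $C^k$ metric on $\Sph^{n(k)}$ and $p$ for the point with $1<\dim_H\cC(p)<2$. Since $\Sph^{n(k)}$ is compact, one may choose a non-constant $f\in C^\infty(\Sph^{n(k)})$ with $\|df\|_\alpha<1$ everywhere (take any non-constant smooth function and multiply it by a sufficiently small positive constant). Put $\beta:=df$ and $F:=\alpha+\beta$. The bound $\|\beta\|_\alpha<1$ makes $F$ a genuine strongly convex, positive Finsler metric of Randers type; it inherits the regularity of $\alpha$, hence is at least $C^k$; and it is non-Riemannian because $\beta\not\equiv 0$ (replacing $y$ by $-y$ in $\alpha+\beta=\sqrt{g_{ij}y^iy^j}$ would force $\beta=0$). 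Here $\beta$ is the magnetic field entering the Randers metric, and its crucial feature is that it is closed, indeed exact, so that $d\beta=0$; this is the sense in which the field is "suitable", and it is what will keep the cut locus unchanged.

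The core step is a distance comparison. For any piecewise-$C^1$ curve $\gamma\colon[0,1]\to\Sph^{n(k)}$ from $x$ to $y$ one has
\[
L_F(\gamma)=\int_0^1\big(\alpha(\gamma,\dot\gamma)+df(\dot\gamma)\big)\,dt
=L_\alpha(\gamma)+\int_0^1\tfrac{d}{dt}f(\gamma(t))\,dt
=L_\alpha(\gamma)+f(y)-f(x).
\]
Since the correction term $f(y)-f(x)$ is independent of the curve once the endpoints are fixed, taking the infimum over curves joining $x$ to $y$ gives $d_F(x,y)=d_\alpha(x,y)+f(y)-f(x)$; moreover the minimizers realizing $d_F(x,y)$ are exactly the minimizers realizing $d_\alpha(x,y)$, as point sets, differing only by reparametrization. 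Consequently, if $\gamma$ is a unit-speed $\alpha$-geodesic issuing from $p$, then the sub-arc $\gamma|_{[0,t]}$ is $F$-minimizing precisely when it is $\alpha$-minimizing, so the forward $F$-cut time of $\gamma$ agrees, after reparametrization, with its $\alpha$-cut time.

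It then follows that the cut locus of $p$ for $F$ coincides, as a subset of $\Sph^{n(k)}$, with the Riemannian cut locus $\cC(p)$: a point $q$ arises as the endpoint of a maximal $F$-minimizing geodesic from $p$ if and only if it arises the same way for $\alpha$, because both "being a minimizing geodesic from $p$" and "ceasing to minimize beyond $q$" are properties of the common family of minimizing curves. Hence $\cC_F(p)=\cC(p)$, and by Theorem \ref{theorem A} this set has Hausdorff dimension strictly between $1$ and $2$, so it is a fractal. I expect no substantial obstacle here; the two points requiring care are (i) using the correct notion of cut locus in the non-reversible Finsler setting — it must be formulated via cut times of geodesics emanating from $p$, in the spirit of \cite{TS}, rather than via symmetric metric balls — and (ii) observing that the mere $C^k$ regularity of $F$ is harmless, since the identity $\cC_F(p)=\cC(p)$ is deduced solely from the distance formula $d_F(x,y)=d_\alpha(x,y)+f(y)-f(x)$ and uses nothing beyond it.
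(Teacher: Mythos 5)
Your proposal is correct, but it takes a noticeably more economical route than the paper. The paper does not take a generic exact perturbation: it builds the magnetic field $\beta$ explicitly, region by region, on the balls $\widetilde D_m$ (forms of the type $-h_i(r)\,dr$ with bump functions $h$ as in its Lemma \ref{lemma for h}, vanishing at the ends of the inward rays and on $\partial\widetilde D$), extends it to the ``tropical'' $\varepsilon$-dilatation band of the glued sphere, and then argues: $\beta$ is closed, hence the Randers geodesics of $F=g+\beta$ coincide with the $g$-geodesics as point sets, and the Finslerian lengths of all inward rays are equal (an explicit computation, their analogue of \eqref{finslerian length}), so the cut locus is unchanged; along the way it obtains side results such as projectively flat Randers metrics on $\widetilde D_m$ whose inner cut locus of the boundary is the finite tree $IT_m$. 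Your argument replaces all of this by the observation that on the simply connected sphere a closed magnetic field may as well be taken exact, $\beta=df$ with $\|df\|_\alpha<1$, and then $L_F(\gamma)=L_\alpha(\gamma)+f(y)-f(x)$ gives $d_F(x,y)=d_\alpha(x,y)+f(y)-f(x)$, so the forward minimizers from $p$, and hence the cut times and the cut locus, are literally unchanged; non-Riemannianness follows from $df\not\equiv 0$ and the regularity is inherited from $\alpha$. What your approach buys is rigor exactly where the paper is briefest: ``same geodesics as point sets'' alone does not preserve cut points, and your path-independent correction term supplies the missing length comparison in one line. What the paper's approach buys is the concrete geometric picture it wants (a magnetic field supported near the equator, vanishing at $p$ and along $\partial D$, acting on the rays orthogonal to the hypersurfaces) together with the auxiliary propositions about cut loci of hypersurfaces for Randers metrics; your $f$ is unconstrained and carries none of that structure, but none of it is needed for the statement of Theorem \ref{theorem B}.
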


\acknowledgement{ We express our gratitude to H. Shimada for many useful discussions during the preparation of this manuscript.}


\section{Basic construction}\label{sec:2}                                  

In this section, we will construct:
\begin{itemize}
\item an infinite tree $IT$ in $\R^n$ with the end points set $E$,
\item a closed, convex ball $H$ in $\R^n$ with $C^1$-boundary that contains $IT$ and $E\subset \partial H$.
\end{itemize}

The set $E$ is actually a fractal whose Hausdorff dimension is a value between 1 and 2 (see \cite{Fal} for definitions).

We begin by defining three infinite series of numbers 
\begin{equation}\label{def t, l, r}
\begin{split}
& t_i  :=\Bigl(\frac{1}{3^{k-1}}\Bigr)^i, \qquad i\in \{0,1, \dots\}\\
& l_i  :=\frac{t_i}{\sin\bigl(\dfrac{\phi}{3^i}\bigr)},\ \qquad i\in \{0,1, \dots\}\\
& r_i :=\sum_{\nu=i+1}^\infty l_\nu\cos(\frac{\phi}{3^\nu}),\qquad i\in \{-1,0,1, \dots\},
\end{split}
\end{equation}
where $\phi\in(0,\dfrac{\pi}{2})$ is an arbitrary fixed angle and $k\geq 2$ a fixed integer.

One can easily see that $(t_i)$, $(l_i)$ and $(r_i)$  are monotone decreasing series that converge to zero, for $i\to\infty$.

We will use these in order to construct a fractal set in $\R^n$. For the moment we do not assume any relation between $n$ and $k$. 

Let us consider in $\R^n$ the points $o$, $q$, $q_0$, $\dots$, $q_{j_1j_2\dots j_m}$, where 
$m\in\{1,2,\dots\}$, $i\in \{1,2,\dots,m\}$, and $j_i\in\{-(n-1),\dots,-1,0,1,\dots,n-1\}$, defined as follows.
\begin{equation}
o:=(0,0,\dots,0),\qquad q:=(l_0,0,\dots,0),
\end{equation}

$\boxed{m=1}$
\begin{equation}
\begin{split}
& q_0:=(l_0+l_1,0,\dots,0)\\
& q_{j_1}:=(l_0+l_1\cos(\frac{\phi}{3}),0,\dots,0,l_1\sin(\frac{\phi}{3})a_1,0,\dots,0),\\
& \qquad\qquad\qquad \qquad\qquad\qquad\quad \hat{\ ^{|j_1|+1}}
\end{split}
\end{equation}
\qquad where $j_1\in\{-(n-1),\dots,-1,0,1,\dots,n-1\}$, the symbol $ \hat{}$ shows the position of a component in a vector, and $|\cdot|$ is the usual absolute value of a real number; 

\bigskip
$\boxed{m>1}$
\begin{equation}
\begin{split}
& q_{\underbrace{0\dots0}_m}:=(\sum_{i=0}^ml_i,0,\dots,0)\\
& q_{\underbrace{0\dots j_m}_m}:=(\sum_{i=0}^{m-1}l_i+l_m\cos(\frac{\phi}{3^m}),0,\dots,0,l_1\sin(\frac{\phi}{3^m})a_m,0,\dots,0),\\
& \qquad\qquad\quad \quad\qquad\qquad\qquad\qquad\qquad\qquad\quad \hat{\ ^{|j_m|+1}}\\
& q_{j_1j_2\dots j_m}:=R^{|j_1|+1}(q,\frac{\phi}{3}a_1)\circ R^{|j_2|+1}(q,\frac{\phi}{3^2}a_2)\circ
\dots\\
&\qquad \qquad \quad \circ R^{|j_{m-1}|+1}(q_{j_1j_2\dots j_{m-1}}\frac{\phi}{3^{m-1}}a_{m-1})\circ q_{00\dots j_m},
\end{split}
\end{equation}
where 
\begin{equation}
a_i:=
\begin{cases}
-1,\qquad \textrm{ if}\ j_i<0\\
\ \ 0,\qquad \textrm{ if}\ j_i=0\\
\ \ 1,\qquad \textrm{ if}\ j_i> 0
\end{cases},
\end{equation}
and $R^j(x,\theta)$ is the rotation of angle $\theta$ around the affine subspace that is orthogonal to the $<e_1,e_j>$ plane and contains $x$. Here $e_j$ is the unit vector with all components zero except the  $j$-th component which is 1, namely
\begin{equation*}
\begin{split}
& e_j=(0,\dots,1,\dots,0),\\
& \ \qquad\qquad\quad \hat{^{j}}
\end{split}
\end{equation*}
$x\in \{q_{j_1j_2\dots j_m}: j_1, j_2, \dots j_m\ \textrm{ as above}\}$ and $\theta\in\{\dfrac{\phi}{3^i}:i=1,\dots,m-1\}$.

One can easily see that the points $q_{j_{1}j_{2}\cdots j_{m-1}*}$ are all on the sphere with center $q_{j_{1}j_{2}\cdots j_{m-1}}$ and radius $l_{m}$, for any fixed $m>0$ and $j_{i}$, $i\in\{1,2,\dots,m\}$. 

For later use we denote the segment between $o$ and $q$ by $s$, the segment between $q$ and $q_{j_{1}}$ by $s_{j_{1}}$ and so on inductively, such that the segment between $q_{j_{1}j_{2}\cdots j_{m-1}}$ and $q_{j_{1}j_{2}\cdots j_{m}}$ will be denoted by $s_{j_{1}j_{2}\cdots j_{m-1}j_{m}}$. 

Likely, we denote the ray from $o$ that contains $q$ by $\gamma$, the ray from $q$ that contains $s_{j_{1}}$ by $\gamma_{j_{1}}$ and so on inductively, such that the ray from $q_{j_{1}j_{2}\cdots j_{m-1}}$ that contains $s_{j_{1}j_{2}\cdots j_{m-1}j_{m}}$ will be denoted by $\gamma_{j_{1}j_{2}\cdots j_{m-1}j_{m}}$. 

Let $IT_{0}:=\{s\}$, $IT_{m}:=\cup_{i=1}^{m}\{s_{j_{1}j_{2}\cdots j_{m-1}j_{m}}\}$ be the union of segments $s_{j_{1}j_{2}\cdots j_{m-1}j_{m}}$, $IT:=\lim_{m\to \infty}IT_{m}$ be the infinite tree and let $Q:=\cup_{m=1}^{\infty}\{q_{j_{1}j_{2}\cdots j_{m-1}j_{m}}\}$ be the set of points $q_{j_{1}j_{2}\cdots j_{m-1}j_{m}}$.

Taking into account \eqref{def t, l, r} and the construction above, one can see that $\lim_{i\to \infty}l_{1}=0$ implies $L:=\sum_{i=0}^{\infty}l_{i}$ is finite, therefore the edges can not prolong to infinity, so the set $Q$ must have a subset of limit points $E\subset Q$. 

The set $E$ is in fact the set of {\it end points} of the infinite tree $IT$, except the root point $o$ (see Figure 1). Moreover, one can see that $IT$ is completely contained in the ball with center $o$ and radius $L$ .

\bigskip


\begin{tikzpicture}
  [grow cyclic,
   level 1/.style={level distance=12mm,sibling angle=60}]
  \coordinate [rotate=-90] 
    child foreach \x in {1,2,3};
      \draw (-1.5,0) -- (0,0);
      \pgftext[base,x=-1.5cm,y=-0.5cm] {$o$};
      \pgftext[base,x=0cm,y=-0.5cm] {$q$};
       \pgftext[base,x=0.3cm,y=1cm] {$q_1$};
        \pgftext[base,x=1cm,y=-0.5cm] {$q_0$};
        \pgftext[base,x=0.2cm,y=-1.2cm] {$q_{-1}$};
        \pgftext[base,x=0cm,y=-2cm] {$\ $};
\end{tikzpicture}
\qquad\qquad
\begin{tikzpicture}
  [grow cyclic,
   level 1/.style={level distance=12mm,sibling angle=60},
   level 2/.style={level distance=6mm,sibling angle=40}]
  \coordinate [rotate=-90] 
    child foreach \x in {1,2,3}
      {child foreach \x in {1,2,3}};
      \draw (-1.5,0) -- (0,0);
      \pgftext[base,x=-1.5cm,y=-0.5cm] {$o$};
      \pgftext[base,x=0cm,y=-0.5cm] {$q$};
       \pgftext[base,x=0.3cm,y=1cm] {$q_1$};
       \pgftext[base,x=1cm,y=-0.5cm] {$q_0$};
       \pgftext[base,x=0.2cm,y=-1.2cm] {$q_{-1}$};
        \pgftext[base,x=0.2cm,y=1.7cm] {$q_{11}$};
         \pgftext[base,x=1cm,y=1.7cm] {$q_{10}$};
         \pgftext[base,x=1.7cm,y=1.3cm] {$q_{1-1}$}; 
         \pgftext[base,x=0.2cm,y=-1.8cm] {$q_{-11}$};
         \pgftext[base,x=1.2cm,y=-1.8cm] {$q_{-10}$};
         \pgftext[base,x=1.8cm,y=-1.3cm] {$q_{-1-1}$}; 
         \pgftext[base,x=2cm,y=0.4cm] {$q_{01}$};
         \pgftext[base,x=2.2cm,y=-0.2cm] {$q_{00}$};
         \pgftext[base,x=2.2cm,y=-0.6cm] {$q_{0-1}$}; 
          \pgftext[base,x=0cm,y=-2cm] {$\ $};
\end{tikzpicture}
\qquad\qquad
\begin{tikzpicture}
  [grow cyclic,
   level 1/.style={level distance=12mm,sibling angle=60},
   level 2/.style={level distance=6mm,sibling angle=40},
   level 3/.style={level distance=4mm,sibling angle=25}]
  \coordinate [rotate=-90] 
    child foreach \x in {1,2,3}
      {child foreach \x in {1,2,3}
        {child foreach \x in {1,2,3}}};
         \draw (-1.5,0) -- (0,0);
         \pgftext[base,x=-1.5cm,y=-0.5cm] {$o$};
      \pgftext[base,x=0cm,y=-0.5cm] {$q$};
       \pgftext[base,x=0.3cm,y=1cm] {$q_1$};
       \pgftext[base,x=1cm,y=-0.5cm] {$q_0$};
       \pgftext[base,x=0.2cm,y=-1.2cm] {$q_{-1}$};
\end{tikzpicture}

\bigskip

\bigskip

\begin{tikzpicture}
  [grow cyclic,
   level 1/.style={level distance=12mm,sibling angle=60},
   level 2/.style={level distance=6mm,sibling angle=40},
   level 3/.style={level distance=4mm,sibling angle=25},
   level 4/.style={level distance=2mm,sibling angle=20} ]
  \coordinate [rotate=-90] 
    child foreach \x in {1,2,3}
      {child foreach \x in {1,2,3}
        {child foreach \x in {1,2,3}
        {child foreach \x in {1,2,3}}}};
         \draw (-1.5,0) -- (0,0);
         \pgftext[base,x=-1.5cm,y=-0.5cm] {$o$};
      \pgftext[base,x=0cm,y=-0.5cm] {$q$};
       \pgftext[base,x=0.3cm,y=1cm] {$q_1$};
       \pgftext[base,x=1cm,y=-0.5cm] {$q_0$};
       \pgftext[base,x=0.2cm,y=-1.2cm] {$q_{-1}$};
        \pgftext[base,x=0cm,y=-2.4cm] {$\ $};
\end{tikzpicture}
\qquad\qquad
\begin{tikzpicture}
  [grow cyclic,
   level 1/.style={level distance=12mm,sibling angle=65},
   level 2/.style={level distance=6mm,sibling angle=40},
   level 3/.style={level distance=4mm,sibling angle=25},
   level 4/.style={level distance=2mm,sibling angle=20},
   level 5/.style={level distance=1mm,sibling angle=10} ]
  \coordinate [rotate=-90] 
    child foreach \x in {1,2,3}
      {child foreach \x in {1,2,3}
        {child foreach \x in {1,2,3}
        {child foreach \x in {1,2,3}
        {child foreach \x in {1,2,3}}}}};
         \draw (-1.5,0) -- (0,0);
         \pgftext[base,x=-1.5cm,y=-0.5cm] {$o$};
      \pgftext[base,x=0cm,y=-0.5cm] {$q$};
       \pgftext[base,x=0.3cm,y=1cm] {$q_1$};
       \pgftext[base,x=1cm,y=-0.5cm] {$q_0$};
       \pgftext[base,x=0.2cm,y=-1.2cm] {$q_{-1}$};
\end{tikzpicture}

\bigskip

{\bf Figure 1.} The tree $IT_{m}$ in $\R^{2}$ for $m=1,2,3,4,5$, respectively.


\begin{remark}
Remark that for the infinite tree $IT$ in $\R^{n}$, there are $2n-1$ branches that ramify from each node. The maximum depth level in the tree is given by $m$ and the branches and nodes at a given depth level $i$ are specified by $j_{i}$. Figure 1 shows the growth of the tree IT in the case $\R^{2}$, namely a tree with three branches that ramify from each node at each level. 
\end{remark}

Next, we will construct a closed, convex ball $H$ in $\R^{n}$ with $C^{1}$-boundary as follows:
\begin{itemize}
\item take a point $\hat q$ on the straight line $\gamma$ such that $d(0,\hat q)=\frac{r_{-1}}{\cos \phi}$, where $d$ is the usual Euclidean distance;
\item consider the right circular cone $C$ with vertex $\hat q$, axis $\gamma$ and vertex angle $\pi-2\phi$; 
\item consider the $(n-1)$-spheres $S_{0}=S(q,r_{0})$ and $S=S(o,r_{-1})$ in $\R^{n}$ of center $q$ and $o$, and radii $r_{0}$ and $r_{-1}$, respectively.
\end{itemize}

Then, it can be verified by simple trigonometric computations that the right circular cone $C$ is exterior tangent to the spheres $S_{0}$ and $S$ (see Figure 2). The intersection of $C$ with the spheres $S_{0}$ and $S$ is made of the $(n-2)$-spheres $c$ and $c'$, respectively (in the case $IT\subset \R^{3}$ these are circles).  

\bigskip


\begin{tikzpicture}[scale=0.6]
\draw (0,0) circle (7cm);
\draw (4,0) circle (4.5cm);
\draw (-7,0) -- (13,0);
\filldraw (0,0) circle (2pt);
\filldraw (4,0) circle (2pt);
\draw (11.25,0) -- (0,9);
\draw (11.25,0) -- (0,-9);
\draw (0,0) -- (4.45,5.45);
\draw (6.95,3.45) -- (4,0);
\draw (0,0) -- (4.45,-5.45);
\draw (6.95,-3.45) -- (4,0);
\draw (0,-0.35) node [anchor=east] {$o$};
\draw (4,-0.35) node [anchor=east] {$q$};
\draw (11.7,-0.35) node [anchor=east] {$\hat q$};
\draw [ultra thick] (4,0) -- (5.5,0);
\filldraw (5.5,0) circle (2pt);
\draw (5.85,0.2) node [anchor=north] {$q_{0}$};
\draw [ultra thick] (4,0) -- (5.3,0.5);
\filldraw (5.3,0.5) circle (2pt);
\draw (5.3,-0.5) node [anchor=north] {$q_{-1}$};
\draw [ultra thick] (4,0) -- (5.3,-0.5);
\filldraw (5.3,-0.5) circle (2pt);
\draw (5.7,1) node [anchor=north] {$q_{1}$};
\draw [ultra thick] (0,0) -- (4,0.0);
\draw (2,0) node [anchor=south] {$l_{0}$};
\draw (2.65,2) node [anchor=south] {$r_{-1}$};
\draw (4.7,1) node [anchor=south] {$r_{0}$};
\draw (-1,7) node [anchor=south] {$S$};
\draw (1,3.5) node [anchor=south] {$S_{0}$};
\draw (12,0) node [anchor=south] {$\gamma,\gamma_{0}$};
\draw (6,4.2) node [anchor=south] {$t_{0}$};
\draw (10,1.5) node [anchor=south] {$r_{0}\tan \phi$};
\draw (11,0.2) arc (90:180:0.2);
\draw (9.8,0) node [anchor=south] {$\frac{\pi}{2}-\phi$};
\draw (4.6,5.4) node [anchor=south] {$\hat c'$};
\draw (7,3.6) node [anchor=south] {$\hat c$};
\end{tikzpicture}
\bigskip

{\bf Figure 2.} A longitudinal section in the cone $C$ in the case $IT_{1}\subset \R^{3}$. 

\bigskip

Remark that the $(n-2)$-spheres $c$ and $c'$ cut out:
\begin{itemize}
\item a truncated cone $A\subset C$, from the cone $C$;
\item a small spherical cap $P_{0}$ with boundary $c\cap S_{0}$ from the sphere $S_{0}$;
\item a large spherical cap $P$ with boundary $c'\cap S$ from the sphere $S$.
\end{itemize}

Gluing at the both ends of the truncated cone $A_{0}$ the spherical caps mentioned above we obtain a closed, convex ball $H_{0}\subset \R^{n}$ whose boundary is a $C^{1}$-hypersurface $\partial H_{0}\subset \R^{n}$ (see Figure 3).


\bigskip

\begin{tikzpicture}[scale=0.6]
\draw (0,0) circle (7cm);
\draw (4,0) circle (4.5cm);
\filldraw (0,0) circle (2pt);
\filldraw (4,0) circle (2pt);
\draw (0,0) -- (4.45,5.45);
\draw (4.45,5.45) -- (6.95,3.45);
\draw (4.45,-5.45) -- (6.95,-3.45);
\draw (6.95,3.45) -- (4,0);
\draw (0,0) -- (4.45,-5.45);
\draw (6.95,-3.45) -- (4,0);
\draw (0,-0.35) node [anchor=east] {$o$};
\draw (4.5,-0.35) node [anchor=east] {$q$};
\draw [ultra thick] (0,0) -- (4,0.0);
\draw (-1,7) node [anchor=south] {$S$};

\draw (1.9,4) node [anchor=south] {$S_{0}$};

\draw (4.6,5.4) node [anchor=south] {$c'$};
\draw (7,3.6) node [anchor=south] {$c$};
\draw [thick,densely dashed] (4.5,0) ellipse (1 and 5.3);
\draw [thick, densely dashed] (7,0) ellipse (0.5 and 3.35);
\draw [gray,ultra thin] (1,0) -- (5.1,4.9);
\draw [gray,ultra thin] (1,0) -- (5.1,-4.9);
\draw [gray,ultra thin] (2,0) -- (5.85,4.35);
\draw [gray,ultra thin] (2,0) -- (5.85,-4.35);
\draw [gray,ultra thin] (3,0) -- (6.4,3.9);
\draw [gray,ultra thin] (3,0) -- (6.4,-3.9);
\draw (1.4,-0.35) node [anchor=east] {$\hat u$};
\draw [thick, densely dotted] (0,0) -- (0,7);
\draw [thick, densely dotted] (0,0) -- (0,-7);
\draw [thick, densely dotted] (0,0) -- (-7,0);
\draw [thick, densely dotted] (0,0) -- (-4.9,4.9);
\draw [thick, densely dotted] (0,0) -- (-4.9,-4.9);
\draw [thick, dotted] (4,0) -- (7.7,2.7);
\draw [thick, dotted] (4,0) -- (7.7,-2.7);
\draw [thick, dotted] (4,0) -- (8.2,1.5);
\draw [thick, dotted] (4,0) -- (8.2,-1.5);
\draw [thick, dotted] (4,0) -- (8.5,0);
\draw [thick,densely dashed] (0,0) ellipse (1.3 and 7);
\draw (6,4.2) node [anchor=south] {$A$};
\end{tikzpicture}
\bigskip

{\bf Figure 3.} The segment $s=oq$ is the inward cut locus of the $C^{1}$-hypersurface $\partial H_{0}$ in the case $IT_{0}\subset \R^{3}$.

\bigskip


We remark that the (inward) cut locus of $H_{0}$ endowed with the induced Euclidean norm from $\R^{n}$ is exactly the segment $s$. Indeed, the inward geodesic rays from $\partial H_{0}$ concentrates at a point $\hat u\in s$. 
The same length inward geodesic rays orthogonal to the cloth of the truncated cone $A_{0}$, emanating from an arbitrary point of the $(n-2)$-sphere 
$$c_{u}:=\{x\in A|d(x,\hat q)=(d(o,\hat q)-u)\sin\phi\},$$
 concentrate at a point $\hat u:=(u,0,\dots,0)\in s\setminus \{o,q\}$, the inward geodesic rays of same length from the small spherical cap of $S_{0}$ concentrate at $q$ and similarly the geodesic rays from the large spherical cap of $S$ concentrate at $o$.

Therefore, given a segment $s=oq\subset IT$ we obtain a 
$C^{1}$-hypersurface $\partial H_{0}\in \R^{n}$ whose (inward) cut locus and conjugate locus
is exactly the segment $s$.

\bigskip

Let us see how this construction looks like at next step. Consider $m=1$ and therefore our tree becomes $IT_{1}=\{s,s_{j_{1}}|j_{1}\in\{-(n-1),\dots,0,1,\dots, n-1\}\}$. The construction reads now:
\begin{itemize}
\item take a set of points $\hat q_{j_{1}}$ on the straight line $\gamma_{j_{1}}$ such that 
$d(q,\hat q_{j_{1}})=\frac{r_{0}}{\cos \frac{\phi}{ 3}}$;
\item consider the right circular cones $C_{j_{1}}$ with vertices $\hat q_{j_{1}}$, axes 
$\gamma_{j_{1}}$ and vertex angles $\pi-2\frac{\phi}{ 3}$; 
\item consider the $(n-1)$-spheres $S_{j_{1}}=S(q_{j_{1}},r_{1})$ and $S_{0}=S(q,r_{0})$ in $\R^{n}$.
\end{itemize}

The right circular cones $C_{j_{1}}$ are exterior tangent to the spheres $S_{j_{1}}$ and $S_{0}$. The intersection of $C_{j_{1}}$ with the spheres $S_{j_{1}}$ and $S_{0}$ is made of $2n-1$ spheres $c_{j_{1}}:=C_{j_{1}}\cap S_{j_{1}}$ and $c^{0}_{j_{1}}=:C_{j_{1}}\cap S_0$, respectively, that cut off
\begin{itemize}
\item $2n-1$ truncated cones $A_{j_{1}}\subset C_{j_{1}}$, from the cone $C_{j_{1}}$;
\item $2n-1$ small spherical caps, from the spheres $S_{j_{1}}$, whose boundaries are $C_{j_{1}}\cap S_{j_{1}}$;
\item $2n-1$ large spherical caps, from the sphere $S_{0}$, whose boundaries are $C_{j_{1}}\cap S_{0}$.
\end{itemize}

Consider now the small spherical cap on $S_{0}$ cuted off by the cone $C$ on which the $2n-1$ truncated cones $A_{j_{1}}$ sit. We denote by $B_{0}\subset S_{0}$ the region left from this small spherical cap after cutting off the new appeared small spherical cups $\bigcup_{j_{1}}(C_{j_{1}}\cap S_{0})$.

Then
\begin{equation}
\partial  H_{1}:=\bigcup_{j_{1}}(A_{j_{1}}\cup B_{j_{1}})\cup P
\end{equation}
is the convex $C^{1}$-hypersurface  and define $H_{1}$ to be the closed, convex ball whose boundary is $\partial H_{1}$. Obviously $H_{1}$ contains $IT_{1}$ and the inner cut locus of $\partial H_{1}$ is precisely
$IT_{1}$.
 


This construction can be repeated for each segment $s_{j_{1}j_{2}\cdots j_{m}}\subset IT$ obtaining in this way a closed, convex ball  $H_{m}\subset \R^{n}$, which contains $IT_{m}$, with  $C^{1}$-boundary $\partial H_{m}$ whose (inward) cut locus coincides with the  tree $IT_{m}$. 

Indeed, we construct as follows:
\begin{itemize}
\item take a point $\hat q_{j_{1}j_{2}\cdots j_{m}}$ on the straightline $\gamma_{j_{1}j_{2}\cdots j_{m}}$ such that 
\begin{equation}
d(q_{j_{1}j_{2}\cdots j_{m-1}},\hat q_{j_{1}j_{2}\cdots j_{m}})=\frac{r_{m-1}}{\cos (\frac{\phi}{3^{m}})};
\end{equation}
\item consider the right circular cone $C_{j_{1}j_{2}\cdots j_{m}}$ with vertex $\hat q_{j_{1}j_{2}\cdots j_{m}}$, axis $\gamma_{j_{1}j_{2}\cdots j_{m}}$ and vertex angle $\pi-2\frac{\phi}{3^{m}}$;
\item denote the $(n-1)$-sphere $S_{j_{1}j_{2}\cdots j_{m}}:=S(q_{j_{1}j_{2}\cdots j_{m}},r_{m})$ and consider the spheres $S_{j_{1}j_{2}\cdots j_{m}}$ and $S_{j_{1}j_{2}\cdots j_{m-1}}$ with centers at the ends of the segment $s_{j_{1}j_{2}\cdots j_{m}}$. 
\end{itemize}

It follows again that the right circular cone $C_{j_{1}j_{2}\cdots j_{m}}$ is exterior tangent to the spheres $S_{j_{1}j_{2}\cdots j_{m-1}}$ and $S_{j_{1}j_{2}\cdots j_{m}}$. The intersection of $C_{j_{1}j_{2}\cdots j_{m}}$ with the spheres $S_{j_{1}j_{2}\cdots j_{m-1}}$ and $S_{j_{1}j_{2}\cdots j_{m}}$ is made of the $(n-2)$-spheres $c_{j_{1}j_{2}\cdots j_{m-1}}$ and $c_{j_{1}j_{2}\cdots j_{m}}$, respectively.

Similarly as above, the $(n-2)$-spheres $c_{j_{1}j_{2}\cdots j_{m-1}}$ and $c_{j_{1}j_{2}\cdots j_{m}}$ cut out:
\begin{itemize}
\item a truncated cone $A_{j_{1}j_{2}\cdots j_{m}}\subset C_{j_{1}j_{2}\cdots j_{m}}$;
\item a small spherical cap of $S_{j_{1}j_{2}\cdots j_{m-1}}$ whose boundary is $c_{j_{1}j_{2}\cdots j_{m-1}}\cap S_{j_{1}j_{2}\cdots j_{m-1}}$;
\item a large spherical cap $S_{j_{1}j_{2}\cdots j_{m}}$ whose boundary is $c_{j_{1}j_{2}\cdots j_{m}}\cap S_{j_{1}j_{2}\cdots j_{m}}$.
\end{itemize}

Consider now the small spherical cap on $S_{j_{1}j_{2}\cdots j_{m-1}}$ cuted off by the cone $C_{j_{1}j_{2}\cdots j_{m}}$ on which the $2n-1$ truncated cones $A_{j_{1}j_{2}\cdots j_{m}}$ sit. We denote by $B_{j_{1}j_{2}\cdots j_{m}}\subset S_{j_{1}j_{2}\cdots j_{m-1}}$ the region left from this small spherical cap after cutting off the new appeared small spherical cups
 $\cup_{j_{1}j_{2}\cdots j_{m}}(C_{j_{1}j_{2}\cdots j_{m}}\cap S_{j_{1}j_{2}\cdots j_{m}})$.

Then
\begin{equation}
\partial H_{m}:=\cup_{i=0}^{m}\cup_{j_{1}j_{2}\cdots j_{m}}(A_{j_{1}j_{2}\cdots j_{m}}\cup B_{j_{1}j_{2}\cdots j_{m}})\cup P
\end{equation}
is the convex $C^{1}$-hypersurface that defines the closed, convex ball $H_{m}$ in $\R^{n}$. By a similar argument as above one can see that 
$\cup_{i=0}^{m}\cup_{j_{1}j_{2}\cdots j_{m}}s_{j_{1}j_{2}\cdots j_{m}}$ is the inward cut locus of $\partial H_{m}$. One can remark that in our construction the conjugate locus coincides with the focal locus.

 Indeed, the equal length inward geodesic rays orthogonal to the cloth of the truncated cone $A_{j_{1}j_{2}\cdots j_{m}}$, emanating from an arbitrary point of the $(n-2)$-sphere 
$\{x\in A_{j_{1}j_{2}\cdots j_{m}}|d(x,\hat q_{j_{1}j_{2}\cdots j_{m}})=constant\}$,
 concentrate at a point on the open segment $s_{j_{1}j_{2}\cdots j_{m}}$.  The inward geodesic rays of same length orthogonal to the region $B_{j_{1}j_{2}\cdots j_{m}}\subset S_{j_{1}j_{2}\cdots j_{m}}$
 concentrate at $q_{j_{1}j_{2}\cdots j_{m}}$ and similarly the geodesic rays from the large spherical cap of $S$ concentrate at $o$.
 
Finally, we define
\begin{equation}
H:=\lim_{m\to\infty}H_{m}
\end{equation}
that have the required properties. 

Therefore we obtain

\begin{proposition}
Let $IT$ be the infinite tree in $\R^{n}$ and $H$ the closed, convex ball with $C^{1}$-boundary  constructed above. Then  $H\subset \R^{n}$ contains $IT$ and  $E\subset \partial H$ by construction. 
\end{proposition}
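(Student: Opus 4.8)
The plan is to obtain the proposition by passing to the limit in the finite-stage construction carried out above. First I would check that $H=\lim_{m\to\infty}H_{m}$ is a well-defined closed convex body: passing from $H_{m}$ to $H_{m+1}$ only replaces certain small spherical caps of $\partial H_{m}$ by outward bumps (a truncated cone $A_{j_{1}\cdots j_{m+1}}$ capped by a small spherical cap of $S_{j_{1}\cdots j_{m+1}}$), so that $H_{0}\subset H_{1}\subset H_{2}\subset\cdots$, and all the $H_{m}$ lie in the fixed ball $\overline{B(o,L)}$ with $L=\sum_{i\ge0}l_{i}<\infty$. Hence $H=\overline{\bigcup_{m}H_{m}}$ is a bounded, closed, convex set with nonempty interior, equal to the Hausdorff limit of the $H_{m}$. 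The $C^{1}$-regularity of $\partial H$, which is already part of the hypothesis, I would recover by remarking that away from the end-point set $E$ the hypersurface $\partial H$ locally coincides with some $\partial H_{m}$, while near a point of $E$ the tangent planes of the shrinking truncated cones accumulating there converge: their half-angles tend to $\pi/2$ and the axis directions $\gamma_{j_{1}\cdots j_{m}}$ form a Cauchy sequence, since the total turning along any branch is bounded by $\sum_{i\ge1}\phi/3^{i}=\phi/2$.

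Next I would establish the two inclusions. For $IT\subset H$: the construction shows that each finite tree $IT_{m}$ lies in $H_{m}$ (indeed $IT_{m}$ is the inward cut locus of $\partial H_{m}$); since $H_{m}\subset H$ and $H$ is closed, $IT=\overline{\bigcup_{m}IT_{m}}\subset H$. For $E\subset\partial H$: because $E\subset IT$ by the definition of $E$ as the set of end points of $IT$, the previous inclusion gives $E\subset H$, so it remains to exclude $E\cap\operatorname{int}H$. Given $e\in E$, choose an infinite branch $j_{1},j_{2},\dots$ with $q_{j_{1}\cdots j_{m}}\to e$. Each truncated cone $A_{j_{1}\cdots j_{m}}$ belongs to $\partial H$, since once created at stage $m$ it is never modified — at every later stage only small spherical caps of the boundary are replaced — so $A_{j_{1}\cdots j_{m}}\subset\partial H_{m'}$ for all $m'\ge m$, hence $A_{j_{1}\cdots j_{m}}\subset\partial H$. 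Moreover every point of $A_{j_{1}\cdots j_{m}}$ lies between the spheres $S(q_{j_{1}\cdots j_{m-1}},r_{m-1})$ and $S(q_{j_{1}\cdots j_{m}},r_{m})$, hence within distance $\max\{r_{m-1},\,l_{m}+r_{m}\}$ of $q_{j_{1}\cdots j_{m-1}}$; since $q_{j_{1}\cdots j_{m-1}}\to e$ and $r_{m-1},l_{m},r_{m}\to0$, this forces $\dist(e,\partial H)=0$, and as $\partial H$ is closed we conclude $e\in\partial H$.

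The step I expect to be the real obstacle is the geometric bookkeeping behind the nestedness used above — verifying from the explicit radii and vertex angles that the bump introduced at stage $m+1$ genuinely enlarges the body and stays strictly ``below'' the previously built lateral surfaces, so that the truncated cones (and the annular caps $B_{j_{1}\cdots j_{m}}$) are never engulfed and do persist in $\partial H$. Everything else is routine: the relevant trigonometric identities were already recorded when the $H_{m}$ were constructed, and the limit is a soft argument about closed sets and increasing unions of convex bodies. If one also wishes to re-prove, rather than assume, the $C^{1}$-regularity of $\partial H$ at the fractal set $E$, the estimate controlling the tangent planes of the infinitely many cone pieces meeting at an end point is the other place where some care is needed.
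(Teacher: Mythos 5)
Your argument is correct and follows the same route as the paper, which in fact offers no separate proof beyond the phrase ``by construction'': the inclusions follow from the nested exhaustion $H_{0}\subset H_{1}\subset\cdots$ inside the ball of radius $L$, the fact that $IT_{m}\subset H_{m}$, and the persistence of the truncated cones in $\partial H$ forcing the accumulation points $E$ onto the boundary. Your write-up simply makes explicit the monotonicity and convergence bookkeeping that the paper leaves implicit, and it is sound.
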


\section{The Haussdorf measure of $E$}\label{sec:3}

In this section we will investigate the Hausdorff dimension of the set $E$. The idea is to construct a set of points $\hat E$ whose Hausdorff dimension $\dim_{\mathcal H}\hat E$ can be easily computed  and such that $\dim_{\mathcal H} E=\dim_{\mathcal H}\hat E$.

We start by defining an infinite series of numbers $(\alpha_{i})$, $i\in\{0,1, \dots\}$ 
by
\begin{equation}
\alpha_{0}=\frac{3^{k-2}}{3^{k-2}-1},\quad \alpha_{i+1}=\frac{1}{3^{k-1}}\alpha_{i},
\end{equation}
where $t_{i}$ is defined in \eqref{def t, l, r}.

Remark that, for any $i$, we have
\begin{equation}
\alpha_{i}=3\alpha_{i+1}+t_{i}.
\end{equation}

In $\R^{n-1}$ we consider 
\begin{itemize}
\item the concentric $(n-2)$-balls $\hat C=B(o,\alpha_{0})$ and $\hat S=B(o,\alpha_{0}-t_{0})$ with the center $o$ at the origin of $\R^{n-1}$ and radii $\alpha_{0}$ and $\alpha_{0}-t_{0}$, respectively; 
\item $\hat A:=\hat C\setminus \hat S$ the annulus obtained by removing $\hat S$ from $\hat C$.
\end{itemize}

We will construct a set of points $y_{j_{1}j_{2}\cdots j_{m}}\in \hat S$, where $m\in \{1,2,\dots\}$ and $j_{m}\in \{-(n-1),...,-1,0,1,...,n-1\}$ are as in Section 2. 

For $m=1$ we consider the points 
\begin{equation}
(y_{j_{1}})=(y_{-(n-1)},y_{-(n-2)},\dots,y_{-2},y_{-1},y_{0},y_{1},y_{2},\dots,y_{n-1})\in \R^{n-1}, 
\end{equation}
where 
\begin{equation}\label{y_{j_{1}}}
\begin{split}
&(y_{1},y_{2},\dots,y_{n-1})^{t}=2\alpha_{1}I_{n-1},\quad y_{0}=o,\\
& (y_{-(n-1)},y_{-(n-2)},\dots,y_{-2},y_{-1})^{t}=-2\alpha_{1}I_{n-1},
\end{split}
\end{equation}
where $^{t}$ represents the transposed of a vector, and $I_{n-1}$ the identity matrix.

We can construct iteratively, for arbitrary $m$, the set of points $y_{j_{1}j_{2}\cdots j_{m}}\in \R^{n-1}$ defined by
\begin{equation}
y_{j_{1}j_{2}\cdots j_{m}}=(\sum_{i=1}^{m}2b_{i}^{1}\alpha_{i},\sum_{i=1}^{m}2b_{i}^{2}\alpha_{i},\dots, \sum_{i=1}^{m}2b_{i}^{n-1}\alpha_{i}),
\end{equation}
where $(b_{i}^{j})$ is an $m\times (n-1)$-real matrix defined by
\begin{equation}
b_{i}^{j}=
\begin{cases}
& \ \ 1,\quad \textrm{if} \ j_{i}=j\\
& -1 ,\quad \textrm{if} \ j_{i}=-j\\
& \ \ 0,\quad \textrm{otherwise} ,
\end{cases}
\end{equation}
for all $i\in\{1,2,\dots,m\}$ and $j\in \{1,2,\dots,n-1\}$. One can easily see that for $m=1$ we get \eqref{y_{j_{1}}}.


\bigskip
\begin{tikzpicture}[scale=0.4]
\draw (0,0) circle (5.5cm);
\draw (0,0) circle (4.5cm);
\filldraw (0,0) circle (1pt);
\draw (0,0) node [anchor=north] {$o$};
\draw (-4.5,4.5) node [anchor=north] {$c'$};
\draw (-3.2,3.2) node [anchor=north] {$c$};
\draw (2,2) node [anchor=north] {$P_{0}$};
\end{tikzpicture}
%
\begin{tikzpicture}[scale=0.4]
\draw (0,0) circle (5.5cm);
\draw (0,0) circle (4.5cm);
\filldraw (0,0) circle (1pt);
\draw (0,0) node [anchor=north] {$o$};
\draw (0,0) circle (1.5cm);
\draw (3,0) circle (1.5cm);
\draw (-3,0) circle (1.5cm);
\draw (0,3) circle (1.5cm);
\draw (0,-3) circle (1.5cm);
\draw (0,0) circle (1cm);
\draw (3,0) circle (1cm);
\draw (-3,0) circle (1cm);
\draw (0,-3) circle (1cm);
\draw (0,3) circle (1cm);
\filldraw (0,3) circle (1pt);
\filldraw (3,0) circle (1pt);
\filldraw (-3,0) circle (1pt);
\filldraw (0,-3) circle (1pt);
\draw (2.5,2.7) node [anchor=north] {$B_{0}$};
\draw (4,3.8) node [anchor=north] {$A_{0}$};
\end{tikzpicture}
%
\begin{tikzpicture}[scale=0.4]
\draw (0,0) circle (5.5cm);
\draw (0,0) circle (4.5cm);
\filldraw (0,0) circle (1pt);
\draw (0,0) circle (1.5cm);
\draw (3,0) circle (1.5cm);
\draw (-3,0) circle (1.5cm);
\draw (0,3) circle (1.5cm);
\draw (0,-3) circle (1.5cm);
\draw (0,0) circle (1cm);
\draw (3,0) circle (1cm);
\draw (-3,0) circle (1cm);
\draw (0,-3) circle (1cm);
\draw (0,3) circle (1cm);
\filldraw (0,3) circle (1pt);
\filldraw (3,0) circle (1pt);
\filldraw (-3,0) circle (1pt);
\filldraw (0,-3) circle (1pt);

\filldraw (0.66,3) circle (1pt);
\filldraw (0,3.66) circle (1pt);
\filldraw (0,2.33) circle (1pt);
\filldraw (-0.66,3) circle (1pt);
\draw (0,3) circle (0.33cm);
\draw (0,3.66) circle (0.33cm);
\draw (0,2.33) circle (0.33cm);
\draw (-0.66,3) circle (0.33cm);
\draw (0.66,3) circle (0.33cm);

\filldraw (0,0.66) circle (1pt);
\filldraw (-0.66,0) circle (1pt);
\filldraw (0,-0.66) circle (1pt);
\filldraw (0.66,0) circle (1pt);
\draw (0,0) circle (0.33cm);
\draw (0,0.66) circle (0.33cm);
\draw (0,-0.66) circle (0.33cm);
\draw (-0.66,0) circle (0.33cm);
\draw (0.66,0) circle (0.33cm);

\filldraw (-3,0.66) circle (1pt);
\filldraw (-3.66,0) circle (1pt);
\filldraw (-3,-0.66) circle (1pt);
\filldraw (-2.33,0) circle (1pt);
\draw (-3,0) circle (0.33cm);
\draw (-3,0.66) circle (0.33cm);
\draw (-3,-0.66) circle (0.33cm);
\draw (-3.66,0) circle (0.33cm);
\draw (-2.33,0) circle (0.33cm);

\filldraw (3,0.66) circle (1pt);
\filldraw (2.33,0) circle (1pt);
\filldraw (3,-0.66) circle (1pt);
\filldraw (3.66,0) circle (1pt);
\draw (3,0) circle (0.33cm);
\draw (3,0.66) circle (0.33cm);
\draw (3,-0.66) circle (0.33cm);
\draw (2.33,0) circle (0.33cm);
\draw (3.66,0) circle (0.33cm);

\filldraw (0,-2.33) circle (1pt);
\filldraw (-0.66,-3) circle (1pt);
\filldraw (0,-3.66) circle (1pt);
\filldraw (0.66,-3) circle (1pt);
\draw (0,-3) circle (0.33cm);
\draw (0,-2.33) circle (0.33cm);
\draw (0,-3.66) circle (0.33cm);
\draw (-0.66,-3) circle (0.33cm);
\draw (0.66,-3) circle (0.33cm);

\end{tikzpicture}
\bigskip

{\bf Figure 4.} Mandalas seen from a far point on the ray $\gamma$ in the cases $IT_{0}=s$ (left), 
$IT_{1}=\{s,s_{j_{1}}|j_{1}\in\{-2,-1,0,1,2\}\}$ (middle) and \\
$IT_{2}=\{s,s_{j_{1}},s_{j_{1}j_{2}}|j_{i}\in\{-2,-1,0,1,2\}, i=1,2\}$ (right).

\bigskip


For each $m$ we consider
\begin{itemize}
\item the concentric $(n-2)$-balls $\hat C_{j_{1}j_{2}\cdots j_{m}}=B(o,\alpha_{m})$ and $\hat S_{j_{1}j_{2}\cdots j_{m}}=B(o,\alpha_{m}-t_{m})$;
\item $\hat A_{j_{1}j_{2}\cdots j_{m}}:=\hat C_{j_{1}j_{2}\cdots j_{m}}\setminus \hat S_{j_{1}j_{2}\cdots j_{m}}$ the annulus obtained by removing $\hat S_{j_{1}j_{2}\cdots j_{m}}$ from $\hat C_{j_{1}j_{2}\cdots j_{m}}$  (see Figure 4, 5 for the case $n=3$).
\end{itemize}

\bigskip
\begin{tikzpicture}[scale=1.3]
\draw (0,0) circle (5.5cm);
\draw (0,0) circle (4.5cm);
\draw (0,0) circle (1.5cm);
\draw (3,0) circle (1.5cm);
\draw (-3,0) circle (1.5cm);
\draw (0,3) circle (1.5cm);
\draw (0,-3) circle (1.5cm);
\draw (0,0) circle (1cm);
\draw (3,0) circle (1cm);
\draw (0,3) circle (0.33cm);
\draw (0,3.66) circle (0.33cm);
\draw (0,2.33) circle (0.33cm);
\draw (-0.66,3) circle (0.33cm);
\draw (0.66,3) circle (0.33cm);
\filldraw (0,3.66) circle (1pt);
\filldraw (-0.66,3) circle (1pt);
\filldraw (0,2.33) circle (1pt);
\filldraw (0.66,3) circle (1pt);
\draw (0,3) circle (1cm);
\draw (0,-3) circle (1cm);
\filldraw (0,0) circle (1pt);
\filldraw (0,3) circle (1pt);
\filldraw (-3,0) circle (1pt);
\filldraw (0,-3) circle (1pt);
\filldraw (3,0) circle (1pt);
\draw (0,0) node [anchor=north] {$o$};
\draw (3,0) node [anchor=north] {$y_{1}$};
\draw (-3,0) node [anchor=north] {$y_{-1}$};
\draw (0,-3) node [anchor=north] {$y_{-2}$};
\draw (0,3.1) node [anchor=north] {$y_{2}$};
\draw (0.7,3.1) node [anchor=north] {$y_{21}$};
\draw (0.4,3.8) node [anchor=north] {$y_{22}$};
\draw (2.2,-2) node [anchor=north] {$\hat S$};
\draw (0.3,1) node [anchor=north] {$\hat S_{0}$};
\draw (3,1) node [anchor=north] {$\hat S_{1}$};
\draw (0.3,1) node [anchor=north] {$\hat S_{0}$};
\draw (0.6,-2.3) node [anchor=north] {$\hat S_{-2}$};
\draw (0.8,-0.5) node [anchor=north] {$\hat A_{0}$};
\draw (0.9,-3.5) node [anchor=north] {$\hat A_{-2}$};
\draw (3.8,-0.5) node [anchor=north] {$\hat A_{1}$};
\draw (-3,-0.5) node [anchor=north] {$\hat C_{-1}$};
\draw (2.2,2.4) node [anchor=north] {$B$};
\draw (0.6,2.7) node [anchor=north] {$B_{2}$};
\draw (1,2.5) node [anchor=north] {$A_{2}$};
\draw (4.2,-2) node [anchor=north] {$\hat A$};
\draw (3.19,3.19)--(3.9,3.9);
\draw (3.7,3.6) node [anchor=north] {$t_{0}$};
\end{tikzpicture}
\bigskip

{\bf Figure 5.} A combined mandala seen from a far point on the ray $\gamma$.

\bigskip


Let us denote the set of all points $y$ by $Y:=\cup_{m=1}^{\infty}{y_{j_{1}j_{2}\cdots j_{m}}}$, and let $\hat E$ be the set of accumulation points of $Y$. Since $Y\subset \hat S$ the set of points $\hat E\subset \hat S$ exists and it is not empty.

Let $\hat R_{1}:\R^{n-1}\to\R^{n-1}$ be the contraction function that maps $\hat S$ into $\hat S_{1}$. It can be seen that $\hat R_{1}$ is a {\it similarity map}, i.e. there exists a constant $c_{1}\in (0,1)$ such that
\begin{equation}
d(\hat R_{1}(x),\hat R_{1}(y))=c_{1}d(x,y), 
\end{equation}
for any $x,y\in \R^{n-1}$. The constant $c_{1}$ is called the {\it ratio} of  $\hat R_{1}$ (see \cite{Fal}, p. 128). Indeed, taking into account our definitions, one obtains $c_{1}=\dfrac{1}{3^{k-1}}$.

Iteratively, for a given $m$, we define the mapping
\begin{equation}\label{R_{j}}
\hat R_{j}:\R^{n-1}\to\R^{n-1},\quad \hat S_{j_{1}j_{2}\cdots j_{m}}\mapsto \
\hat S_{j_{1}j_{2}\cdots j_{m}}j,
\end{equation}
for all $j\in \{-(n-1),...,-1,0,1,...,n-1\}$. Obviously $\hat R_{j}$ is a similarity map with ratio $c_{j}=\dfrac{1}{3^{k-1}}$. Thus each $\hat R_{j}$ transform subsets of $\R^{n-1}$ into geometrically similar sets.

The attractor of such a collection of similarity maps values domains, in our case $\hat E$, is a {\it self-similar set}, being a union of smaller similar copies of itself.

We recall from \cite{Fal} that the mappings $\hat R_{j}$ satisfy the {\it open set condition} if there exists a non-empty bounded open set $V$ such that
\begin{equation}
V\supset \bigcup_{j=1}^{2n-1}\hat R_{j}(V).
\end{equation}

By taking $V:=\hat S$ one can see that the mappings $\hat R_{j}$ considered in \eqref{R_{j}} satisfy the open set condition. 

It follows from Theorem 9.3 in \cite{Fal} that the Hausdorff dimension $s:=\dim_{H}\hat E$ can be computed from the formula
\begin{equation}
\sum_{j=1}^{2n-1}(c_{j})^{s}=1,
\end{equation}
where $c_{j}$ are the ratios of the similarity maps $\hat R_{j}$. Therefore, in our case we have
\begin{equation}
\sum_{j=1}^{2n-1}\Bigl(\frac{1}{3^{k-1}}\Bigr)^{s}=1,
\end{equation}
and by an elementary computation we obtain
\begin{equation}
s=\frac{\log(2n-1)}{(k-1)\log 3}.
\end{equation}

Imposing now the condition $1<s<2$ we get 
\begin{equation}
\frac{3^{k-1}}{2}<n<\frac{3^{2(k-1)}+1}{2},
\end{equation}
where $n$ and $k$ are positive integers. A moment of thought convinces that 
\begin{equation}
n=\frac{3^{k-1}+3}{2}
\end{equation}
is a positive integer that satisfies this condition ($n$ is obviously integer because all powers of an odd number are odd and sum of two odd numbers is even). 

Therefore, we have
\begin{proposition}\label{prop 3.1}
Let $k\geq 2$ be a fixed integer and let $n:=\frac{3^{k-1}+3}{2}$. Then
\begin{equation}
\dim_{H}\hat E=\frac{\log(2n-1)}{(k-1)\log 3}\in (1,2),
\end{equation}
where $\hat E$ is the accumulation points set of $Y$ constructed as above.
\end{proposition}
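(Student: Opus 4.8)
The plan is to verify Proposition \ref{prop 3.1} essentially by assembling the pieces already set up in the excerpt and checking the arithmetic carefully. The statement has two parts: first, that $\dim_H \hat E = \frac{\log(2n-1)}{(k-1)\log 3}$ for the self-similar set $\hat E$, and second, that with the specific choice $n = \frac{3^{k-1}+3}{2}$ this number lies strictly between $1$ and $2$. The first part is already almost entirely done in the text: the $2n-1$ maps $\hat R_j$ are contracting similarities of ratio $c_j = 1/3^{k-1}$, their attractor is $\hat E$ (the accumulation set of $Y$), and they satisfy the open set condition with witness $V = \hat S$. So the first step is simply to invoke Theorem 9.3 of \cite{Fal}, which gives that the Hausdorff dimension $s$ is the unique solution of $\sum_{j=1}^{2n-1} c_j^s = 1$, i.e. $(2n-1)(3^{k-1})^{-s} = 1$, and solving for $s$ yields $s = \frac{\log(2n-1)}{(k-1)\log 3}$.

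The second step is to pin down exactly why the $\hat R_j$ really are similarities of the claimed ratio, why $\hat E$ really is their attractor, and why the open set condition holds — these are the points where the geometry of the "mandala" construction of Section 3 must be used rather than just quoted. I would argue that $\hat R_j$ maps $\R^{n-1}$ by $x \mapsto \tfrac{1}{3^{k-1}} x + v_j$ for the appropriate translation vector $v_j$ (read off from the defining formula $y_{j_1\cdots j_m} = (\sum 2 b_i^\ell \alpha_i)_\ell$ together with the recursion $\alpha_{i+1} = \tfrac{1}{3^{k-1}}\alpha_i$), so each is a similarity of ratio $1/3^{k-1}$; the scaling of radii $\alpha_m \mapsto \alpha_{m+1}$ and $\alpha_m - t_m \mapsto \alpha_{m+1} - t_{m+1}$ confirms $\hat C_{\cdots}, \hat S_{\cdots}$ transform consistently. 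That $\hat E$ is the attractor follows because $Y$ is exactly the orbit of the base points under finite compositions of the $\hat R_j$, so its accumulation set is the unique compact invariant set. For the open set condition one checks that the $2n-1$ images $\hat R_j(\hat S)$, being the balls $\hat S_{j}$ of radius $\alpha_1 - t_1$ centered at the points $y_j$ (at mutual distance $2\alpha_1$ along the coordinate axes and at $o$), are pairwise disjoint and all contained in $\hat S = B(o, \alpha_0 - t_0)$; this reduces to the two inequalities $2\alpha_1 \geq 2(\alpha_1 - t_1)$ (trivial) and $2\alpha_1 + (\alpha_1 - t_1) \leq \alpha_0 - t_0$, and the latter follows from the identity $\alpha_i = 3\alpha_{i+1} + t_i$ recorded in the text (giving $\alpha_0 - t_0 = 3\alpha_1 \geq 2\alpha_1 + \alpha_1 - t_1$).

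The third step is the purely arithmetic verification that $1 < s < 2$ for $n = \frac{3^{k-1}+3}{2}$. The condition $1 < s < 2$ is equivalent to $3^{k-1} < 2n-1 < 3^{2(k-1)}$, i.e. (adding $1$ and halving) $\frac{3^{k-1}+1}{2} \le n \le \frac{3^{2(k-1)}+1}{2}$ with strict inequalities as written in the text. One substitutes $2n - 1 = 3^{k-1} + 2$; then $3^{k-1} < 3^{k-1}+2$ is immediate, and $3^{k-1} + 2 < 3^{2(k-1)} = (3^{k-1})^2$ holds for every $k \geq 2$ since $3^{k-1} \geq 3$ forces $(3^{k-1})^2 \geq 3\cdot 3^{k-1} = 3^{k-1} + 2\cdot 3^{k-1} \geq 3^{k-1} + 6 > 3^{k-1}+2$. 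Finally one notes $n$ is a genuine integer: $3^{k-1}$ is odd, so $3^{k-1}+3$ is even. This gives $\dim_H \hat E = \frac{\log(3^{k-1}+2)}{(k-1)\log 3} \in (1,2)$, completing the proof.

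I expect no serious obstacle here — the proposition is essentially bookkeeping on top of a standard self-similar-set theorem. The only place requiring genuine care is the open set condition: one must make sure the chosen open set $V = \hat S$ genuinely contains all $2n-1$ first-level images with disjoint (or at least non-overlapping) interiors, which is exactly what the relation $\alpha_i = 3\alpha_{i+1} + t_i$ is engineered to guarantee. Everything else is either quoted from \cite{Fal} or elementary inequality manipulation.
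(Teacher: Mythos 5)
Your proposal is correct and takes essentially the same route as the paper: treat $\hat E$ as the attractor of the $2n-1$ similarities $\hat R_j$ of ratio $1/3^{k-1}$, check the open set condition with $V=\hat S$, invoke Theorem 9.3 of Falconer to get $s=\frac{\log(2n-1)}{(k-1)\log 3}$, and verify $3^{k-1}<2n-1<3^{2(k-1)}$ for $n=\frac{3^{k-1}+3}{2}$. The only difference is that you make explicit some details the paper leaves implicit (the affine form of the $\hat R_j$, the identification of $\hat E$ with the attractor, and the disjointness of the first-level images via $\alpha_i=3\alpha_{i+1}+t_i$), which supplements rather than alters the paper's argument.
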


Remark that the minimal admitted value for $n$ is $n=3$ for $k=2$. In this case we have 
$\dim_{H}\hat E=\frac{\log 5}{\log 3}=1.46497$. 

Let us point out that the sets $\hat S$ and $H=\cup_{m=0}^{\infty}\cup_{{j_{1}j_{2}\cdots j_{m}}}A_{{j_{1}j_{2}\cdots j_{m}}}\cup B_{{j_{1}j_{2}\cdots j_{m}}}$ are bi-Lipschitz, i.e. one can define a map $\Phi:\hat S\to H$ and there are positive constants $c$ and $C$ such that
\begin{equation}
c\cdot d(x,y)\leq d(\Phi(x),\Phi(y))\leq C\cdot d(x,y),
\end{equation}
for all $x,y\in \hat S$. Indeed, since $\phi$ has been chosen arbitrary, if we take a small $\phi<\varepsilon$, for any $\varepsilon>0$, then $\frac{\pi}{2}-\phi\to \frac{\pi}{2}$, i.e. the small spherical caps $S_{j_{1}j_{2}\cdots j_{m}}$ are almost included in the orthogonal planes to the rays $\gamma_{j_{1}j_{2}\cdots j_{m}}$. This means that $\hat S$ and $H$ are bi-Lipschitz.

On the other hand, it is known that the Hausdorff dimensions of two bi-Lipschitz sets coincide (see \cite{Fal} for example), and therefore we obtain
\begin{corollary}\label{cor 3.2}
Let $k\geq 2$ be a fixed integer and let $n:=\frac{3^{k-1}+3}{2}$. Then
\begin{equation}
\dim_{H} E=\frac{\log(2n-1)}{(k-1)\log 3}\in (1,2),
\end{equation}
where $E$ is the set of endpoints of $IT$ constructed in Section \ref{sec:2}.
\end{corollary}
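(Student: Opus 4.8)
The plan is to deduce Corollary \ref{cor 3.2} directly from Proposition \ref{prop 3.1} together with the bi-Lipschitz observation already recorded immediately before the statement. Since $\dim_{H}\hat E$ has been computed in Proposition \ref{prop 3.1}, it suffices to show $\dim_{H}E=\dim_{H}\hat E$, and for this I would invoke the standard fact (see \cite{Fal}) that bi-Lipschitz maps preserve Hausdorff dimension. So the real content is to produce an explicit bi-Lipschitz bijection $\Phi$ between $\hat E$ (the accumulation set of $Y\subset\hat S\subset\R^{n-1}$) and $E$ (the endpoint set of $IT\subset H\subset\R^{n}$), or more conveniently between the ambient sets $\hat S$ and $H$ in a way that carries $Y$ to $Q$ (hence $\hat E$ to $E$).

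First I would set up the correspondence combinatorially: both $Y$ and $Q$ are indexed by the same set of finite strings $j_{1}j_{2}\cdots j_{m}$ with $j_{i}\in\{-(n-1),\dots,n-1\}$, and in both constructions the passage from level $m-1$ to level $m$ is governed by the same contraction ratio, namely $t_{m}/t_{m-1}=1/3^{k-1}$ on the $\hat S$ side and (up to the factors $\sin(\phi/3^{m})$, $\cos(\phi/3^{m})$) the analogous ratio of the $l_{i}$'s and $r_{i}$'s on the $IT$ side. I would define $\Phi$ on $Q$ by $\Phi(q_{j_{1}\cdots j_{m}}):=y_{j_{1}\cdots j_{m}}$ (suitably rescaled and re-centered) and check that the map on each generation-$m$ piece is the composition of a similarity on the model side with a near-similarity on the geometric side. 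Then I would pass to the closure to get $\Phi\colon E\to\hat E$, noting that a limit of uniformly bi-Lipschitz maps on a dense set extends to a bi-Lipschitz map on the closure with the same constants.

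The key estimate — and the main obstacle — is controlling the distortion uniformly over all levels $m$ simultaneously. The factors $\cos(\phi/3^{m})\to 1$ and $\sin(\phi/3^{m})/( \phi/3^{m})\to 1$ are what make the $IT$-side maps fail to be exact similarities, and one must check that the product over all $m$ of the ratios $\cos(\phi/3^{m})$ and of the correction factors stays bounded away from $0$ and $\infty$; this is a convergent-product argument, and choosing $\phi<\varepsilon$ small (as the authors indicate) makes $\tfrac{\pi}{2}-\phi$ close to $\tfrac{\pi}{2}$, i.e. each small spherical cap $S_{j_{1}\cdots j_{m}}$ almost lies in the hyperplane orthogonal to $\gamma_{j_{1}\cdots j_{m}}$, so the geometric tree $IT$ is a near-orthogonal ``fattening'' of the flat model configuration in $\R^{n-1}$. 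The second delicate point is that $\Phi$ must be bi-Lipschitz on the \emph{whole} sets, not just preserve the within-branch geometry: one needs a lower bound on $d(q_{j_{1}\cdots j_{m}},q_{j_{1}'\cdots j_{m}'})$ when the two strings first differ at some coordinate, comparable to the corresponding separation $d(y_{j_{1}\cdots j_{m}},y_{j_{1}'\cdots j_{m}'})$ on the model side — this is exactly where the open set condition (the disjointness of the caps) on both sides is used, and it is why the balls $\hat S_{j_{1}\cdots j_{m}}$ of radius $\alpha_{m}-t_{m}$ were chosen with that particular radius.

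Once the bi-Lipschitz map is in hand, the conclusion is immediate: $\dim_{H}E=\dim_{H}\hat E=\frac{\log(2n-1)}{(k-1)\log 3}$ by Proposition \ref{prop 3.1}, and since $n=\frac{3^{k-1}+3}{2}$ this value lies in $(1,2)$, which is precisely the assertion of Corollary \ref{cor 3.2}. I would also remark that, strictly speaking, one does not even need $\Phi$ to be a bijection: it is enough to exhibit bi-Lipschitz maps in both directions between $E$ and $\hat E$ (or a single bi-Lipschitz embedding of one onto the other), since Hausdorff dimension is monotone under Lipschitz maps and a bi-Lipschitz image has the same dimension.
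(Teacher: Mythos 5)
Your proposal follows essentially the same route as the paper: the authors also deduce the corollary from Proposition \ref{prop 3.1} by asserting a bi-Lipschitz correspondence $\Phi$ between the model set containing $\hat E$ and the geometric construction containing $E$ (justified by taking $\phi$ small so the caps nearly lie in the hyperplanes orthogonal to the rays $\gamma_{j_1\cdots j_m}$), and then invoking the bi-Lipschitz invariance of Hausdorff dimension from \cite{Fal}. In fact your sketch is more explicit than the paper's brief argument, since you single out precisely the two estimates (uniform distortion over all levels and the inter-branch separation bound) that the paper leaves implicit.
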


We point out that the dimension $n(k)$ increases exponentially with $k$. 

\section{Proof of Theorem \ref{theorem A}}\label{sec:4}

In this section we construct a closed, convex ball in $\R^{n}$ such that the inward cut locus of $\partial D$ coincides with the infinite tree $IT$. Then we will smooth out the truncated cones 
$A_{j_{1}j_{2}\cdots j_{m}}$ at each depth level $m$.

Firstly, for each depth level $m$, let us consider an \lq \lq $\varepsilon$-dilatation'' of the closed, convex ball $H_{m}$, namely we define
\begin{equation}
D_{m}:=\{\textrm{the convex ball whose boundary is } \partial H_{m}\}\cup\{x\in\R^{n}\ |\ d(x,\partial H_{m})\leq \varepsilon\},
\end{equation}
for any positive constant $\varepsilon>0$. Denote the limit
\begin{equation}
D:=\lim_{m\to \infty}D_{m}
\end{equation}
that is the $\varepsilon$-dilatation of convex ball $H$. 

Note that the boundary $\partial D\in\R^{n}$ is the convex $C^{1}$-hypersurface 
\begin{equation}
\partial D=\lim_{m\to \infty}\Bigl[\cup_{i=0}^{m}\cup_{j_{1}j_{2}\cdots j_{m}}(\tilde A_{j_{1}j_{2}\cdots j_{m}}\cup \tilde B_{j_{1}j_{2}\cdots j_{m}})\Bigr]\cup \tilde P\cup \tilde E,
\end{equation}
where the tilde notation means the $\varepsilon$-dilatation of the 
corresponding geometrical objects of $H$.

One can easily see that the geodesics orthogonal to $\partial D$ are in fact geodesics to $H$ extended by $\varepsilon$ at one of their ends, and therefore the inward cut locus of $\partial D$ coincides with the infinite tree $IT$. Obviously the set of end points $E$ are now in the interior of the ball $D$ and not on its boundary as in the case of $H$. This allows us to realize the infinite tree $IT$ as the inward cut locus of a hypersurface in $\R^{n}$. 

\setlength{\unitlength}{1cm} 
\begin{center}
\begin{picture}(9, 9)

\includegraphics[height=9cm, angle=0]{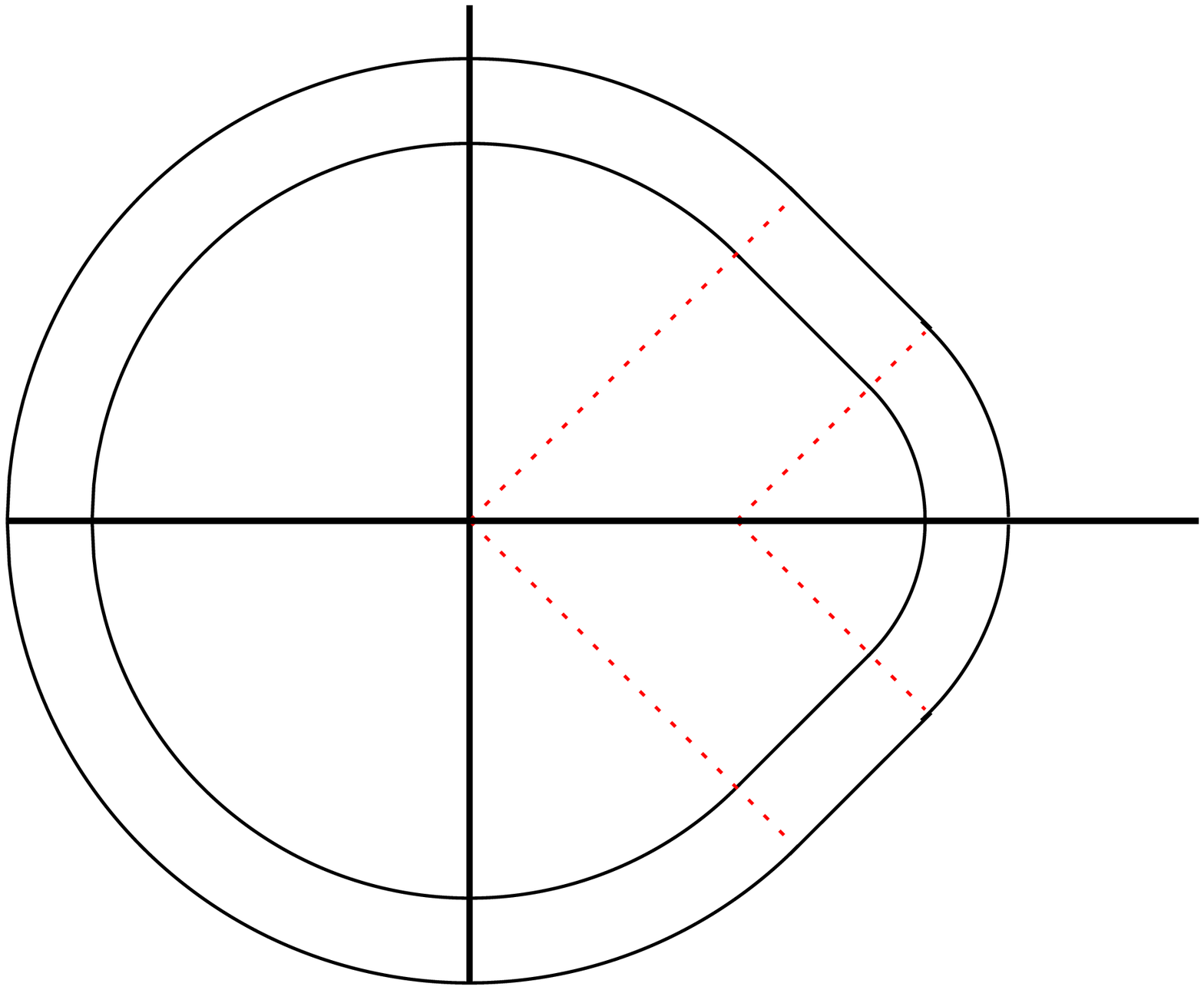}
\put(-5.8,4){$o$}
\put(-3.8,4){$q$}
\put(-8.6,4.5){$\varepsilon$}
\put(-6.5,6.5){$\partial H_{0}$}
\put(-8.5,7){$\partial D_{0}$}

\put(-3.8,5.7){$\hat c'$}
\put(-3,6.7){$\tilde c'$}

\put(-2.8,4.7){$\hat c$}
\put(-2,5.7){$\tilde c$}

\put(-9.3,3.8){$A$}
\put(-1.5,3.8){$B$}

\put(-0.3,4.3){\vector(1,0){0.1}}
\put(-0.5,4){$x_{1}$}
\put(-5.52,8){\vector(0,1){0.1}}
\put(-6.2,7.9){$x_{2}$}
\end{picture}
\end{center}

{\bf Figure 6.} The $\varepsilon$-dilatation of the convex ball $H_{0}$.


\bigskip

Secondly, we are going to smooth out the truncated cones $\tilde A_{j_{1}j_{2}\cdots j_{m}}$ on $\partial D$ such that the inward cut locus does not change. 

Let us refer to Figure 6 restricted to the upper half Euclidean plane 
$\mathbb H:=\{(x_{1},x_{2})\in \R^{2}\ |\ x_{2}>0\}$. Recall that $S$ and $S_{0}$ are circles of centers $o$ and $q$ with radii $r_{-1}$ and $r_{0}$, respectively. We further denote:

\begin{itemize}
 \item the intersection points of $\partial D_{0}$ with the horizontal coordinate axis by $A(a,0)$ and $B(b,0)$, $a<b$, respectively;
 \item the $x_{1}$ coordinates of the points $\tilde c'$ and $\tilde c$ by $t_{1}$ and $t_{2}$;
 \item the straightline segment determined by the points $\tilde c'$ and $\tilde c$ by $d=\{d(x_{1})\ |\ t_{1}<x_{1}<t_{2}\}$.
 \item the large circular segment on $\partial D_{0}$ from $A$ to $\tilde c'$ by $\{f_{1}(x_{1})\ |\ a<x_{1}<t_{1}\}$, and the small circular segment on $\partial D_{0}$ from $\tilde c$ to $B$ by $\{f_{2}(x_{1})\ |\ t_{2}<x_{1}<b\}$.
 \end{itemize}

We have the following smoothing Proposition:

\begin{proposition}\label{smoothing prop}
With the notations above, there is a $C^{\infty}$-function $F(x_{1})$ defined on $[a,b]$ that takes values in $\mathbb H$ such that 
\begin{itemize}
\item $F(x_{1})=f_{1}(x_{1})$ for $x_{1}\in [a,t_{1}]$;
\item $F(x_{1})=f_{2}(x_{1})$ for $x_{1}\in [t_{2},b]$;
\item $d(x_{1})\geq F(x_{1})\geq \max(f_{1}(x_{1}),f_{2}(x_{1}))$, for $t_{1}<x_{1}<t_{2}$;
\item all the inward straight lines (geodesic rays) orthogonal to $F(x_{1})$, for $t_{1}<x_{1}<t_{2}$, do not intersect each other in the region $\{x_{2}<d(x_{1})\}\cap \mathbb H$.
\end{itemize}
\end{proposition}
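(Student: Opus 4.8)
The plan is to construct $F$ by interpolating between the two circular arcs $f_1$ and $f_2$ across the gap $(t_1,t_2)$ using a smooth partition-of-unity bump, and then to check that the orthogonality condition in the fourth bullet survives the interpolation. First I would set up a standard $C^\infty$ cutoff: choose $\chi\colon\R\to[0,1]$ with $\chi\equiv 1$ on $(-\infty,t_1]$, $\chi\equiv 0$ on $[t_2,\infty)$, and all derivatives of $\chi$ vanishing at $t_1$ and $t_2$ (a shifted/rescaled $e^{-1/x}$-type function). The naive candidate $\chi f_1+(1-\chi)f_2$ is $C^\infty$ and agrees with $f_i$ on the correct intervals, but it need not dominate $\max(f_1,f_2)$, nor stay below the chord $d$, nor have the right convexity for the geodesic-ray condition. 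So instead I would work with the chord: write $F=d-\psi$ where $\psi\ge 0$ is a smooth "defect" that is made to vanish to infinite order at the two endpoints $t_1,t_2$ while being chosen large enough in between that $F$ stays above $\max(f_1,f_2)$ and small enough that $F\le d$. Concretely, since $f_1,f_2$ are real-analytic on a neighborhood of $[t_1,t_2]$ and lie strictly below the chord $d$ on the open interval while meeting it (to first order, by the $C^1$ matching built into $\partial D_0$) at the respective endpoints, one can take $F$ to be a smooth curve lying in the thin lens-shaped region between $\max(f_1,f_2)$ and $d$, tangent to $f_1$ at $t_1$ and to $f_2$ at $t_2$ to infinite order; the existence of such an $F$ with prescribed infinite-order contact is a routine Whitney/Borel-type extension statement, and monotonicity of the various curves keeps it inside $\mathbb H$.

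Next I would address the crucial fourth bullet, which is the real content. The inward normal rays to the graph of $F$ over $x_1\in(t_1,t_2)$ fail to intersect inside the slab $\{x_2<d(x_1)\}\cap\mathbb H$ precisely when the focal (caustic) points of the curve $y=F(x_1)$ all lie at height $\ge d(x_1)$ — equivalently, when the radius of curvature of $F$ at each point is at least the distance from that point down to the chord along the inward normal. Here the geometry of the original construction is doing the work: the truncated cone $\tilde A$ is a ruled (zero-curvature in the profile) piece whose normals all pass through a single point of the cut-locus segment, which lies well below the chord; the two capping arcs $f_1,f_2$ are arcs of the large/small spheres whose normals focus at $o$ and $q$ respectively, again below the relevant region. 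So I would impose, as part of the choice of $\psi$, the quantitative bound that $F$ be \emph{flat enough}: $F''$ small in absolute value relative to the vertical clearance $d(x_1)-F(x_1)$, so that the center of curvature of $F$ at $(x_1,F(x_1))$ lies on the far side of (below) the line $x_2=d(x_1)$, uniformly for $t_1<x_1<t_2$. Since the obstruction $d-\max(f_1,f_2)$ we must fit $F$ into is of size comparable to $t_0\sim t_i$ and can be taken as small as we like by shrinking $\phi$ (recall $l_i=t_i/\sin(\phi/3^i)$, and the caps flatten as $\phi\to 0$, as already exploited in the bi-Lipschitz discussion preceding Corollary \ref{cor 3.2}), the curvature needed to route $F$ through that region is itself uniformly bounded, and one can arrange the curvature radius to exceed the (bounded below, thanks to $r_{-1}>0$) vertical clearance. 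This makes the normals diverge downward rather than cross.

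The remaining verifications are bookkeeping: that $F$ matches $f_1,f_2$ together with \emph{all} derivatives at $t_1,t_2$ so that the full revolved hypersurface $\partial D$ — obtained by spinning the profile $F$ about the axis $\gamma$ and performing the analogous smoothing on every $\tilde A_{j_1\cdots j_m}$ at every depth $m$, with $\varepsilon$ and the smoothing localized inside disjoint neighborhoods of the individual cones — is genuinely $C^\infty$ (indeed the needed smoothing at depth $m$ is supported in a region of diameter $\to 0$, so the global curve/surface is $C^\infty$ away from $\tilde E$ and the infinite-order contact guarantees $C^\infty$ across the seams), and that replacing $\tilde A$ by the graph of $F$ changes neither the inward cut locus (the normals to the flattened piece still focus on the same open segment $s_{j_1\cdots j_m}$, by the same concentration argument used for $H_0$) nor the position of $E$. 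The main obstacle is exactly the third paragraph's point: producing a \emph{single} smooth $F$ that simultaneously (i) is sandwiched $\max(f_1,f_2)\le F\le d$, (ii) has infinite-order contact with $f_1,f_2$ at the endpoints, and (iii) stays flat enough that no two inward normals meet below the chord. Items (i) and (ii) are a soft interpolation lemma; item (iii) is where one must keep careful track of how the clearance $d-\max(f_1,f_2)$ and the minimal curvature radius of $F$ scale with $\phi$, and that is the step I would write out in detail.
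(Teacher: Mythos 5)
Your soft step (a smooth interpolant sandwiched between $\max(f_1,f_2)$ and $d$ with infinite-order contact with $f_1$ at $t_1$ and $f_2$ at $t_2$) is fine and close in spirit to the paper, which however builds $F$ completely explicitly: it damps the derivatives of the arcs, $F_1(x)=\int_0^x[1-g_{x_Q}(t)]f_1'(t)\,dt+y_{\tilde c'}$ and symmetrically $F_2$, joined by a horizontal plateau $y=y_b$, rather than invoking a Whitney/Borel extension. (Note a small internal slip: writing $F=d-\psi$ with $\psi$ vanishing to infinite order at $t_1,t_2$ is incompatible with infinite-order tangency to $f_1,f_2$ there, since $d$ and $f_i$ agree only to first order; your later formulation, infinite-order contact with the arcs, is the correct one.)

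The genuine gap is the fourth bullet, which you yourself defer, and the criterion you propose for it is miscalibrated. The region $\{x_2<d(x_1)\}\cap\mathbb H$ is not the thin lens between $F$ and the chord: it is the macroscopic region between the chord and the axis $\{x_2=0\}$, i.e.\ the part of the interior of $D_0$ that the inward normals must traverse before reaching the cut segment $oq$ on the axis. Since $F\le d$ and $F$ is concave toward the interior, its centers of curvature automatically lie below the chord, so your condition that they lie \emph{below the line} $x_2=d(x_1)$ is vacuous (and the companion statement that the focal points lie at height $\ge d(x_1)$ has the inequality on the wrong side); it does not prevent two inward normals from crossing deeper inside, above the axis. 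What is actually needed is that the radius of curvature of $F$ at every point be at least the distance from that point to the axis along the inward normal, a quantity of order $r_{-1}+\varepsilon$, not of order of the clearance $d-\max(f_1,f_2)$; in particular your scaling argument in $\phi$ does not address it, and $\phi$ is fixed in the construction anyway. The paper obtains the needed bound structurally: because $F_1'=(1-g)f_1'$, the curvatures satisfy $k_{f_1}\le k_{F_1}\le 0$ (and similarly for $F_2$), so $F$ is pointwise no more curved than the original circles, its radius of curvature is at least $r_{-1}+\varepsilon$ (resp.\ $r_0+\varepsilon$), the focal points lie at or beyond $o$ and $q$ on the axis, and hence no two inward normals meet in the region. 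Your proposal needs this pointwise curvature comparison (or an equivalent quantitative statement) to close the argument.
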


In other words, we will contract a smooth curve that joins points $\tilde c'$ and $\tilde c$. By the same operation in the lower half plane $\{(x_{1},x_{2})\ |\ x_{2}\leq 0\}$ we obtain a smooth plane curve.  


\setlength{\unitlength}{1cm} 
\begin{center}
\begin{picture}(10,10 )

\includegraphics[height=6cm, angle=0]{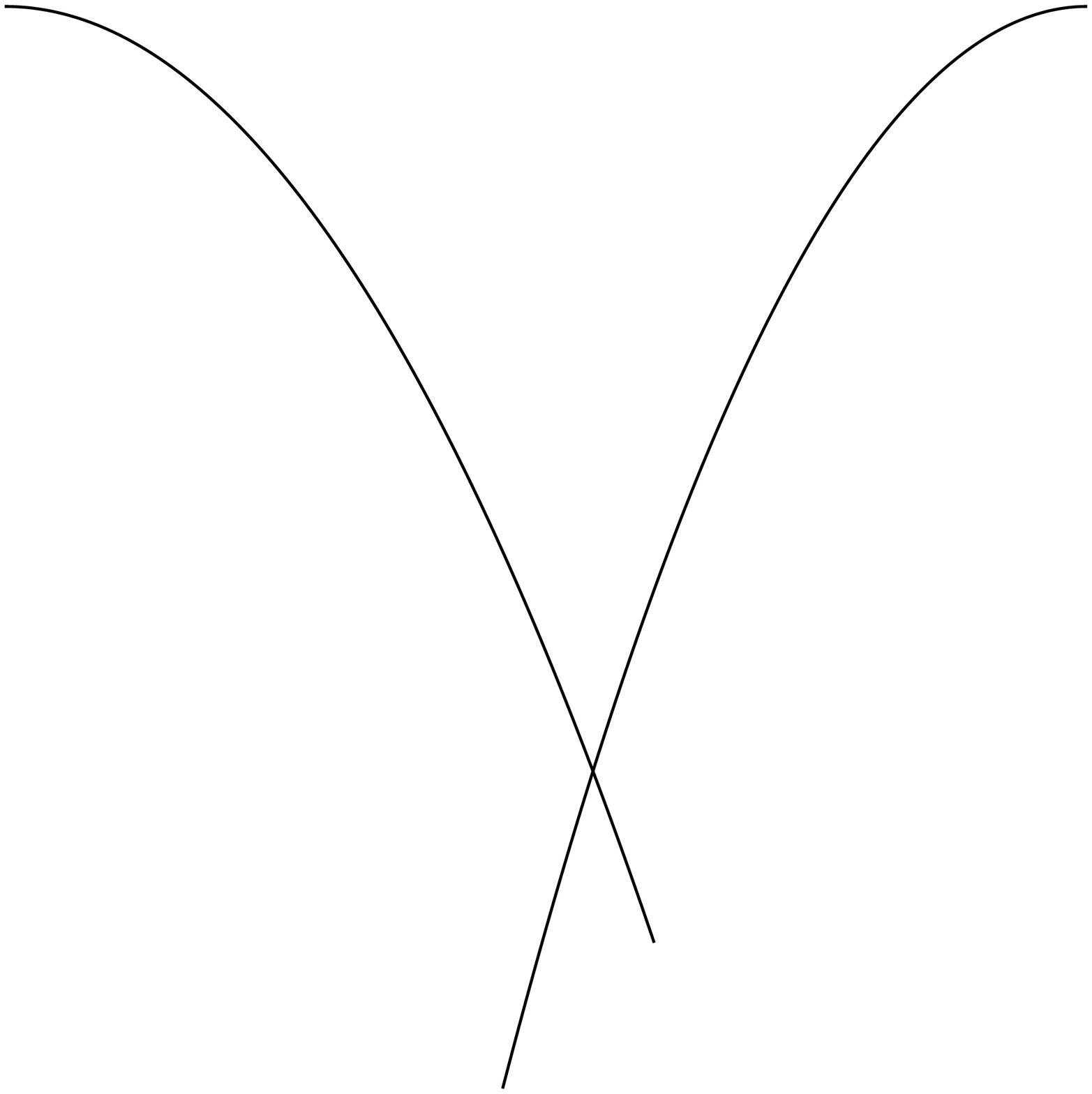}
\qquad\qquad
\includegraphics[height=6cm, angle=0]{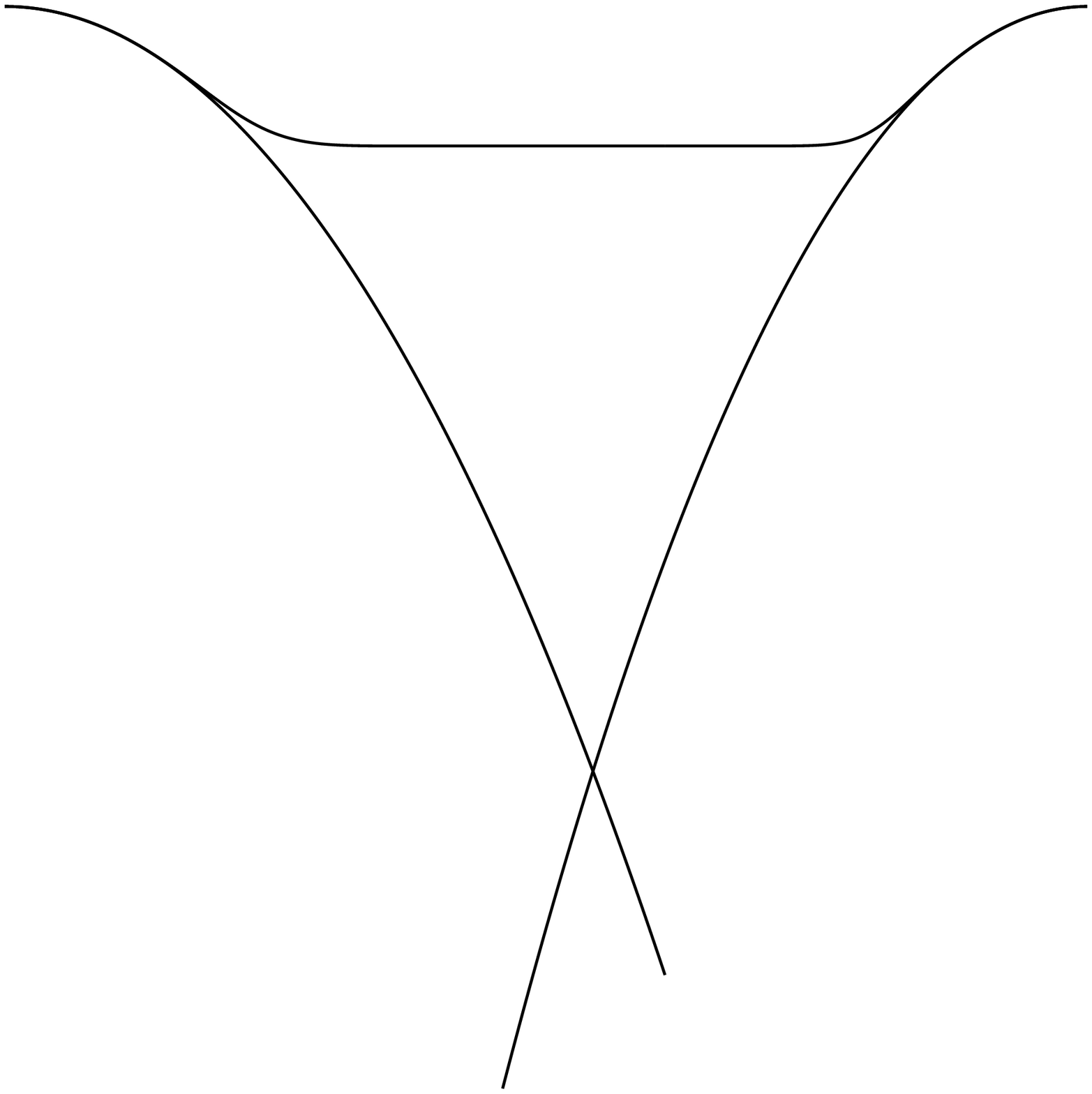}

\put(-13.7,0){\line(0,1){8}}
\put(-5.89,0){\line(0,1){8}}
\put(-13.7,8){\vector(0,1){0.1}}
\put(-5.89,8){\vector(0,1){0.1}}
\put(-13.7,5.9){\line(1,0){6}}
\multiput(-13.7,1.8)(0.5,0){13}{\line(1,0){0.2}}
\multiput(-13.7,5.16)(0.5,0){13}{\line(1,0){0.2}}
\multiput(-11.6,-2)(0,0.5){15}{\line(0,1){0.2}}
\multiput(-9.6,-2)(0,0.5){15}{\line(0,1){0.2}}
\put(-10.55,5.16){\circle*{0.1}}

\multiput(-10.55,-2)(0,0.5){15}{\line(0,1){0.2}}
\multiput(-5.89,5.9)(0.5,0){13}{\line(1,0){0.2}}
\multiput(-5.89,1.8)(0.5,0){13}{\line(1,0){0.2}}
\put(-13.7,-2){\line(1,0){6}}
\multiput(-13.7,-2)(0,0.2){13}{\line(0,1){0.05}}
\put(-5.89,-2){\line(1,0){6}}
\multiput(-5.89,-2)(0,0.2){13}{\line(0,1){0.05}}
\put(-7.7,-2){\vector(1,0){0.1}}
\put(0.11,-2){\vector(1,0){0.1}}
\put(-14.3,6){$\tilde c'$}
\put(-13.7,5.9){\circle*{0.1}}
\put(-8.3,6){$\tilde c$}
\put(-8,5.9){\circle*{0.1}}
\put(-14.3,8){$y$}

\put(-6.4,6){$\tilde c'$}
\put(-5.9,5.9){\circle*{0.1}}
\put(-0.3,6){$\tilde c$}
\put(-0.2,5.9){\circle*{0.1}}
\put(-6.4,8){$y$}

\put(-10.55,1.8){\circle*{0.1}}
\put(-11.6,5.16){\circle*{0.1}}
\put(-9.6,5.16){\circle*{0.1}}
\put(-9.02,5.16){\circle*{0.1}}
\put(-12.33,5.16){\circle*{0.1}}
\put(-12.5,4.5){$P$}
\put(-11.5,5.4){$Q$}
\put(-10.5,1.4){$R$}
\put(-9.6,5.4){$S$}
\put(-9,4.5){$T$}

\put(-11.6,-2.5){$x_Q$}
\put(-10.5,-2.5){$x_R$}
\put(-9.6,-2.5){$x_S$}

\put(-14,-2.4){$o$}
\put(-6,-2.4){$o$}
\put(-2.5,1.4){$R$}
\put(-2.76,1.8){\circle*{0.1}}

\put(-7.7,-2.5){$x$}
\put(0.11,-2.5){$x$}

\put(-14.3,1.7){$y_R$}
\put(-14.3,5.1){$y_b$}

\put(-11.3,0.5){$f_{2}$}
\put(-11.3,2.5){$f_{1}$}

\put(-3.5,0.5){$f_{2}$}
\put(-3.5,2.5){$f_{1}$}
\put(-10.3,6){$d$}
\put(-3,5.4){$F$}

\put(-10.55,-2){\circle*{0.1}}
\put(-11.6,-2){\circle*{0.1}}
\put(-9.6,-2){\circle*{0.1}}

\end{picture}
\end{center}
\vspace{3 cm}
{\bf Figure 7.} Smoothing of $\partial D_{0}$. Before smoothing (left), after smoothing (right).


\begin{proof}
For the sake of simplicity we rotate counterclockwise with $\frac{\pi}{2}-\phi$ the coordinates system used in Section 2 (compare with Figure 2) obtaining Figure 7 (left). The new coordinate system is denoted with $\{x,y\}$ while $\tilde c'$, $\tilde c$, $f_{1}$, $f_{2}$, $d$ have the same meaning as above. We denote the coordinates of points $\tilde c'$ and $\tilde c$ by $(x_{\tilde c'}, y_{\tilde c'})$ and 
$(x_{\tilde c}, y_{\tilde c})$, respectively. Moreover, we denote $\{R\}=f_{1}\cap f_{2}$ with coordinates
$(x_{R}, y_{R})$. In Figure 7 we have expressed by vertical dots the fact that $y_{R}$ is actually quite far from the origin $o$ and quite closed to $\tilde c'$, i.e. $d(o,y_{R})>>d(y_{R},y_{\tilde c'})$. 

With these notations, we consider the function 
\begin{equation}
F_{1}(x):=\int_{0}^{x}[1-g_{\sigma}(t)]f_{1}'(t)dt|_{\sigma=x}+y_{\tilde c'},\qquad 0\leq x\leq x_{R},
\end{equation}
where $0<\sigma\leq x_{R}$, 
\begin{equation}
g_{\sigma}(t):=
\begin{cases}
 0, &t \leq 0\\
 \frac{\varphi(t)}{\varphi(t)+\varphi(\sigma-t)}, &0<t<\sigma\\
 1, &t\geq \sigma
\end{cases}
\end{equation}
and
\begin{equation}
\varphi(t):=\begin{cases}
0, & t\leq 0\\
e^{-\frac{1}{t}}, & t>0.
\end{cases}
\end{equation}

\begin{center}
\includegraphics[height=7cm, angle=0]{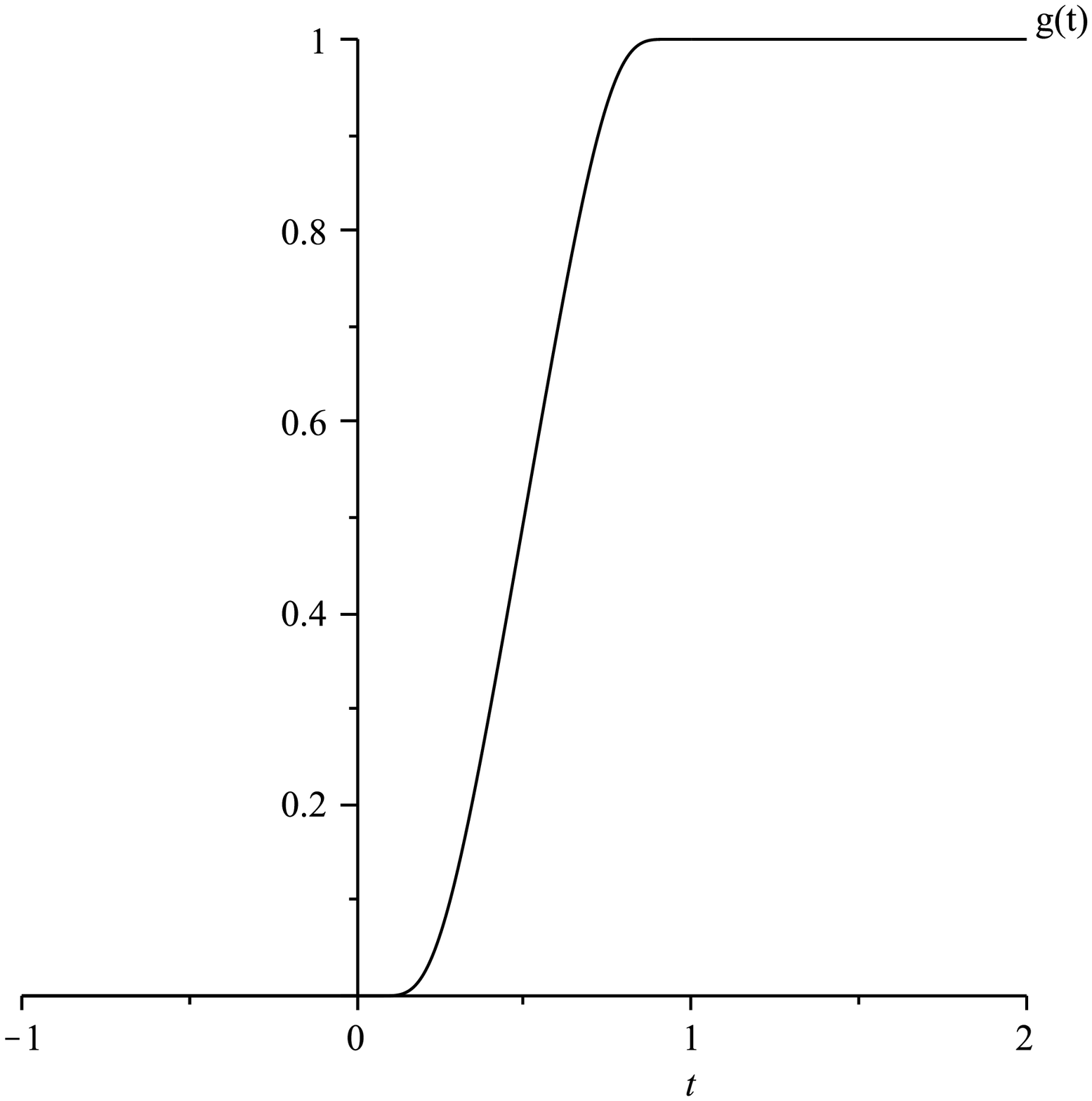}
\end{center}

{\bf Figure 8.} The graph of the function $g:=g_{\sigma}$ for $\sigma=1$.

\bigskip

It is elementary to see that $F_{1}$ is a smooth monotone non increasing function on $[0,x_{R}]$, 
$F_{1}(x)\geq f_{1}(x)$, $F_{1}(0)=x_{\tilde c'}$, $F_{1}(x_{R})=y_{R}+\delta_{1}$, $\delta_{1}>0$. Moreover $F_{1}'(0)=0=F_{1}'(x_{R})$.

Similarly, we define 
\begin{equation}
F_{2}(x):=\int_{x}^{1}h_{\rho}(t)f_{2}'(t)dt|_{\rho=x}+y_{\tilde c},\qquad x_{R}\leq x\leq x_{\tilde c},
\end{equation}
where $x_{R}<\sigma\leq x_{\tilde c}$, 
\begin{equation}
h_{\rho}(t):=
\begin{cases}
 1, &t \leq 0\\
 \frac{\varphi(t-\rho)}{\varphi(t-\rho)+\varphi(1-t)}, &0<t<\rho\\
 0, &t\geq \rho
\end{cases}
\end{equation}
and $\varphi$ same as above.

One can easily see that $F_{2}$ is a smooth monotone non decreasing function on 
$[x_{R}, x_{\tilde c}]$, 
$F_{2}(x)\geq f_{2}(x)$,  $F_{2}(x_{R})=y_{R}+\delta_{2}$, $\delta_{2}>0$, $F_{2}(x_{\tilde c})=y_{\tilde c}$. Moreover $F_{2}'(x_{R})=0=F_{2}'(x_{\tilde c})$.

It is obvious that we can choose now a constant 
$y_{b}\in (\max\{y_{R}+\delta_{1}, y_{R}+\delta_{2}\}, y_{\tilde c})$, in practice $y_{b}$ will be taken as closed as possible to $y_{\tilde c'}$, such that there exists $x_{Q}\in (0,x_{R})$ and $x_{S}\in (x_{R}, x_{\tilde c})$ that satisfies $F_{1}(x_{Q})=F_{2}(x_{S})=y_{b}$ (see Figure 7).


\setlength{\unitlength}{1cm} 
\begin{center}
\begin{picture}(7,7 )

\includegraphics[height=6cm, angle=0]{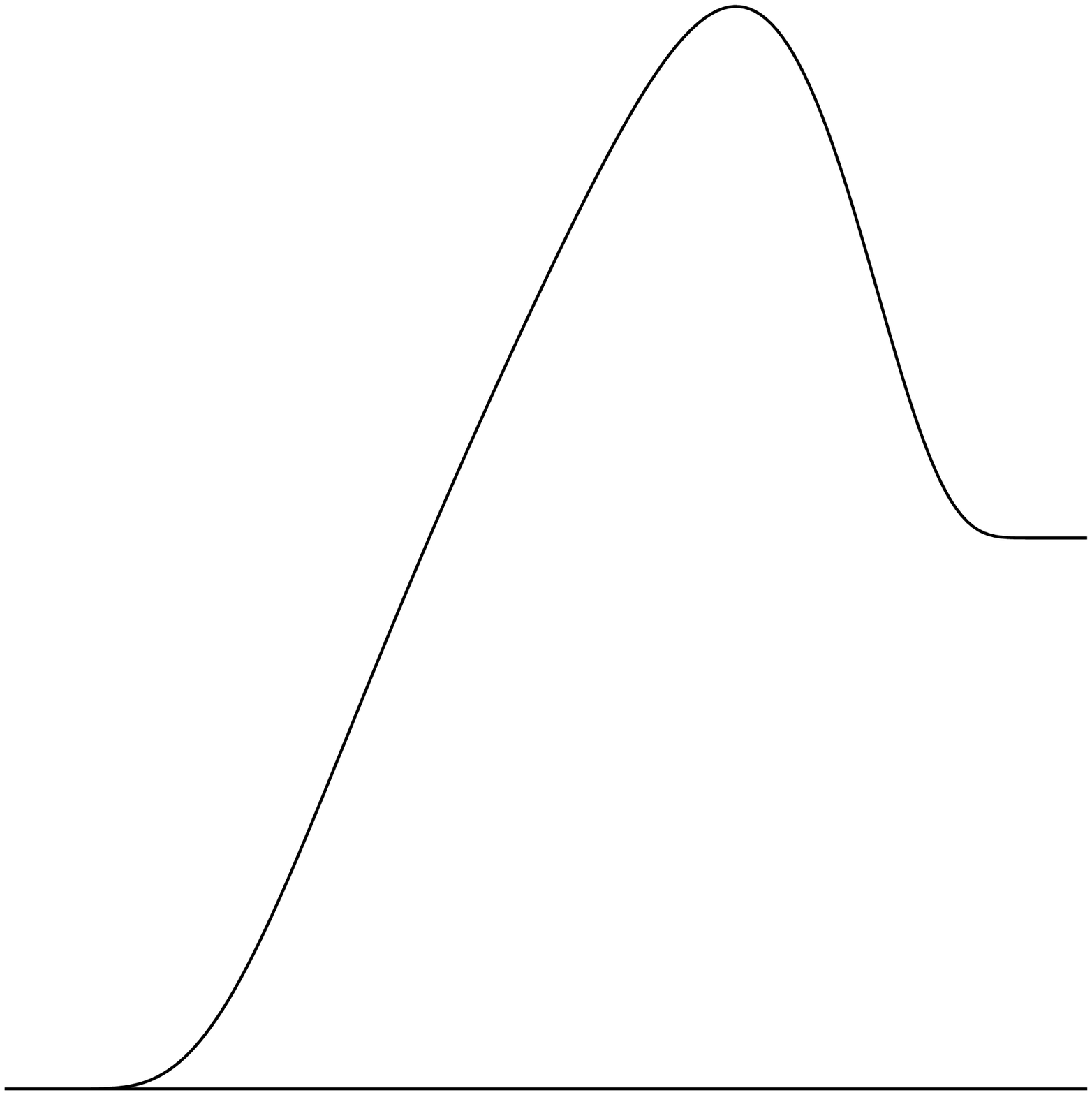}

\put(-5.9,3.05){\line(1,0){8}}
\put(-5.9,6.05){\vector(0,1){0.1}}

\put(-5.9,0){\line(0,1){6}}
\put(2,3.05){\vector(1,0){0.1}}

\put(-6.3,3){$o$}
\put(2,2.6){$x$}

\put(0,2.6){$x_{Q}$}

\put(-3,0.3){$k_{f_{1}}(x)$}

\put(-3,6){$k_{F_{1}}(x)$}

\put(-7.3,0){$-\frac{1}{\sqrt{r_{-1}}}$}

\multiput(-0.1,0.1)(0,.5){13}{\line(0,1){0.1}}
\end{picture}
\end{center}
\vspace{1 cm}
{\bf Figure 9.} The curvature of $F_{1}(x)$. 
\bigskip


With these, we redefine 
\begin{equation}
F_{1}(x):=\int_{0}^{x}[1-g_{x_{Q}}(t)]f_{1}'(t)dt+y_{\tilde c'},\qquad 0\leq x\leq x_{Q},
\end{equation}
and 
\begin{equation}
F_{2}(x):=\int_{x}^{1}h_{x_{S}}(t)f_{2}'(t)dt+y_{\tilde c},\qquad x_{S}\leq x\leq x_{\tilde c}.
\end{equation}

Then the function 
\begin{equation}
F(x):=\begin{cases}
F_{1}(x), & 0\leq x\leq x_{Q}\\
y_{b}, & x_{Q}<x<x_{S}\\
F_{2}(x), & x_{S}\leq x\leq x_{\tilde c}
\end{cases}
\end{equation}
will have the required properties. 

One can see that the curvature of $F_{1}$, namely $k_{F_{1}}(x):=\frac{F_{1}''}{[1+(F_{1}')^{2}]^{3/2}}$, 
satisfies $k_{f_{1}}\leq k_{F_{1}}$ on $[0,x_{\tilde c}]$ (see Figure 9) and similar for $F_{2}$. This guaranties that $F$ smoothly joins points $\tilde c'$ and $\tilde c$ and that the inner normals to $F$ cannot  intersect each other in the region $[0,x_{\tilde c}]\cap \mathbb H$.

$\qedd$
\end{proof}

We denote by $\tilde D_{0}$ the ball obtained from $D_{0}$ after smoothing out the truncated cones making use of the function $F$ introduced in Proposition \ref{smoothing prop}.  $D_{0}$ is actually a surface of revolution obtained by rotating the profile curve $F_{1}([a,t_{1}])\cup
F_{2}([t_{2},b])\cup d((t_{1},t_{2}))$ around the $\gamma$ axis. This is a $C^{\infty}$-surface whose inward cut locus is the segment $oq$.

This construction can be repeated in any dimension, we apply the above smoothing procedure to each $\partial D_{m}$, inductively. In this way we end up with a ball $\widetilde D_{m}\subset \R^{n}$ with $C^{\infty}$ boundary. 

By putting 
\begin{equation}
\widetilde D :=\lim_{m\to \infty} \widetilde D_{m}
\end{equation}
we obtain a closed, convex ball in $\R^{n}$ with boundary $\partial \widetilde D$ such that the inward cut locus of $\partial \widetilde D$ is the infinite tree $IT$, actually a fractal as shown in Section 3. 

One should remark that the set of end points $E$ of the fractal $IT$ are in the interior of $\widetilde D$ at distance $\varepsilon$ from the hypersurface $\partial \widetilde D$. However, taking into account that these end points actually lie on the tree branches $\gamma_{j_{1}\dots j_{m}}$, by extending these branches, they will intersect $\partial \widetilde D$ giving a set of points $\widetilde E$ on $\partial \widetilde D$. Such a point is the point $B$ in FIgure 6, for $m=0$). We point out that the points $\widetilde E$ are on $\partial \widetilde D$, but they do not belong to the fractal $IT$, they are only an $\varepsilon$-displacement of the end points $E$.

We will study in the following the differentiability of $\partial \widetilde D$. We have
\begin{proposition}\label{deg of diff}
The hypersurface $\partial \widetilde D$ in $\R^{n}$ is:
\begin{enumerate}
\item at least $k$-differentiable at points $\widetilde E$,
\item $C^{\infty}$ at any point $\partial \widetilde D\setminus \widetilde E$.
\end{enumerate}
\end{proposition}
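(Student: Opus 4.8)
The plan is to analyze the regularity of $\partial\widetilde D$ separately near the two kinds of points: the displaced endpoints $\widetilde E$ and everything else. For part (2), away from $\widetilde E$, one notes that $\partial\widetilde D$ is built from finitely many pieces at each depth level $m$ — the smoothed profiles $F$ of Proposition \ref{smoothing prop} rotated around the axes $\gamma_{j_1\cdots j_m}$, together with the $\varepsilon$-dilated spherical regions $\widetilde B_{j_1\cdots j_m}$ and the cap $\widetilde P$. Each such piece is a $C^\infty$ hypersurface (a surface of revolution of a $C^\infty$ curve, or a round sphere of positive radius, or an $\varepsilon$-tube), and the smoothing Proposition was precisely engineered so that adjacent pieces glue together with matching first derivatives and, in fact, $C^\infty$ matching: recall $F_1'(0)=0=F_1'(x_R)$, $F_2'(x_R)=0=F_2'(x_{\tilde c})$, and $F$ is locally constant on the flat middle part $[x_Q,x_S]$, so all derivatives vanish from both sides at the junctions $x_Q$ and $x_S$. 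Hence any point lying in the interior of a single piece, or on a junction between two pieces away from $\widetilde E$, has a $C^\infty$ neighborhood in $\partial\widetilde D$.

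For part (1), fix an endpoint $p_\infty\in E$ and let $\widetilde p_\infty\in\widetilde E$ be its $\varepsilon$-displacement along the corresponding ray $\gamma_{j_1j_2\cdots}$. Near $\widetilde p_\infty$ the hypersurface $\partial\widetilde D$ is the nested limit of the smoothed construction: the truncated-cone pieces at depth $m$ are replaced by surfaces of revolution over profiles with half-angle $\phi/3^m$, scaled by the radii $r_{m-1}\sim l_m$. The point is to show the limiting hypersurface is $C^k$ at $\widetilde p_\infty$, i.e. admits a $k$-times differentiable local graph representation, but not better. The natural strategy is to set up a local graph of $\partial\widetilde D$ over the tangent hyperplane at $\widetilde p_\infty$ (which one should first verify is well-defined, using that the cone half-angles $\pi/2-\phi/3^m\to\pi/2$ so the normal directions converge) and estimate the $j$-th derivatives of this graph on the dyadic-type shells at distance $\approx r_m$ from $\widetilde p_\infty$. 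Each successive shell is a rescaled copy of the previous one by the similarity ratio $1/3^{k-1}$ (this is exactly the self-similarity exploited in Section \ref{sec:3}), while the linear scale of the geometric features $t_m=(1/3^{k-1})^m$ shrinks by the same factor; a $j$-th derivative of a graph function scales like (feature amplitude)$/$(horizontal scale)$^j$, and one checks that the amplitude of the bump contributed at level $m$ is of order $t_m\cdot(\phi/3^m)$ or so. Comparing the growth/decay of (amplitude)$/(\text{scale})^j$ as $m\to\infty$ shows the series of $j$-th derivatives stays bounded precisely up to $j=k$ and blows up for $j=k+1$, which pins the differentiability class at exactly $k$. The choice $t_i=(3^{-(k-1)})^i$ and $n(k)=\tfrac{3^{k+1}}{2}+1$ is calibrated to make this threshold land exactly at $k$.

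The main obstacle I expect is the derivative bookkeeping in part (1): one must track how the $\varepsilon$-dilation, the smoothing function $F$ (whose derivatives near the junctions vanish to infinite order, which is helpful), and the similarity scaling interact across infinitely many nested levels, and then produce a clean comparison test — essentially showing $\sum_m (\text{amplitude}_m)/(\text{scale}_m)^{k}<\infty$ while the analogous sum with exponent $k+1$ diverges. A secondary technical point is to confirm that the profile pieces can be chosen uniformly (the $F$'s at different levels are rescalings of one model profile) so that the self-similarity is exact enough for the scaling estimate; here one invokes that $\phi$ may be taken arbitrarily small, which flattens the spherical caps and makes the model at each level genuinely a rescaled copy of the model at the previous level, up to a bi-Lipschitz error that, as in Corollary \ref{cor 3.2}, does not affect the relevant (integer-threshold) conclusion. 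Part (2) is then essentially immediate from the explicit $C^\infty$ local descriptions and the vanishing-derivative gluing built into Proposition \ref{smoothing prop}.
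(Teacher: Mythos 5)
Your part (2) is correct and is exactly what the paper does: away from $\widetilde E$ the boundary is locally a finite union of explicit $C^{\infty}$ pieces (rotated profiles $F$, spherical regions, the cap), glued through the smoothing function of Proposition \ref{smoothing prop} whose one-sided derivatives all match at the junctions.

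For part (1) your overall strategy --- bound the $j$-th derivative at a point of $\widetilde E$ by $(\text{feature amplitude at level } m)/(\text{horizontal scale at level } m)^{j}$ and locate the threshold --- is indeed the paper's strategy; the paper implements it by reducing to a one-variable height function $\zeta$ along each branch and estimating finite-difference quotients $\delta^{r}_{h}[\zeta]/h^{r}$. However, you have misidentified the horizontal scale, and with the quantities you actually name the computation does not produce the threshold $j=k$. Your amplitude $t_{m}\cdot\phi/3^{m}\sim \phi\,3^{-km}$ is right, but you take the horizontal scale to be $t_{m}=(1/3^{k-1})^{m}$ (the similarity ratio of Section \ref{sec:3}). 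That gives $(\text{amplitude})/(\text{scale})^{j}\sim \phi\,3^{-m(k-(k-1)j)}$, which stays bounded as $m\to\infty$ only for $j\le k/(k-1)$, i.e.\ $j\le 2$ when $k=2$ and $j=1$ when $k\ge 3$ --- nowhere near $C^{k}$. The correct horizontal scale is the spacing on the \emph{dilated} surface between $\widetilde q_{j_{1}\dots j_{m-1}}$ and the circle $\partial\widetilde D_{m}\cap\partial\widetilde D_{m-1}$, namely $h=\phi(1/3)^{m-1}(r_{m-1}+\varepsilon)\approx \phi\varepsilon\,3^{-m}$: it decays only like $3^{-m}$ precisely because the $\varepsilon$-dilatation keeps the radius of the spheres carrying the branch points bounded below by $\varepsilon$. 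With this scale one gets $(\text{amplitude})/(\text{scale})^{j}\sim 3^{-m(k-j)}/(\phi^{j-1}\varepsilon^{j})$, bounded iff $j\le k$, which is the paper's computation. So the $\varepsilon$-dilatation is not a bookkeeping nuisance to be ``tracked,'' as your obstacle paragraph suggests; it is the decisive ingredient that sets the horizontal scale and makes the threshold land at $k$. A secondary point: the estimate is only an upper bound on the difference quotients, so it yields existence of the $k$-th derivative (and vanishing of the lower ones) but says nothing about divergence at order $k+1$; the paper explicitly declines to conclude anything for $r>k$, and the proposition claims only ``at least $k$-differentiable,'' so your assertion that the class is pinned at exactly $k$ is an overclaim.
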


\begin{proof}
Let us denote
\begin{equation}
\widetilde q_{j_{1}j_{2}\dots j_{m}}:=\partial \widetilde D_{m}\cap \gamma_{j_{1}j_{2}\dots j_{m}},\qquad 
\widetilde Q:=\bigcup_{m=1}^{\infty}\{q_{j_{1}j_{2}\dots j_{m}}\}.
\end{equation}

It can be seen that $\widetilde E$ is the set of accumulation points of $\widetilde Q$. A moment of thought shows that the set $\partial \widetilde D_{m}\cap \partial \widetilde D_{m-1}$ is a circle on the sphere $S(q_{j_{1}j_{2}\dots j_{m-1}},r_{m-1}+\varepsilon)$, namely the base of the spherical cap cut off by the smoothed truncated cone $\widetilde A_{j_{1}j_{2}\dots j_{m}}$, and that $\widetilde q_{j_{1}j_{2}\dots j_{m-1}}=S(q_{j_{1}j_{2}\dots j_{m-1}},r_{m-1}+\varepsilon)\cap \gamma_{j_{1}j_{2}\dots j_{m}}$. Obviously $\widetilde E$ is not countable.

We define the function 
\begin{equation}
\zeta:\bigcup_{m=1}^{^\infty}\partial \widetilde{D}_{m-1}\to (0,\infty),\quad \zeta(\widetilde q_{j_{1}j_{2}\dots j_{m-1}})=
d(\widetilde q_{j_{1}j_{2}\dots j_{m}\dots \infty},\widetilde q_{j_{1}j_{2}\dots j_{m-1}}),
\end{equation}
where $\widetilde q_{j_{1}j_{2}\dots j_{m}\dots \infty}\in \widetilde E$ (see Figure 10).

One can see that the differentiability of $\partial \widetilde D$ at points of $\widetilde E$ can be expressed in terms of the differentiability of $\zeta$. Indeed, let $A\in \partial \widetilde D$ be a point such that there is an $m$ for which $A\in \partial \widetilde D_{m-1}$  and denote by $\mathfrak r(u^{1},u^{2},\dots,u^{n-1})$ its position vector in $\R^{n}$, when we denote by $(u^{1},u^{2},\dots,u^{n-1})$ the local coordinates on  $\widetilde D$. Likely, let $B\in \partial \widetilde D$ be a point such that $B\in \partial \widetilde D_{m}$ with position vector $\Phi(u^{1},u^{2},\dots,u^{n-1})\in \R^{n}$. Then it is clear that
\begin{equation}\label{position vectors}
\Phi(u^{1},u^{2},\dots,u^{n-1})=h(u^{1},u^{2},\dots,u^{n-1})\cdot \mathfrak e(u^{1},u^{2},\dots,u^{n-1})+
\mathfrak r(u^{1},u^{2},\dots,u^{n-1}),
\end{equation}
where $\mathfrak e(u^{1},u^{2},\dots,u^{n-1})$ is the outward pointing unit normal vector to $\partial \widetilde D_{m-1}$ at $A$, and $h$ is the height function $h(u^{1},u^{2},\dots,u^{n-1})=d(A,B)$. Here $d$ is the usual Euclidean distance. Then we can see that $h|_{\widetilde E}=\zeta |_{\widetilde E}$.

Obviously $\Phi$ is $C^{\infty}$ at any point $\widetilde D\setminus \widetilde E$ by construction, so we need to investigate only the differentiability of $\zeta |_{\widetilde E}$.

Let us recall that the ${r}$ differential of $\zeta$ can be written using finite differences as follows
\begin{equation}
\frac{d^{r}\zeta}{dx^{r}}=\lim_{h\to 0} \frac{\delta^{r}_{h}[\zeta]}{h^{r}},\quad r\geq 1
\end{equation}
where 
\begin{equation}
\delta^{r}_{h}[\zeta]:=\sum_{i=0}^{r}(-1)^{i}
\begin{pmatrix}
r\\i
\end{pmatrix}
\zeta\Bigl(x+\Bigl(\frac{r}{2}-i\Bigr)h\Bigr)
\end{equation}
and $h=d(\partial \widetilde D_{m}\cap \partial \widetilde D_{m-1},\widetilde q_{j_{1}j_{2}\dots j_{m-1}})$.


\setlength{\unitlength}{1cm} 
\begin{center}
\begin{picture}(10, 10)
\includegraphics[height=10cm, angle=0]{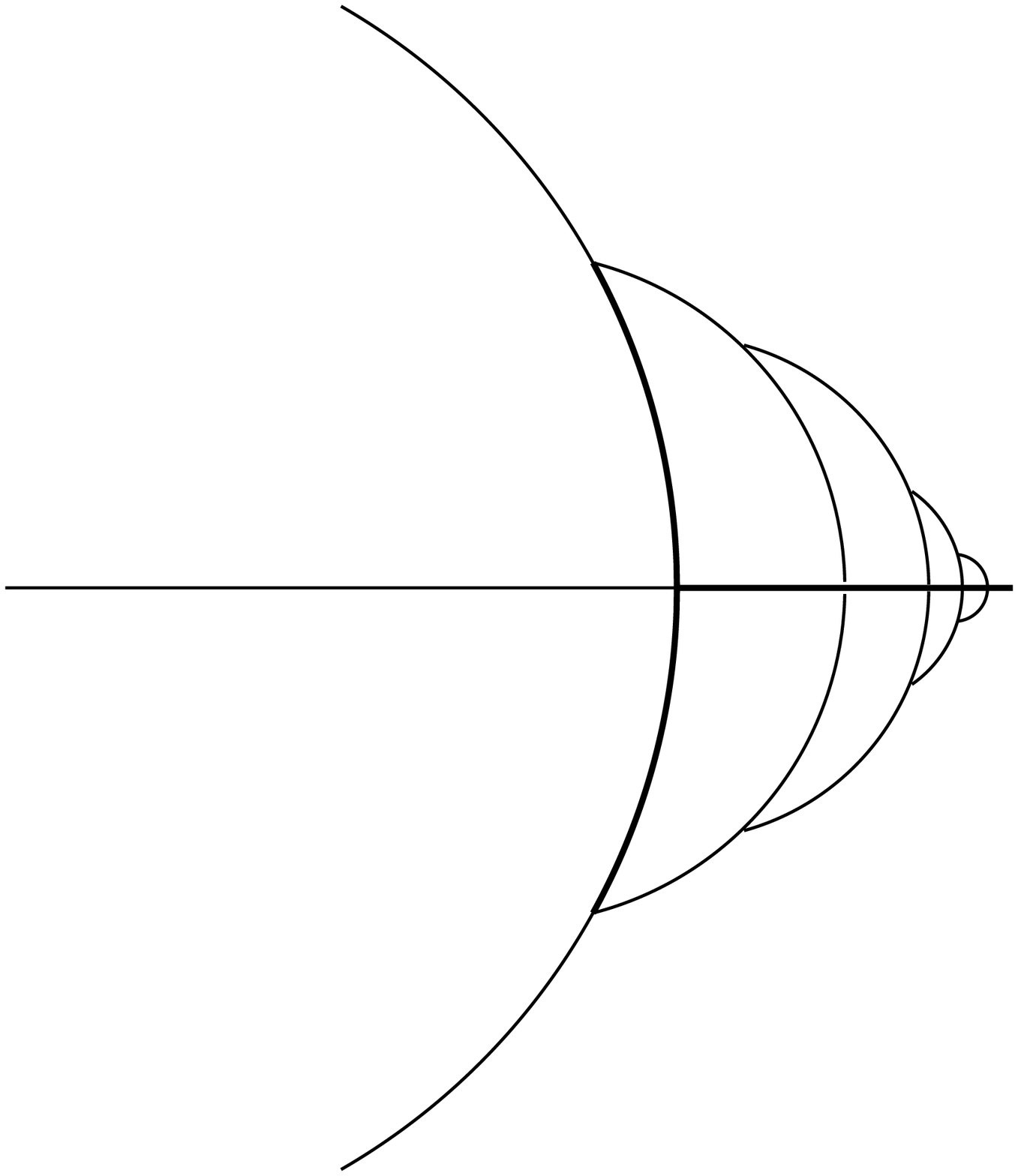}
\put(-0.5,5){$\widetilde e\in \widetilde E\subset \partial \widetilde D$}
\put(-9,5.3){$\gamma$}
\put(-7.5,0.3){$\partial \widetilde{D}_{m-1}$}
\put(-4,2){$\partial \widetilde{D}_{m}$}

\put(-4,4.5){$\widetilde q$}
\put(-0.8,5){\circle*{0.1}}
\put(-3.6,5){\circle*{0.1}}
\put(-3.3,5.3){$\zeta(\widetilde q)$}

\put(-4,6){$h$}
\end{picture}
\end{center}

{\bf Figure 10.} The function $\zeta$. For simplicity we have denoted $\gamma:=\gamma_{j_{1}j_{2}\dots j_{m-1}}$,  $\widetilde q:=\widetilde q_{j_{1}j_{2}\dots j_{m-1}}$ and $\widetilde e:=\widetilde q_{j_{1}j_{2}\dots j_{m}\dots \infty}$ (the smoothing function $F$ is not drawn). 


\bigskip

Let us remark that 
\begin{equation}\label{finite diff ratio}
\begin{split}
\frac{\delta^{r}_{h}[\zeta]}{h^{r}} & \leq \frac{2^{k-1}}{h^{r}} d(\max \zeta,\min \zeta)=
\frac{2^{k-1}}{h^{r}} d(\widetilde q_{j_{1}j_{2}\dots j_{m}\dots \infty},\widetilde q_{j_{1}j_{2}\dots j_{m-1}})\\
& =\frac{2^{k-1}}{h^{r}} \sum_{i=0}^{\infty}d(\widetilde q_{j_{1}j_{2}\dots j_{m}},\widetilde q_{j_{1}j_{2}\dots j_{m-1}})
\end{split}
\end{equation}

On the other hand, we compute
\begin{equation}\label{denom}
h=d(\partial \widetilde D_{m}\cap \partial \widetilde D_{m-1},\widetilde q_{j_{1}j_{2}\dots j_{m-1}})=
\varphi\Bigl(\frac{1}{3}\Bigr)^{m-1}(r_{m-1}+\varepsilon)>\phi\Bigl(\frac{1}{3}\Bigr)^{m}(r_{m-1}+\varepsilon),
\end{equation}
and 
\begin{equation}\label{numer}
\begin{split}
d(\widetilde q_{j_{1}j_{2}\dots j_{m}},\widetilde q_{j_{1}j_{2}\dots j_{m-1}})& =(r_{m}+\varepsilon)-(r_{m-1}+\varepsilon)+l_{m}=l_{m}\Bigl(1-\cos\phi\Bigl(\frac{1}{3}\Bigr)^{m}\Bigr)\\
& =
\frac{1}{(3^{k-1})^{m}}\tan\Bigl(\frac{1}{2}\phi\Bigl(\frac{1}{3}\Bigr)^{m}\Bigr),
\end{split}
\end{equation}
where we have used $r_{m}-r_{m-1}=-l_{m}\cos\phi\Bigl(\frac{1}{3}\Bigr)^{m}$, and the well known trigonometric formula $\dfrac{1-\cos\eta}{\sin\eta}=\tan\frac{\eta}{2}$.

One can easily see that there are infinitely many terms in the sum in the right hand side of the last equality in \eqref{finite diff ratio}, therefore, there is no harm in leaving out a finite number of them, say the first $m-1$ terms. It follows
\begin{equation}
\begin{split}
d(\widetilde q_{j_{1}j_{2}\dots j_{m}\dots \infty},\widetilde q_{j_{1}j_{2}\dots j_{m-1}})
=\sum_{i=m}^{\infty} \frac{1}{(3^{k-1})^{i}}\tan\Bigl(\frac{1}{2}\phi\Bigl(\frac{1}{3}\Bigr)^{i}\Bigr)
=\sum_{i=m}^{\infty} \frac{1}{(3^{k-1})^{i}}\Bigl(\frac{1}{2}\phi\Bigl(\frac{1}{3}\Bigr)^{i}\Bigr)
=\frac{\phi}{2}\frac{1}{3^{{km}-1}}
\end{split}
\end{equation}
where we have used the well-known formula $\lim_{\tau\to 0}\dfrac{\tan\tau}{\tau}=1$.

Let us remark now that $h\to 0$ when $m\to \infty$ by construction, so there is no harm in regarding $\lim_{h\to 0}$ as $\lim_{m\to \infty}$. 

It results
\begin{equation}\label{big lim}
\lim_{h\to 0}\frac{\delta^{r}_{h}[\zeta]}{h^{r}} \leq \lim_{m\to \infty}\frac{\frac{\phi}{2}\frac{1}{3^{{km}-1}}}{\Bigl[\phi\Bigl(\frac{1}{3}\Bigr)^{m}(r_{m-1}+\varepsilon)\Bigr]^{r}}
=\frac{1}{2}\lim_{m\to \infty}\Bigl[\frac{1}{\phi^{r-1}(r_{m-1}+\varepsilon)^{r}}\times \frac{1}{3^{mk-r(m-1)}}\Bigr]
\end{equation}

One can now easily see that in the case $r=k$ the limit in the right hand side of \eqref{big lim} takes the finite value $\dfrac{1}{2}\dfrac{1}{3^{k}}\dfrac{1}{\phi^{k-1}\varepsilon^{k}}$ and therefore $\lim_{h\to 0}\frac{\delta^{r}_{h}[\zeta]}{h^{r}}$ is finite at a point of $\widetilde E$, hence
the function $\zeta$ is at least $k$-differentiable on $\widetilde E$. Remark that for $r<k$,  \eqref{big lim} implies $\dfrac{d^{r}\zeta}{dx^{r}}=0$, namely $\partial \widetilde D$ is $C^{k-1}$ everywhere,
while for $r>k$ we cannot say anything about the convergence of the limit in left hand side of \eqref{big lim}.

The proposition is proved.

$\qedd$
\end{proof}


In order to finish our construction, we adapt an idea of A. Weinstein from \cite{W}, namely the technique of {\it making any disc a unit disc}. We have
\begin{proposition}\label{Ck-Weinstein}
Let $\Delta$ be an $n$-dimensional ball embedded in a  $C^{r}$ manifold $M$ of dimension $n$. For any Riemannian metric on $M\setminus (interior\ of\ \Delta)$, there is a new Riemannian  on $M$, agreeing with the original metric on $M\setminus (interior\ of\ \Delta)$, such that for some point $p\in \Delta$ the exponential map $exp_{p}$ is a $C^{r}$ diffeomorphism of the unit ball around the origin in $T_{p}M$ onto $\Delta$.
\end{proposition}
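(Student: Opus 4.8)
The plan is to leave the given metric $g_{0}$ untouched on the complement of the interior of $\Delta$ and to replace it on $\Delta$ by a metric written in geodesic polar form about a chosen interior point, arranged so that the two pieces glue to a metric on $M$ of the appropriate regularity and so that the radial curves issuing from $p$ are unit-speed geodesics meeting $\partial\Delta$ exactly at parameter $1$. Fix $p$ in the interior of $\Delta$ and a $C^{r}$ diffeomorphism $\Phi:\Delta\to\overline{B^{n}}$ onto the closed Euclidean unit ball with $\Phi(p)=0$; transporting everything to $\overline{B^{n}}$ with polar coordinates $(\theta,\rho)$, $\theta\in S^{n-1}$, $\rho\in[0,1]$, I look for a metric of the warped form $g=d\rho^{2}+G_{\rho}$, where $G_{\rho}$ is a smooth $\rho$-family of Riemannian metrics on $S^{n-1}$. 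For any such $g$ the curve $\rho\mapsto(\theta_{0},\rho)$ is a unit-speed geodesic, and by the Gauss lemma $\exp_{p}$ is nonsingular at $(\theta,\rho)$ precisely when $G_{\rho}$ is non-degenerate there; so it suffices to build one admissible family $G_{\rho}$, non-degenerate for every $\rho\in[0,1]$, with the right behaviour at the two ends.

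Near $p$, on $\{\rho\le\frac12\}$, I simply take $g$ to be the Euclidean metric $d\rho^{2}+\rho^{2}g_{\mathrm{can}}$, which is $C^{\infty}$ across the origin. Near $\partial\Delta$ I use the collar neighbourhood theorem to choose a $C^{r}$ collar of $\partial\Delta$ in $M$ whose outward half is the $g_{0}$-normal collar, so that on that side $g_{0}=dt^{2}+\gamma_{t}$ in Fermi coordinates ($t\ge0$, $\{t=0\}=\partial\Delta$); I extend the family $\gamma_{t}$ to slightly negative $t$ by a standard jet extension agreeing with $\gamma_{t}$ to the available order at $t=0$, and I choose $\Phi$ so that on the inner collar it carries $(\theta,t)$ to the polar coordinate $(\theta,\rho)$ with $\rho=1+t$ and the normal direction to $\partial_{\rho}$ (such a $\Phi$ exists since one may extend any prescribed product-type collar diffeomorphism to a diffeomorphism of the whole ball, keeping $\Phi(p)=0$). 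Then near $\rho=1$ the transported metric is already $d\rho^{2}+\gamma_{\rho-1}$, i.e. of warped form with $G_{\rho}=\gamma_{\rho-1}$ positive definite and $G_{1}=g_{0}|_{\partial\Delta}$, and it matches $g_{0}$ together with its derivatives along $\partial\Delta$ by construction. It remains to interpolate on $\{\frac12\le\rho\le1-\delta\}$: after extending each of the families $\rho^{2}g_{\mathrm{can}}$ and $\gamma_{\rho-1}$ to a positive definite family over the whole interval, put $G_{\rho}=(1-\chi(\rho))\rho^{2}g_{\mathrm{can}}+\chi(\rho)\gamma_{\rho-1}$ with a smooth cutoff $\chi$ that is $\equiv0$ near $\rho=\frac12$, $\equiv1$ near $\rho=1-\delta$, and flat at both ends. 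Since at each point of $S^{n-1}$ the positive definite symmetric $2$-tensors form a convex cone, $G_{\rho}$ stays positive definite throughout, and the three pieces join $C^{\infty}$.

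Pushing $g=d\rho^{2}+G_{\rho}$ back to $\Delta$ by $\Phi^{-1}$ and keeping $g_{0}$ on the complement of the interior of $\Delta$ then produces a metric on $M$ of the stated regularity which equals $g_{0}$ outside $\Delta$. Because the coefficient of $d\rho^{2}$ is identically $1$ in all three regions, every radial curve from $p$ is a unit-speed geodesic defined up to $\rho=1$; hence $\exp_{p}$ is defined on the whole closed unit ball of $T_{p}M$ and, identifying $T_{p}M$ with $\R^{n}$ via $d\Phi_{p}$, equals $\Phi^{-1}$ there. As $\Phi$ is a $C^{r}$ diffeomorphism onto $\overline{B^{n}}$ and $G_{\rho}$ is non-degenerate for every $\rho\in[0,1]$, this exhibits $\exp_{p}$ as a $C^{r}$ diffeomorphism of the unit ball in $T_{p}M$ onto $\Delta$, which is the assertion. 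The one genuinely delicate point is the glueing along $\partial\Delta$: one must make the inner extension of $\gamma_{t}$, the choice of $\Phi$ on the two collars, and the cutoff $\chi$ mutually compatible so that the assembled metric attains the full regularity required here (the maximal differentiability at the exceptional points, $C^{\infty}$ away from them); this is exactly what the Fermi normal form together with a jet extension provide, everything else being elementary.
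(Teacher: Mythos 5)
Your construction is correct, and it is essentially the method the paper itself invokes: the paper gives no argument beyond the remark that the statement follows by the same method as Theorem C of Weinstein \cite{W}, and your polar/warped-product metric --- Euclidean near $p$, matched to the Fermi form $dt^{2}+\gamma_{t}$ of the outside metric along a collar of $\partial\Delta$, interpolated by a cutoff through the convex cone of positive definite symmetric forms, so that the radial curves are unit-speed geodesics and $\exp_{p}$ coincides with $\Phi^{-1}\circ d\Phi_{p}$ on the unit ball --- is exactly that construction written out in detail. The only point to phrase more carefully is the parenthetical claim that \emph{any} prescribed product-type collar diffeomorphism extends to a diffeomorphism of the whole ball (extending a \emph{given} boundary identification can fail in the smooth category); what you actually need, and what does hold by uniqueness of collars, is that \emph{some} $C^{r}$ diffeomorphism $\Phi:\Delta\to \overline{B^{n}}$ with $\Phi(p)=0$ carries your two-sided collar to the standard radial collar for a suitable identification of $\partial\Delta$ with $S^{n-1}$.
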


This proposition can be proved by exactly the same method as Theorem C in \cite{W}.

We reach the final stage of our construction. 

Recall that an $n$-sphere $\Sph ^{n}$ can be always constructed topologically by gluing together the boundaries of a pair of $n$-balls $(\Delta_{1},\Delta_{2})$ provided they have opposite orientations. The boundary of an $n$-ball $\Delta_{i}$ is an $(n-1)$-sphere $\partial \Delta_{i}\simeq \Sph^{n-1}$, and these two $(n-1)$-spheres are to be identified, for $i=1,2$.

In other words, if we have a pair of $n$-balls of the same size, we superpose them so that their $(n-1)$-spherical boundaries match, and let matching pairs of points on the pair of $(n-1)$-spheres be identically equivalent to each other $\partial \Delta_{1}\equiv \partial \Delta_{2}$. In analogy with the case of the 2-sphere, the gluing surface, that is an $(n-1)$-sphere subset of $\Sph^{n}$, can be called \lq\lq equatorial sphere''. Obviously, the interiors of the original $n$-balls are not glued to each other but they cover the ``exterior'' surface of $\Sph^{n}=\Delta_{1}\bigcup \Delta_{2}$. 

Keeping this topological construction in mind, we will glue together the $n$-ball $\widetilde D$ constructed above with a new $n$-dimensional ball $\Delta$ by identifying $\partial \Delta$ with the hypersurface $H$ through a $C^{k}$ diffeomorphism. By this construction we obtain an $n$-sphere $\Sph^{n}$ whose equatorial sphere is $\partial \Delta\equiv \partial \widetilde D\simeq \Sph^{n-1}$. Moreover, the infinite tree $IT\subset \widetilde D$ lies down on the surface $\Sph^{n}$, more precisely, in the open region cut off by the equatorial sphere and covered by the interior of $\widetilde D$. By construction, all the cut points of $IT$ are at a certain distance (the $\varepsilon$-dilatation) from the  equatorial sphere. Therefore, $IT\subset \Sph^{n}$, but $IT \cap \bar{\Delta}=\emptyset$, where  $\bar{\Delta}$ is the closure of $\Delta$. 

Finally, by applying the Proposition \ref{Ck-Weinstein} to this $n$-sphere $M=\Sph^{n}$, in the case $r=k-1$ with the supplementary condition of $k$-differentiability (see Proposition \ref{deg of diff}), and asking $n=\frac{3^{k+1}}{2}+1$,
we obtain a new Riemannian metric $g$ on $\Sph^{n}$, metric that coincides on $\Sph^{n}\setminus (interior\ of\  \Delta)$ with the initial flat metric defined on $\widetilde D$,  and a point $p\in (interior\ of\  \Delta)\subset \Sph^{n}$ whose cut locus is $IT$. Obviously, the geodesics of $(M,g)$ starting from $p$ must cross the equatorial sphere before hitting the cut locus $IT$.

Of course, this Riemannian structure can not be $C^{\infty}$ because this would contradict with the main result in \cite{IT}, namely that the Hausdorff dimension of the cut locus of any point on a $C^{\infty}$ Riemannian manifold must be an integer.  

Also we remark that $k\to \infty$ would lead to $n\to \infty$ and hence our $n$-sphere must be infinite dimensional, but this is not allowed (see Proposition \ref{prop 3.1} and Corollary \ref{cor 3.2}).


The Theorem \ref{theorem A} is now proved.

\section{Randers metrics: a ubiquitous family of Finsler structures}\label{sec:5}

Let us recall that  a {\it Finsler manifold} $(M,F)$ is a $n$-dimensional differential manifold $M$ endowed with a norm 
$F:TM\to [0,\infty)$ such that
\begin{enumerate}
\item $F$ is positive and differentiable;
\item $F$ is 1-positive homogeneous, i.e. $F(x,\lambda y)=\lambda F(x,y)$, $\lambda>0$, $(x,y)\in TM$;
\item the Hessian matrix $g_{ij}(x,y):=\dfrac{1}{2}\dfrac{\partial^2 F^2}{\partial y^i\partial y^j}$ is positive definite on $\widetilde{TM}:=TM\setminus\{0\}$.
\end{enumerate}

The Finsler structure is called {\it absolute homogeneous} if $F(x,-y)=F(x,y)$ because this leads to the homogeneity condition $F(x,\lambda y)=|\lambda| F(x,y)$, for any $\lambda\in \mathbb R$. 

By means of the Finsler fundamental function $F$ one defines the {\it indicatrix bundle} (or the Finslerian {\it unit sphere bundle}) by $SM:=\cup_{x\in M}S_xM$, where $S_xM:=\{y\in M\ :\ F(x,y)=1\}$. 

On a Finsler manifold $(M,F)$ one can easily define the integral length of curves as follows. Let $\gamma:[a,b]\to M$ be a regular piecewise $C^{\infty}$ curve in $M$, and let $a:=t_0<t_1< \dots < t_k:=b$ be a partition of $[a,b]$ 
such that $\gamma_{|[t_{i-1},t_i]}$ is smooth for each interval $[t_{i-1},t_i]$, $i\in\{1,2,\dots,k\}$. The 
{\it integral length} of $\gamma$ is given by
\begin{equation}\label{integral length}
L(\gamma):=\sum_{i=1}^k\int_{t_{i-1}}^{t_i}F(\gamma(t),\dot\gamma(t))dt,
\end{equation}
where $\dot\gamma=\dfrac{d\gamma}{dt}$ is the tangent vector along the curve $\gamma_{|[t_{i-1},t_i]}$.

For such a partition, let us consider the regular piecewise $C^\infty$ map
\begin{equation}
\bar \gamma:(-\varepsilon,\varepsilon)\times[a,b]\to M,\quad (u,t)\mapsto \bar\gamma(u,t)
\end{equation}
such that $\bar\gamma_{|(-\varepsilon,\varepsilon)\times [t_{i-1},t_i]}$ is smooth for all $i\in\{1,2,\dots,k\}$, and
$\bar\gamma(0,t)=\gamma(t)$. Such a curve is called a regular piecewise $C^\infty$ {\it variation} of the base curve $\gamma(t)$, and the vector field 
$U(t):=\dfrac{\partial\bar\gamma}{\partial u}(0,t)$ is called the {\it variational vector field} of $\bar\gamma$. The integral length of $\bar\gamma(u,t)$ will be a function of $u$, defined as in \eqref{integral length}.

By a straightforward computation 
one obtains
\begin{equation}\label{arbitrary first variation}
\begin{split}
L'(0)=& g_{\dot\gamma(b)}(\gamma,U)|_a^b+\sum_{i=1}^k\Bigl[g_{\dot\gamma(t_i^-)}(\dot\gamma(t_i^-),U(t_i))-
g_{\dot\gamma(t_i^+)}(\dot\gamma(t_i^+),U(t_i))\Bigl]\\
& -\int_a^bg_{\dot\gamma}(D_{\dot\gamma}{\dot\gamma},U)dt,
\end{split}
\end{equation}
where $D_{\dot\gamma}$ is the covariant derivative along $\gamma$ with respect to the Chern connection and $\gamma$ is arc length parametrized 
(see \cite{BCS}, p. 123, or \cite{S}, p. 77 for details of this computation as well as for the basis on Finslerian connections).  

A regular $C^\infty$ piecewise curve $\gamma$ on a Finsler manifold is called a {\it geodesic} if $L'(0)=0$ for all piecewise 
$C^\infty$ variations of $\gamma$ that keep its end points fixed.  
In terms of Chern connection a unit speed geodesic is characterized by the condition $D_{\dot\gamma}{\dot\gamma}=0$.

Using the integral length of a curve, one can define the Finslerian distance between two points on $M$. For any two points $p$, $q$ on $M$, let us denote by $\Omega_{p,q}$ the set of all piecewise $C^\infty$ curves $\gamma:[a,b]\to M$ such that $\gamma(a)=p$ and $\gamma(b)=q$. The map
\begin{equation}
d:M\times M\to [0,\infty),\qquad d(p,q):=\inf_{\Omega_{p,q}}L(\gamma)
\end{equation}
gives the {\it Finslerian distance} on $M$. It can be easily seen that $d$ is in general a quasi-distance, i.e. it has the properties
\begin{enumerate}
\item $d(p,q)\geq 0$, with equality if and only if $p=q$;
\item $d(p,q)\leq d(p,r)+d(r,q)$, with equality if and only if $p$, $q$, $r$ are on the same geodesic segment (triangle inequality).
\end{enumerate}

In the case when $(M,F)$ is absolutely homogeneous, the symmetry condition $d(p,q)=d(q,p)$ holds and therefore $(M,d)$ is a genuine metric space. We do not assume this symmetry condition in the present paper.

A Randers metric on an $n$-differential manifold $M$ is a special Finsler metric $(M,F:=\alpha+\beta)$ obtained obtained by a deformation of a Riemannian metric $a:=a_{ij}(x)dx^{i}\otimes dx^{j}$ by a one-form $\beta:=b_{i}(x)dx^{i}$. The resulting Finslerian norm $F:TM\to [0,\infty)$ is given by
\begin{equation}
F(x,y):=\alpha(x,y)+\beta(x,y)=\sqrt{a_{ij}(x)y^{i}y^{j}}+b_{i}(x)y^{i}, \quad (x,y)\in TM.
\end{equation}

It is well known that imposing the condition $b^{2}:=a(b,b)<1$ is enough to ensure that this $F$ is a positive definite Finsler structure in the usual sense (see \cite{BCS} for details).

The geodesic equations of $F$ can be expressed in terms of $\alpha$ and $\beta$, but we don't need to do this. 

The following results are well known (see \cite{BCS}):
\begin{proposition}
Let $(M,F=\alpha+\beta)$ be a Randers space. The 1-form $\beta$ is closed if and only if the geodesics of the Randers metric $F$ coincide with the geodesics of the underlying Riemannian structure as point sets.
\end{proposition}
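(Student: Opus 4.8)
The plan is to exploit the additive splitting of the Randers length functional. For any piecewise $C^\infty$ curve $\gamma:[a,b]\to M$, the very definition \eqref{integral length} together with $F=\alpha+\beta$ gives
\[
L_F(\gamma)=\int_a^b\alpha(\gamma,\dot\gamma)\,dt+\int_a^b\beta(\gamma,\dot\gamma)\,dt=L_\alpha(\gamma)+\int_\gamma\beta .
\]
I will then combine this with the standard principle that a curve is a geodesic of a given length structure precisely when it is a critical point of the corresponding length functional among fixed-endpoint variations, this critical-point property being independent of the parametrization because $L_F$ and $L_\alpha$ are both invariant under orientation-preserving reparametrization (by the $1$-homogeneity of $F$ and of $\alpha$).

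The first technical step is the first-variation identity for the term $\int_\gamma\beta$. Given a fixed-endpoint piecewise $C^\infty$ variation $\bar\gamma(u,t)$ with variation field $U=\partial_u\bar\gamma|_{u=0}$, Cartan's formula $d\beta(\partial_u,\partial_t)=\partial_u(\beta(\partial_t))-\partial_t(\beta(\partial_u))$ together with $[\partial_u,\partial_t]=0$, integrated in $t$ and using $U(a)=U(b)=0$, yields
\[
\frac{d}{du}\Big|_{u=0}\int_a^b\beta(\bar\gamma,\partial_t\bar\gamma)\,dt=\int_a^b d\beta(U,\dot\gamma)\,dt .
\]
Adding this to the Riemannian first variation of $L_\alpha$ (equivalently, reading off the Finslerian first variation \eqref{arbitrary first variation} for $F$ and comparing), the full first variation of $L_F$ along $\gamma$, with fixed endpoints, equals the first variation of $L_\alpha$ plus $\int_a^b d\beta(U,\dot\gamma)\,dt$.

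For the "if" direction, if $\beta$ is closed then $d\beta=0$, the extra term vanishes for every fixed-endpoint variation, and hence $\gamma$ is a critical point of $L_F$ if and only if it is a critical point of $L_\alpha$; in view of reparametrization invariance this says exactly that the $F$-geodesics and the $a$-geodesics have the same point sets. For the "only if" direction I would argue by contradiction: assume the two geodesic families coincide as point sets but $(d\beta)_{x_0}\neq 0$ for some $x_0$, so there are $v,w\in T_{x_0}M$ with $d\beta(w,v)\neq 0$. Take the unit-speed $a$-geodesic $\gamma$ with $\dot\gamma(t_0)=v/|v|_\alpha$ and a fixed-endpoint variation field $U$ supported near $t_0$ with $U(t_0)$ pointing along $w$. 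Since the image of $\gamma$ is simultaneously an $a$-geodesic and (after $F$-arclength reparametrization) an $F$-geodesic, both first variations vanish, forcing $\int_a^b d\beta(U,\dot\gamma)\,dt=0$; shrinking the support of $U$ makes this integral approximate a nonzero multiple of $d\beta(w,v)$, a contradiction, so $\beta$ must be closed.

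The only delicate point — the main obstacle — is the parametrization bookkeeping: an $F$-geodesic carries its $F$-arclength parametrization whereas an $a$-geodesic carries its $a$-arclength one, so "coincide as point sets" has to be translated carefully into a statement about critical points of the parametrization-independent functionals $L_F$ and $L_\alpha$, and one must check that the corners allowed in piecewise-smooth competitors and the localization of $U$ in the converse argument are handled by admissible variations. All of this is routine once reparametrization invariance is invoked and the first-variation formula \eqref{arbitrary first variation} from \cite{BCS}, \cite{S} is used for the $\alpha$-part.
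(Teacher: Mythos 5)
Your proof is correct, but note that the paper itself offers no proof of this proposition at all: it is stated as ``well known'' with a citation to \cite{BCS}, where the usual argument is a computation of the geodesic spray coefficients of $F=\alpha+\beta$ in terms of those of $\alpha$, showing that the difference is proportional to $y^i$ (i.e.\ the two metrics are projectively equivalent) precisely when the curl of $\beta$ vanishes. Your route is genuinely different and arguably more transparent: you use the additive splitting $L_F=L_\alpha+\int_\gamma\beta$, the reparametrization invariance of both length functionals, and the first-variation identity $\frac{d}{du}\big|_{u=0}\int_a^b\beta(\partial_t\bar\gamma)\,dt=\int_a^b d\beta(U,\dot\gamma)\,dt$ for fixed-endpoint variations, which reduces the whole statement to whether the extra term $\int_a^b d\beta(U,\dot\gamma)\,dt$ vanishes identically. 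This buys a coordinate-free, self-contained proof that needs no connection theory beyond the first-variation formula \eqref{arbitrary first variation}, at the cost of having to be careful about parametrizations and about upgrading ``critical point of a length functional'' to ``geodesic'', which you correctly flag.

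One step is phrased loosely in the converse direction: shrinking the support of $U$ does not make $\int_a^b d\beta(U,\dot\gamma)\,dt$ ``approximate a nonzero multiple of $d\beta(w,v)$'' --- as written the integral tends to $0$. What you actually need (and what your setup delivers) is that for a bump-shaped $U$ with sufficiently small support around $t_0$ the integrand $d\beta(U(t),\dot\gamma(t))$ has a fixed sign, so the integral is nonzero, contradicting criticality. Cleaner still: vanishing of $\int_a^b d\beta(U,\dot\gamma)\,dt$ for \emph{all} fixed-endpoint $U$ gives $i_{\dot\gamma}d\beta=0$ along $\gamma$ by the fundamental lemma of the calculus of variations, and letting the initial direction $v$ range over $T_{x_0}M$ yields $d\beta(v,w)=0$ for all $v,w$, hence $d\beta=0$ at $x_0$. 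With that repair the argument is complete; you should also note that for a non-reversible Randers metric the $F$-geodesic parametrizing the image of $\gamma$ might reverse orientation, but this only changes $\dot\gamma$ by a negative factor and does not affect the conclusion $i_{\dot\gamma}d\beta=0$.
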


In other words, the Randers space $(M,F=\alpha+\beta)$ has reversible geodesics (see \cite{SS} for details). 

\begin{corollary}
Let $(M,\alpha)$ be a Riemannian space form and $\beta$ a 1-form on $M$. The 1-form $\beta$ is closed if and only if the Randers metric $(M,F=\alpha+\beta)$ is projectively flat. 
\end{corollary}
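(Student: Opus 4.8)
The plan is to combine the preceding proposition with the classical fact that a Riemannian space form, being of constant sectional curvature, is projectively flat in the sense of Beltrami. First I would recall the statement to be proven: with $(M,\alpha)$ a Riemannian space form and $\beta$ a one-form on $M$, the Randers metric $F=\alpha+\beta$ is projectively flat if and only if $\beta$ is closed. The forward implication is the substantive direction, and the converse follows almost immediately from the proposition.

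For the direction ``$\beta$ closed $\Rightarrow$ $F$ projectively flat'': by the proposition, if $d\beta=0$ then the geodesics of $(M,F)$ coincide, as unparametrized point sets, with the geodesics of the underlying Riemannian metric $\alpha$. But $(M,\alpha)$ is a space form, hence of constant sectional curvature, and by Beltrami's theorem every metric of constant curvature is projectively flat, i.e. around each point there is a coordinate chart in which the $\alpha$-geodesics are straight line segments (for the sphere one uses the central/gnomonic projection, for hyperbolic space the Klein model, for Euclidean space the identity). In such a chart the $F$-geodesics, having the same images, are also straight lines, so $F$ is projectively flat. I would phrase this as: $F$ is projectively flat $\iff$ its geodesics are straightenable $\iff$ the $\alpha$-geodesics are straightenable (using $d\beta=0$) $\iff$ $(M,\alpha)$ has constant curvature, which holds by hypothesis.

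For the converse ``$F$ projectively flat $\Rightarrow$ $\beta$ closed'': if $F$ is projectively flat, then in suitable local coordinates the $F$-geodesics are straight line segments. A space form also admits coordinates in which its $\alpha$-geodesics are straight line segments, and a projective change of coordinates preserves straightness of curves; so one can arrange that both the $\alpha$-geodesics and the $F$-geodesics are straight lines in a common chart, whence they coincide as point sets. Then the proposition (its ``only if'' direction) forces $\beta$ to be closed.

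The main obstacle here is purely expository rather than technical: one must be careful that ``projectively flat'' is being used consistently for both the Riemannian metric $\alpha$ and the Finsler metric $F$ (namely, local straightenability of geodesic images), and one should cite the appropriate place in \cite{BCS} both for Beltrami's theorem in the Finsler setting and for the fact that a Riemannian space form is projectively flat. Since the geodesics-coincide-as-point-sets equivalence is already packaged in the preceding proposition, there is essentially no computation to perform; the proof is two short sentences invoking the proposition and Beltrami's theorem.
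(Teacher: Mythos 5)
Your forward direction is fine and is the natural route: with $d\beta=0$ the preceding proposition gives that the $F$-geodesics and the $\alpha$-geodesics coincide as point sets, and since a space form is projectively flat by Beltrami's theorem, the common geodesics are straight lines in a suitable local chart, so $F$ is projectively flat. (The paper itself offers no argument here beyond citing \cite{BCS}, so there is nothing to compare against on this side.)

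The converse, however, has a genuine gap. The step ``one can arrange that both the $\alpha$-geodesics and the $F$-geodesics are straight lines in a common chart'' does not follow from knowing separately that $F$ and $\alpha$ are projectively flat: the chart that straightens the $F$-geodesics and the chart that straightens the $\alpha$-geodesics are a priori related by an arbitrary diffeomorphism, which need not carry lines to lines, and nothing in your argument uses the Randers relation $F=\alpha+\beta$ at this point. Indeed, for two unrelated projectively flat metrics on the same domain the conclusion is simply false: push a projectively flat metric forward by a non-projective diffeomorphism; it remains projectively flat, yet its geodesics are no longer the straight lines of the original chart, so its geodesics need not coincide with those of the flat metric. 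In your setting the coincidence of geodesics is essentially equivalent to $d\beta=0$, which is what you are trying to prove, so the argument as written is circular. The standard way to close the gap is a computation, not a chart argument: either compare the geodesic spray of $F=\alpha+\beta$ with that of $\alpha$ and show that the Hamel/Rapcs\'ak equations characterizing local projective flatness force $d\beta=0$, or invoke the fact that a (locally) projectively flat Finsler metric is of Douglas type together with the theorem of B\'acs\'o--Matsumoto that a Randers metric is Douglas if and only if $\beta$ is closed. Either of these supplies the missing implication ``$F$ projectively flat $\Rightarrow d\beta=0$''; your two-sentence reduction to the proposition only works once that ingredient is in place.
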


\begin{remark}
The Randers metrics can be described as the deformation of the Riemannian metric $a$ by means of a magnetic field specified by the 1-form $\beta$.

\end{remark}
\subsection{A Randers metric on the $n$-ball $\mathcal B^{n}$}

Let us consider the 2-ball $\mathcal B^{2}:=\{(x,y)\ |(x,y)|\leq 1\}\subset \R^{2}$, where $|\cdot|$ is the usual Euclidean norm, and introduce polar coordinates $(r,\theta)$, where $r=\sqrt{x^{2}+y^{2}}$ and 
\begin{equation}
x=r\cos\theta \qquad y=r\sin\theta.
\end{equation}

We will denote the coordinates of that tangent space at a point to $\mathcal B^{2}$ by 
$(r,\theta;x,y)\in T\mathcal B^{2}$ regarding $(r,\theta)$ as coordinates on the base manifold $\mathcal B^{2}$ and $(x,y)\in T_{(r,\theta)}\mathcal B^{2}$ the fiber coordinates.

The inward geodesics with the start point on the 2-ball $\mathcal B^{2}$ endowed with the Euclidean metric are rays that gather in the origin $O\in \R^{2}$ and since are all Euclidean unit length, the inward cut locus of the boundary $\partial \mathcal B^{2}=\Sph^{1}$ is $O$.

 In polar coordinates, a fixed  inward geodesic of $\mathcal B^{2}$, namely a ray
 $\rho$ through the origin $O$, is given by
$\rho(t)=(1-r(t),\theta_{0})$
where $r: [0,t_0]\to [0,1]$ are the usual rays from the origin, $\theta_{0}$ is a constant, namely the angle of the ray with $Ox$ axis. The tangent vector to the inward ray is $\dot \rho=-(\dot r(t),0)$ and the ray $\rho(t)$
satisfies the geodesic equation for the flat Euclidean metric 
\begin{equation}\label{Eucludean in polar coordinates}
a_{ij}=\begin{pmatrix}
1 & 0 \\
0 & r^2
\end{pmatrix}.
\end{equation}

The Riemannian length of the tangent vector $\dot\rho$ is $\alpha(\dot\rho(t))=\dot r(t)$, and the Riemannian length of the ray is
\begin{equation}\label{Riem length}
\int_0^{t_0}\alpha(\rho(t),\dot\rho(t))dt=\int_0^{t_0}\dot r(t)dt=r(t_0)-r(0)=1-0=1
\end{equation}
as expected.

We are going to construct a magnetic field $\beta=b_1(r,\theta)dr+
b_2(r,\theta)d\theta$ acting along the inward rays $\alpha$-orthogonal to the boundary $\partial \mathcal B^{2}$. Moreover, we will ask for this magnetic field to vanish at both ends of the inward rays. 

We start with a Lemma.

\begin{lemma}\label{lemma for h}
There exists an even, non constant $C^{\infty}$ function $h:[-1,1]\to [0,1)$ such that $h(-1)=0=h(1)$.
\end{lemma}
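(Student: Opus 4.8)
The plan is to exhibit such a function explicitly, since the statement only asks for existence and the constraints are mild: $h$ must be smooth on $[-1,1]$, even, non-constant, take values in $[0,1)$, and vanish at the endpoints $\pm 1$. The natural candidate is a rescaled standard bump. Recall the function $\varphi(t) = e^{-1/t}$ for $t>0$ and $\varphi(t)=0$ for $t\le 0$, which is already introduced in the paper; from it one builds $\psi(s) := \varphi(1-s^2)$, i.e.
\begin{equation*}
\psi(s) := \begin{cases} e^{-1/(1-s^2)}, & |s|<1\\ 0, & |s|\ge 1.\end{cases}
\end{equation*}
This $\psi$ is the classical $C^\infty$ bump: it is smooth everywhere (all derivatives vanish as $|s|\to 1^-$ faster than any power, so the pieces glue smoothly), it is manifestly even since it depends on $s$ only through $s^2$, it is non-constant, it vanishes at $s=\pm 1$, and on $[-1,1]$ it attains its maximum $\psi(0)=e^{-1}$.

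First I would set $h := \psi$ restricted to $[-1,1]$. Then I would verify each required property in turn. Smoothness on the open interval is clear; smoothness at the endpoints follows from the standard fact that $\varphi$ is $C^\infty$ with all derivatives zero at $0$, so $\psi$ and all its derivatives extend continuously by $0$ at $s=\pm 1$ (this is exactly the mollifier computation and can be cited rather than redone). Evenness is immediate from the form $\varphi(1-s^2)$. Non-constancy is witnessed by $h(0)=e^{-1}>0=h(1)$. Finally, for the range: on $[-1,1]$ we have $0 \le 1-s^2 \le 1$, hence $h(s)=\varphi(1-s^2)$ satisfies $0 \le h(s) \le \varphi(1) = e^{-1} < 1$, so indeed $h$ maps into $[0,1)$. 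The endpoint conditions $h(-1)=h(1)=0$ hold by construction.

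There is essentially no obstacle here: the only point requiring a word of care is the smoothness of $\psi$ at $\pm 1$, which is the well-known gluing property of the exponential bump and needs no separate argument beyond a citation. If one prefers an even more elementary witness one could also take $h(s) = c(1-s^2)^{m}$ for a large integer $m$ and a small constant $c\in(0,1)$, which is only finitely differentiable in general — but since the paper wants $C^\infty$ (and will later want the magnetic field built from $h$ to be $C^\infty$ wherever possible), the bump function $\varphi(1-s^2)$ is the cleanest choice and I would present that. A brief remark that one may additionally normalize or compose with a cutoff to make $h$ vanish in a neighborhood of $\pm 1$ (useful for the subsequent construction of $\beta$) could be appended, but is not needed for the bare statement.
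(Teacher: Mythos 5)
Your proof is correct: the rescaled standard bump $h(s)=\varphi(1-s^{2})=e^{-1/(1-s^{2})}$ (extended by $0$) is $C^{\infty}$, even, non-constant, vanishes at $\pm 1$, and has maximum $e^{-1}<1$, so it satisfies every requirement of the lemma. The paper's own witness is built differently: it takes the smooth step function $g(t)=\varphi(t)/(\varphi(t)+\varphi(1-t))$ and sets $h$ equal to a small multiple of $g(t/\delta+2)\,g(-t/\delta+2)$ on $(-\delta,\delta)$ and identically $0$ on $[-1,-\delta]\cup[\delta,1]$. The substantive difference is the support: the paper's $h$ vanishes on whole neighborhoods of the endpoints, not merely at the endpoints, and this is precisely what is exploited later when $h$ is used to define the magnetic one-form $\beta$ --- the form must be zero near the boundary and near the center so that the inward rays stay $F$-orthogonal to the boundary and the cut-locus argument goes through. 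Your bump is strictly positive arbitrarily close to $\pm 1$, so for the later construction you would indeed need the cutoff modification you mention in your closing remark; for the bare statement of the lemma, however, your construction is complete and arguably cleaner, since evenness and the bound $h<1$ are immediate, whereas the paper has to tune the constant $c$ to keep the maximum below $1$.
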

\begin{proof}
Choose any constants $c\in (0,30.05)$ and $\delta\in (0,1)$. 

The function 

\begin{equation}
h(x)=
\begin{cases}
0, & -1\leq t\leq -\delta\\
cg(\frac{t}{\delta}+2)g(-\frac{t}{\delta}+2), & -\delta<t<\delta\\
0, & \delta\leq t\leq 1
\end{cases}
\end{equation}
has the desired properties, where $g$ is the function 
\begin{equation}
g:(-3,3)\setminus \{0,1]\to [0,1],\qquad g(t):=\frac{\varphi(t)}{\varphi(t)+\varphi(1-t)}.
\end{equation}

Remark that the maximum of $h$ is $h(0)=0.033c<1$, for $c$ chosen as above.

 
\begin{center}
\includegraphics[height=7cm, angle=0]{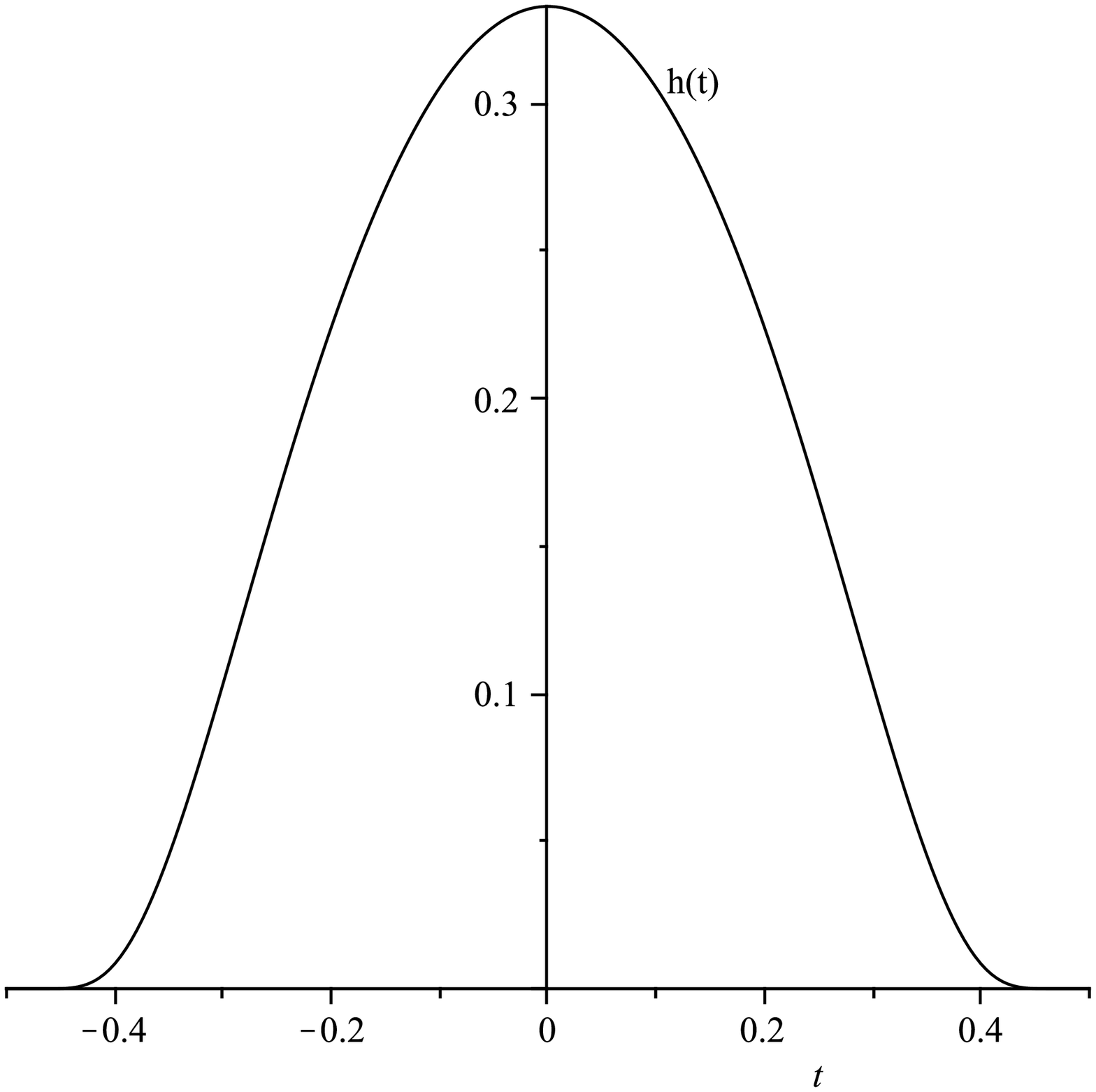}
\end{center}

{\bf Figure 11.} The graph of function $h$ for $c=10$ and $\delta=\frac{1}{2}$.

$\qedd$
\end{proof}

\begin{remark}\label{rem for h}
The function $h$ in Lemma \ref{lemma for h} can be actually defined on any finite interval $[a,b]\subset \R$ and extended to a function  $h:[a,b]\to [0,1)$ with $h(a)=h(b)=0$. 
\end{remark}

We consider the magnetic field defined by the 1-form $\beta=b_1(r,\theta)dr+
b_2(r,\theta)d\theta$ given by
\begin{equation}\label{simple magnetic field}
b_1=h(r),\qquad b_2=0
\end{equation}
where $h:[0,1]\to [0,1)$ is a function constructed as shown in Lemma \ref{lemma for h}.

Then we have
\begin{proposition}\label{Randers properties}
\begin{enumerate}
\item
The fundamental function $F=\alpha+\beta$, where $\alpha$ is given by \eqref{Eucludean in polar coordinates} and $\beta=h(r)dr$, is a positive definite  
Randers metric on $\mathcal B^{2}$. 
\item 
The Randers metric $(\mathcal B^{2},F)$ is projectively flat.
\item
The cut locus of the boundary $\partial \mathcal B^{2}$ with respect to the Randers metric is the origin $O$.
\end{enumerate} 
\end{proposition}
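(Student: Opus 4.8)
The three claims will be established in turn; (1) and (2) are short verifications and (3) carries the content. For \textbf{(1)} the plan is to apply the standard criterion recalled from \cite{BCS}: $F=\alpha+\beta$ is a positive definite Randers metric precisely when $b^{2}:=a^{ij}b_{i}b_{j}<1$ everywhere. By \eqref{Eucludean in polar coordinates} the dual metric is $a^{ij}=\mathrm{diag}(1,r^{-2})$, and by \eqref{simple magnetic field} the components of $\beta$ are $b_{1}=h(r)$, $b_{2}=0$, so $b^{2}=h(r)^{2}\in[0,1)$ since $h$ takes values in $[0,1)$ by Lemma~\ref{lemma for h}. It remains to note that $\beta$, which is manifestly smooth away from $O$, is smooth at $O$ as well: by the construction of Lemma~\ref{lemma for h} and Remark~\ref{rem for h} we may take $h\equiv 0$ near $r=0$, so $\beta\equiv 0$ there and the coordinate singularity of polar coordinates at the origin is harmless. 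For \textbf{(2)} one writes $\beta=h(r)\,dr=dH$ with $H(r):=\int_{0}^{r}h(\tau)\,d\tau$, so $\beta$ is exact, in particular closed; since $(\mathcal B^{2},\alpha)$ carries the flat metric it is a Riemannian space form (constant curvature zero), and the Corollary recalled above (closed $\beta$ $\Leftrightarrow$ projective flatness) gives that $(\mathcal B^{2},F)$ is projectively flat. Equivalently, by the preceding Proposition the $F$-geodesics coincide, as point sets, with the straight Euclidean segments.

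For \textbf{(3)} the key observation is that $H$ depends on $r$ only, hence $H\equiv H(1)$ on $\partial\mathcal B^{2}=\{r=1\}$. Consequently, for every piecewise-$C^{\infty}$ curve $\gamma$ from a boundary point $q$ to an interior point $p$,
\begin{equation*}
L_{F}(\gamma)=L_{\alpha}(\gamma)+\int_{\gamma}\beta=L_{\alpha}(\gamma)+H(|p|)-H(1),
\end{equation*}
whence $d_{F}(\partial\mathcal B^{2},p)=d_{\alpha}(\partial\mathcal B^{2},p)+H(|p|)-H(1)$, and --- this is the crucial point --- a curve from $\partial\mathcal B^{2}$ to $p$ is $F$-minimizing if and only if it is $\alpha$-minimizing, because the correction term depends only on the endpoints. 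The $\alpha$-minimizers issuing from $\partial\mathcal B^{2}$ are exactly the inward radial segments, so the computation of the inward $F$-cut locus reduces to the Euclidean picture.

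Carrying this out: for $p\neq O$ the $\alpha$-minimizer from $\partial\mathcal B^{2}$ to $p$ is the unique radial segment through $p$, which prolongs to the full radial segment from $p/|p|$ to $O$; the latter is again $F$-minimizing, since its $F$-length $\int_{0}^{1}(1-h(r))\,dr$ equals $d_{F}(\partial\mathcal B^{2},O)=1-\int_{0}^{1}h(r)\,dr$. Thus $p$ lies in the interior of a minimizing $F$-geodesic from $\partial\mathcal B^{2}$, so $p\notin\cC_{F}(\partial\mathcal B^{2})$. At $O$, by contrast, there are infinitely many $F$-minimizing radial segments from $\partial\mathcal B^{2}$ (by rotational invariance of both $\alpha$ and $\beta$), $O$ being the common focal point of the inward normal geodesics; hence $O\in\cC_{F}(\partial\mathcal B^{2})$. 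Combining the two cases gives $\cC_{F}(\partial\mathcal B^{2})=\{O\}$.

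The only point that requires genuine care is the non-reversibility of $F$: one must work throughout with the forward Finsler length and distance measured along \emph{inward} curves, and keep track that $F>0$ on the tangent directions that actually occur --- precisely the inequality $h<1$ already used in (1), which also guarantees that the radial $F$-lengths $\int(1-h)\,dr$ are positive. Once this bookkeeping is set up, the identity $L_{F}=L_{\alpha}+\int\beta$ with $\beta$ exact and rotationally symmetric transfers the entire cut-locus analysis to the classical Euclidean situation, so no new obstacle arises.
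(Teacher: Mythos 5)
Your proof is correct and follows essentially the same route as the paper: $h<1$ gives positive definiteness, closedness (indeed exactness) of $\beta$ gives projective flatness via the quoted Corollary, and the fact that $\int_{\gamma}\beta$ depends only on the endpoints (with the potential $H$ constant on $\partial\mathcal B^{2}$) shows all inward rays have the same $F$-length and concentrate at $O$. Your explicit identity $L_{F}(\gamma)=L_{\alpha}(\gamma)+H(|p|)-H(1)$, which identifies the $F$-minimizers from the boundary with the Euclidean radial segments before reading off the cut locus, simply makes precise (and slightly more complete, including the correct sign $1-\int_{0}^{1}h$ for the ray length) the step the paper states more informally.
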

\begin{proof}\begin{enumerate}
\item
Remark that $b=h(r)<1$ and therefore $F$ is a positive definite Randers metric. 

\item Moreover, the 1-form $\beta$ is 
closed by construction, therefore the Randers geodesics coincide with the underlying Riemannian geodesics as set of points and the second statement follows. 

\item For the last statement, 
we remark that near the center $o$ and the boundary $\partial \mathcal B^{2}$ the magnetic field is zero, therefore the geodesic rays $\rho$ are orthogonal to $\partial \mathcal B^{2}$ with respect to the Randers metric $F$ as well.

Taking into account that the Riemannian length of the inward geodesic rays $\rho$ from $\partial \mathcal B^{2}$ with respect to the Riemannian metric $a$ is the same, then 
it can be seen that all inward geodesic rays emanating orthogonal to the boundary $\partial \mathcal B^{2}$ gather in the origin $O$ and they have the same Finslerian length. 

Indeed, let $\rho:[0,t_0]\to \R^2$ a ray through the origin $o\in \R^2$. Then the Finslerian length of the ray $\rho$ is 
\begin{equation}\label{finslerian length}
\mathcal L_F(\rho_{|[0,t_0]})=1+\int_0^1 h(\rho)d\rho
\end{equation}
and it depends only on the Riemannian length of the ray $\rho$.

In order to see this remark that
\begin{equation}
\beta(\rho(t),\dot\rho(t))=-h(1-r(t))\dot r(t).
\end{equation}

Then the integrals
\eqref{Riem length}
and 
\begin{equation}
\int_0^{t_0}\beta(\rho(t),\dot\rho(t))dt=-\int_0^{t_0}f(r)\dot r(t)dt=-\int_0^{r(t_0)}h(r)dr=-\int_0^{l}h(r)dr
\end{equation}
together with the relation 
\begin{equation}
\mathcal L_F(\rho)=\int_0^{t_0}(\alpha(\rho(t),\dot\rho(t))+\beta(\rho(t),\dot\rho(t)))dt
\end{equation}
imply \eqref{finslerian length}.
In other words, the cut locus of the boundary with respect to the Randers metric $F$ is $O$.

\end{enumerate}

$\qedd$
\end{proof}

 This construction can be easily extended to the case of an $n$-dimensional ball $\mathcal B^{n}\in \R^n$ with the Euclidean metric $a$. In this case, 
we can construct by the same procedure as above a Randers metric $(\mathcal B^{n},F=\alpha+\beta)$, whose inward geodesics coincide to the geodesic rays from the boundary $\partial \mathcal B^{n}=\Sph^{n-1}\subset\R^n$ and whose inward cut locus is the center of the sphere $\Sph^{n-1}$.


\section{Proof of Theorem \ref{theorem B}}

%

\quad  As explained already the inward cut locus from the $C^1$-boundary $\partial D_0$ with respect to the usual Euclidean metric in $\R^2$ coincides with the segment $oq$. We are going to construct a Randers metric on the 2-ball $D_0$ whose inward cut locus of the boundary $\partial D_0$ is the same segment $oq$.

\setlength{\unitlength}{1cm} 
\begin{center}
\begin{picture}(8, 7)

\includegraphics[height=7cm, angle=0]{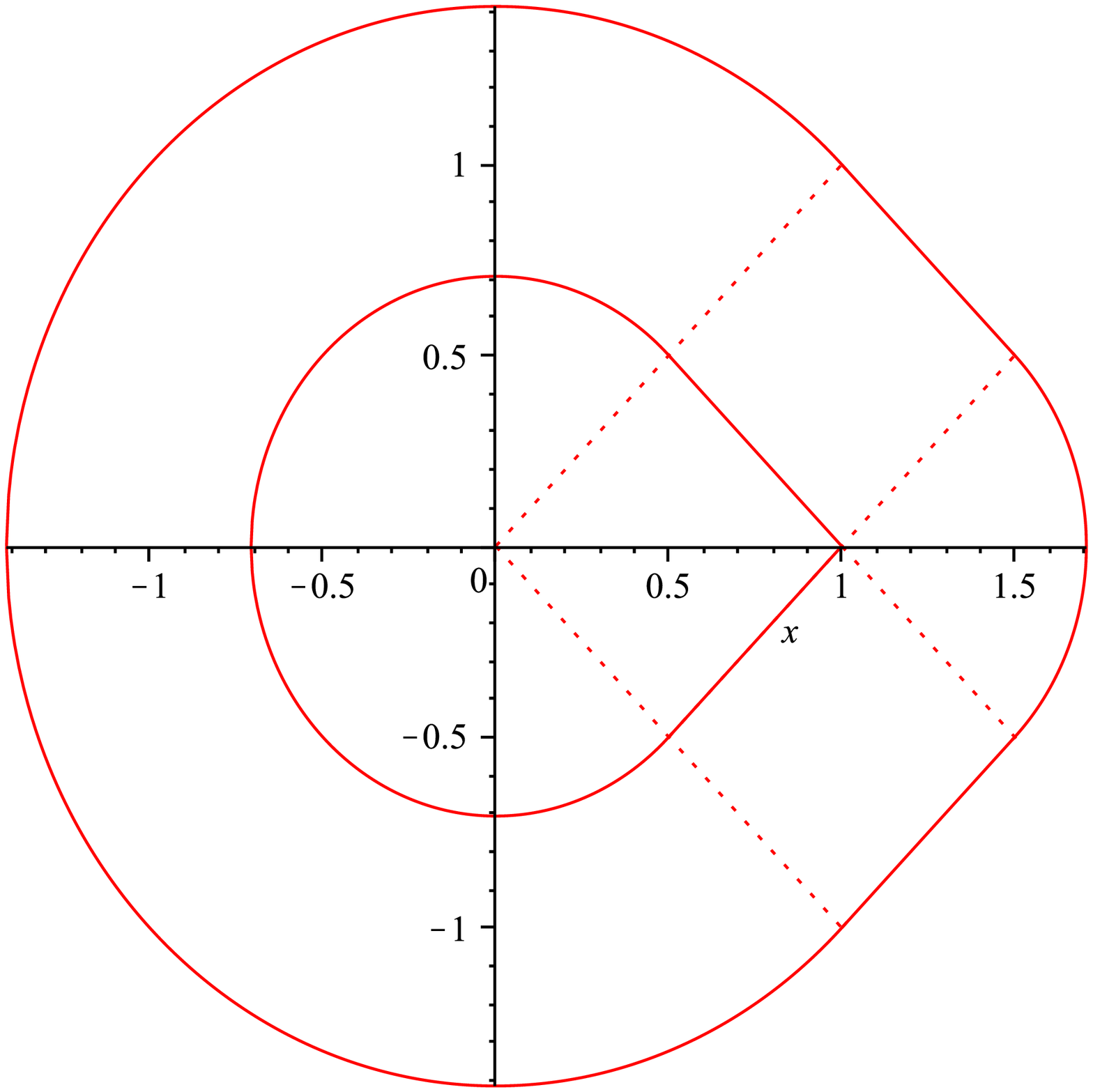}
\put(-5.7,5.7){\vector(1,-1){0.7}}
\put(-1.8,5.7){\vector(-1,-1){0.7}}
\put(-0.7,4.6){\vector(-1,-1){0.7}}
\put(-5.8,1.6){\vector(1,1){0.7}}
\put(-1.9,1.3){\vector(-1,1){0.7}}
\put(-6.4,3.6){\vector(1,0){0.7}}
\put(-0.5,3.6){\vector(-1,0){0.7}}
\put(-3.7,6.2){\vector(0,-1){0.7}}
\put(-3.7,0.7){\vector(0,1){0.7}}
\put(-1.2,5.1){\vector(-1,-1){0.7}}
\put(-0.6,2.6){\vector(-1,1){0.7}}
\put(-1.2,2){\vector(-1,1){0.7}}
\end{picture}
\end{center}

{\bf Figure 12.} The magnetic field defined within the convex ball $H_0$.
\bigskip

In order to write our formulas explicitly in polar coordinates $(r,\theta)$, for the sake of clarity and simplicity, we will consider the circles $S$ and $S'$ to be centred at origin with radius $\sqrt{2}$, and at the point $A(1,0)$ with radius $\frac{\sqrt{2}}{2}$, respectively. We also take $\phi=\frac{\pi}{4}$.

Remark that the 2-ball $D_0$ is made of three regions (compare with Figure 2 and notations in Section 2):
\begin{enumerate}[(1)]
\item $(x,y)=(r_{1}\cos\theta,r_{1}\sin\theta)\in B(o,\sqrt{2})$, where $r_{1}\in [0,\sqrt{2}],\ \theta\in [\frac{\pi}{4},\frac{7\pi}{4}]$, i.e. the large circular sector whose boundary is $P$;
\item $(x,y)=(u+\dfrac{\sqrt{2}}{2}\rho+\dfrac{1-u}{2},\pm\dfrac{\sqrt{2}}{2}\rho\pm\dfrac{1-u}{2})$,
where $\rho\in [0, \dfrac{\sqrt{2}}{2}]$, $u\in[0,1]$, i.e.  a part of the central region whose boundary is $A$;
\item  $(x,y)=(r_{2}\cos\theta,r_{2}\sin\theta)\in B((1,0),\frac{\sqrt{2}}{2})$, where 
$r_{2}\in [0,\frac{\sqrt{2}}{2}],\ \theta\in [-\frac{\pi}{4},\frac{\pi}{4}]$, i.e. the small circular sector whose boundary is $P_0$.
\end{enumerate}

We point out that $(r_{1},\theta)$ and $(r_{2},\theta)$ are polar coordinates in the balls $B(o,\sqrt{2})$ and 
$B((1,0),\frac{\sqrt{2}}{2})$, while $(\rho,u)$ are coordinates on the interior of the squares defined by the points $(\dfrac{1}{2},\dfrac{1}{2})$, $(1,1)$, $(\dfrac{3}{2},\dfrac{1}{2})$, $(1,0)$ and 
$(\dfrac{1}{2},-\dfrac{1}{2})$, $(1,-1)$, $(\dfrac{3}{2},-\dfrac{1}{2})$, $(1,0)$, respectively. Nevertheless, we have $r_{1}=\rho+\frac{\sqrt{2}}{2}$.

We are going to define a magnetic field $\beta$ acting on the (inner) straight rays from the boundaries of each regions as follows.

\begin{equation}
\beta=
\begin{cases}
\beta_{1}:=-h_{1}(r_{1})dr_{1},\quad x\in \textrm{ region (1)}\\
\beta_{2}:=-h_{2}(\rho)d\rho,\quad x\in \textrm{ region (2)}\\
\beta_{3}:=-h_{3}(r_{2})d r_{2},\quad x\in \textrm{ region (3)}
\end{cases}
\end{equation}
defined on $D_{0}$, where $h_{1}:[\frac{\sqrt{2}}{2},\sqrt{2}]\to [0,1)$, $h_{2}:[0,\frac{\sqrt{2}}{2}]\to [0,1)$ and $h_{3}:[0,\frac{\sqrt{2}}{2}]\to [0,1)$ are constructed as shown in Lemma \ref{lemma for h} and Remark \ref{rem for h}.

\setlength{\unitlength}{1cm} 
\begin{center}
\begin{picture}(7, 9)

\includegraphics[height=9cm, angle=0]{dilatation.eps}
\put(-5.52,8){\vector(0,1){0.1}}
\put(-0.3,4.3){\vector(1,0){0.1}}

\put(-7.6,6.7){\vector(1,-1){1}}
\put(-7.6,2.2){\vector(1,1){1}}

\put(-2.8,6){\vector(-1,-1){1}}
\put(-2.8,2.4){\vector(-1,1){1}}

\put(-1.9,3.5){\vector(-2,1){1}}
\put(-1.9,5){\vector(-2,-1){1}}






\put(-8.6,4.5){$\varepsilon$}



\end{picture}
\end{center}

{\bf Figure 13.} The magnetic field extended in $\widetilde{D}_0$.
\bigskip

By naturally extending $\beta$ to the ball $\widetilde{D}_{0}$ with smooth boundary, then it can be easily seen that the Randers metric $(\widetilde{D}_{0},F_{0}=\alpha+\beta_{0})$ is projectively flat and the cut locus of the hypersurface $\partial \widetilde{D}_{0}$ with respect to this Finsler metric is the segment $oq$. This can be easily done by extending the definition domain of the functions $h_{i}$ such that $h_{|\partial \widetilde{D}_{0}}=0$, namely $h_{1}:[\frac{\sqrt{2}}{2},\sqrt{2}+\varepsilon]\to [0,1)$, $h_{2}:[0,\frac{\sqrt{2}}{2}+\varepsilon]\to [0,1)$ and $h_{3}:[0,\frac{\sqrt{2}}{2}+\varepsilon]\to [0,1)$, respectively.

This construction can be extended to arbitrary dimension and repeated iteratively for each depth level $m$ such that we obtain
\begin{proposition}
For each depth level $m$, there is a Randers metric $(\widetilde{D}_{m},F_{m}=\alpha+\beta_{m})$ on the smooth ball $\widetilde{D}_{m}\subset \R^{n}$ with the following properties
\begin{enumerate}
\item $F_{m}$ is projectively flat,
\item the inner cut locus of the boundary $\partial \widetilde{D}_{m}$ with respect to $F_{m}$ is the tree 
$s\cup s_{j_{1}}\cup \dots\cup s_{j_{1}j_{2}\dots j_{m}}$.
\end{enumerate}
\end{proposition}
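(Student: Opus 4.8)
The plan is to build the magnetic field $\beta_m$ on $\widetilde D_m$ by iterating, over every depth level $i\in\{0,1,\dots,m\}$ and every branch $j_1j_2\cdots j_i$, the construction already performed for $\widetilde D_0$, and then to extract both assertions from three properties of the resulting $1$-form: $\beta_m$ is closed, $\beta_m$ vanishes on a neighbourhood of $\partial\widetilde D_m$, and the $a$-norm of $\beta_m$ is everywhere less than $1$.

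First I would recall from Section~\ref{sec:4} the decomposition of $\widetilde D_m$: up to the set $IT_m=s\cup s_{j_1}\cup\cdots\cup s_{j_1j_2\cdots j_m}$, which has Euclidean measure zero, $\widetilde D_m$ is a disjoint union of the ``influence regions'' of the smoothed truncated cones $\widetilde A_{j_1\cdots j_i}$, the caps $\widetilde B_{j_1\cdots j_i}$, and the large cap $\widetilde P$; each such region is foliated by the inward Euclidean straight rays orthogonal to $\partial\widetilde D_m$, these rays are minimising, hence pairwise non-crossing, until they reach their common limit point on $IT_m$, and so the arc-length parameter $\rho$ measured along a ray from its limit point is a well-defined coordinate there. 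On the influence region of each piece I set $\beta:=-h(\rho)\,d\rho$, where $h$ is, in each case, a function produced by Lemma~\ref{lemma for h} and Remark~\ref{rem for h}, defined on the relevant interval of values of $\rho$ and vanishing, together with all its derivatives, at both of its endpoints --- one endpoint lying on $IT_m$ and the other on $\partial\widetilde D_m$ --- and chosen, exactly as in the case $m=0$, so that the pieces defined on neighbouring influence regions agree to infinite order along the interior interfaces between them (these interfaces being cones over the tangent circles $\widetilde c_{j_1\cdots j_i}$, with vertex at a node $q_{j_1\cdots j_i}$, near which all the relevant bump functions vanish to infinite order). The pieces then glue to a single $C^\infty$ $1$-form $\beta_m$ on $\widetilde D_m$; since each piece has the exact form $d(-H(\rho))$ with $H'=h$, $\beta_m$ is closed, and since the amplitude of $h$ is at our disposal (Lemma~\ref{lemma for h}) we may assume $|\beta_m|_a<1$ everywhere, so that $F_m:=\alpha+\beta_m$ is a genuine --- and, as $\beta_m\not\equiv 0$, non-Riemannian --- positive-definite Randers metric that coincides with $\alpha$ near $\partial\widetilde D_m$.

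Assertion (1) is then immediate: $\alpha$ is the flat Euclidean metric, so a Riemannian space form, and $\beta_m$ is closed, whence by the Corollary in Section~\ref{sec:5} the Randers metric $F_m$ is projectively flat. For assertion (2), since $\widetilde D_m$ is a topological ball the closed form $\beta_m$ is exact, say $\beta_m=dg_m$ with $g_m\in C^\infty(\widetilde D_m)$; as $\beta_m\equiv 0$ near the connected hypersurface $\partial\widetilde D_m$, the function $g_m$ is constant there, say $g_m\equiv g_0$. Hence for any $x\in\widetilde D_m$ and any curve $\gamma$ joining a point of $\partial\widetilde D_m$ to $x$ one has $L_{F_m}(\gamma)=L_\alpha(\gamma)+g_m(x)-g_0$, so that $d_{F_m}(x,\partial\widetilde D_m)=d_\alpha(x,\partial\widetilde D_m)+g_m(x)-g_0$, the $F_m$-minimisers from $\partial\widetilde D_m$ to $x$ are precisely the $\alpha$-minimisers, and (because $F_m=\alpha$ near $\partial\widetilde D_m$) these meet $\partial\widetilde D_m$ $F_m$-orthogonally. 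Consequently a point is a cut point of $\partial\widetilde D_m$ for $F_m$ if and only if it is one for the Euclidean metric, so by Section~\ref{sec:4} the inner cut locus of $\partial\widetilde D_m$ with respect to $F_m$ is exactly $s\cup s_{j_1}\cup\cdots\cup s_{j_1j_2\cdots j_m}$.

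The only genuine work is the gluing step of the second paragraph: one must verify that the locally defined forms $-h(\rho)\,d\rho$, on the influence regions of the cones, the caps and the next-level cones, can be made to match to the required order across all the interior interfaces, simultaneously for the $2n-1$ branches and all levels $i\le m$, and that the resulting object is smooth across the lower-dimensional strata of $IT_m$, where $d\rho$ itself is singular but $h(\rho)\,d\rho$ is not, because $h$ vanishes there to infinite order. This is a careful but unsurprising iteration of the cases $m=0$ and $m=1$ already described in the text; if one only wants the statement, it is enough to take for $\beta_m$ any exact $1$-form $dg_m$ with $|dg_m|_a<1$, $g_m$ non-constant and locally constant near $\partial\widetilde D_m$, for which the two arguments above apply verbatim.
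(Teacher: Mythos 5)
Your proposal is correct and, in its construction, essentially the paper's own: the paper also builds $\beta_m$ piecewise as $-h(\,\cdot\,)\,d(\,\cdot\,)$ along the inward-ray foliation of each region (spherical caps, truncated cones, large cap), with $h$ supplied by Lemma \ref{lemma for h} and Remark \ref{rem for h} and extended over the $\varepsilon$-dilatation so that $\beta_m$ vanishes near $\partial\widetilde D_m$, and then simply asserts that the $m=0$ case ``can be extended to arbitrary dimension and repeated iteratively''. Where you genuinely differ is assertion (2): the paper argues ray by ray, using that $\beta_m$ is closed (so $F_m$- and $\alpha$-geodesics coincide as point sets), that $\beta_m$ vanishes at both ends of each ray (preserving orthogonality at the boundary), and that the Finslerian length of a ray depends only on its Euclidean length, so each family of rays still concentrates simultaneously on $IT_m$; you instead write $\beta_m=dg_m$ on the ball and note that the $F_m$-length of \emph{any} curve from $\partial\widetilde D_m$ to $x$ equals its Euclidean length plus $g_m(x)-g_0$, so minimizers --- hence the cut locus --- are unchanged. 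This is cleaner and strictly more general (it never uses the special ray structure of $\beta_m$), and it yields your closing observation that any exact $1$-form $dg_m$ with $|dg_m|_a<1$, $g_m$ non-constant and constant near the boundary, already proves the proposition as stated. One caveat: your proposed mechanism for the gluing --- that the bump functions vanish to infinite order near the interior interfaces --- is not accurate as written, since those interfaces (e.g.\ between the influence regions of a cap $\widetilde B_{j_1\cdots j_i}$ and an adjacent cone $\widetilde A_{j_1\cdots j_i}$) run along entire rays from a node to a tangent circle, where $h$ is not small; what makes matching possible is that along such an interface the two ray directions coincide (the perpendicular to the cone at the tangent circle passes through the node), so the covectors agree there once the profiles $h$ are chosen consistently, while higher-order matching needs extra care. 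The paper glosses over exactly this point as well, and your fallback via an arbitrary exact small $1$-form makes the proposition independent of it, so the argument stands.
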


This result is interesting in itself because it gives a simple example of Randers metric whose cut locus of a closed hypersurface is a tree (compare \cite{TS}).

At limit we obtain
\begin{proposition}
There is a Randers metric $(\widetilde{D},F=\alpha+\beta)$ on the smooth ball $\widetilde{D}=\lim_{m\to \infty}\widetilde{D}_{m}\subset \R^{n}$ with the following properties
\begin{enumerate}
\item $F$ is projectively flat,
\item the inner cut locus of the boundary $\partial \widetilde{D}$ with respect to $F$ is the infinite tree IT,
\end{enumerate}
where $F=\lim_{m\to\infty}F_{m}$.
\end{proposition}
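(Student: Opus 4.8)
The plan is to obtain $F$ directly as the pointwise limit $F=\lim_{m\to\infty}F_m$ of the Randers metrics furnished by the preceding proposition and then to check that this limit is a genuine Randers structure on $\widetilde D$ with the two stated properties. First I would record the compatibility built into the construction: by design $\beta_m$ is obtained from $\beta_{m-1}$ by modifying it only inside the cells --- the smoothed truncated cones and spherical caps --- newly created at depth level $m$, while on each interface between two such cells all the functions $h_i$ of Lemma \ref{lemma for h} vanish together with all their derivatives (they are identically zero near the endpoints of their intervals of definition, cf. Remark \ref{rem for h}). Hence $(\beta_m)_m$ is eventually stationary on every compact subset of $\widetilde D\setminus\widetilde E$, so $\beta:=\lim_{m\to\infty}\beta_m$ is a well-defined $C^\infty$ one-form there; we put $F:=\alpha+\beta=\lim_{m\to\infty}F_m$, with $\alpha$ the flat metric inherited from $\R^n$.

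Next I would verify that $F$ is a positive definite, projectively flat Randers metric. On each cell $\beta$ has the form $-h_i(r)\,dr$ for the appropriate radial coordinate $r$, with $h_i$ valued in $[0,1)$; since the cells have pairwise disjoint interiors and every point of $\widetilde D\setminus\widetilde E$ has a neighbourhood lying in a single cell, one gets $b^{2}:=a(\beta,\beta)=h_i(r)^{2}\le c_0^{2}<1$, where $c_0:=\sup_i\max h_i$ can be fixed strictly below $1$ uniformly in the level (the amplitude of each $h_i$ is a free parameter in Lemma \ref{lemma for h}). Thus $F$ is a positive definite Finsler structure of Randers type. Moreover, on each cell $\beta=-h_i(r)\,dr=d\!\bigl(-\int^{r}h_i(s)\,ds\bigr)$ is exact, so $d\beta=0$ on a dense open subset of $\widetilde D$ and, $\beta$ being continuous, everywhere; hence $\beta$ is closed. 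Since $(\widetilde D,\alpha)$ is flat its geodesics are straight segments, and by the characterization recalled in Section \ref{sec:5} the $F$-geodesics coincide with them as point sets, so $(\widetilde D,F)$ is projectively flat, which is the first assertion.

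It remains to control $F$ at the fractal set $\widetilde E$, and this is where the freedom in the amplitudes is spent. Because $\beta$ vanishes on all the cell interfaces accumulating at a point $\widetilde e\in\widetilde E$ (and on $\partial\widetilde D$ itself), a neighbourhood of $\widetilde e$ in $\widetilde D$ meets only cells of level $\ge m$ for large $m$, on which $|\beta|$ is bounded by the level-$m$ amplitude while the cells shrink geometrically in the sense made precise in Sections \ref{sec:2} and \ref{sec:4}. Choosing the amplitudes of the level-$m$ bumps to decay geometrically fast enough relative to that shrinking, the very same finite-difference estimate as in the proof of Proposition \ref{deg of diff} --- now applied to the coefficient of $\beta$, respectively to its radial potential, along directions at $\widetilde e$ --- shows that all partial derivatives of $\beta$ up to order $k$ exist and are continuous at $\widetilde e$, with $\beta|_{\widetilde E}=0$. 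Hence $\beta$ is at least $k$-differentiable on $\widetilde E$ and $C^\infty$ on $\widetilde D\setminus\widetilde E$, so $F$ is an at least $k$-differentiable Randers metric on $\widetilde D$ --- no more than that, since $\widetilde D$ itself is only $k$-differentiable at $\widetilde E$ by Proposition \ref{deg of diff}.

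Finally I would identify the inward cut locus. As $\beta$ is closed, the inward $F$-geodesics from $\partial\widetilde D$ coincide, as point sets, with the inward $\alpha$-geodesics, i.e. with the Euclidean straight normal segments; since $\beta$ vanishes near $\partial\widetilde D$ and near $IT$, these segments meet $\partial\widetilde D$ $F$-orthogonally and reach $IT$ in the $F$-metric exactly where they do in the Euclidean one. Along such a segment $\rho$ of Euclidean length $l$, the computation in the proof of Proposition \ref{Randers properties}(3) shows that $\mathcal{L}_F(\rho)$ depends only on $l$; as all the inward normal segments that concentrate at a given Euclidean cut point of $\partial\widetilde D$ have a common length $l$ --- they are the equal-length rays singled out in Section \ref{sec:4} --- they share a common $F$-length, hence each of them is $F$-minimizing precisely up to that cut point. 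Therefore the inward $F$-cut locus of $\partial\widetilde D$ equals the inward Euclidean cut locus, which by Sections \ref{sec:2} and \ref{sec:4} is the infinite tree $IT$. The main obstacle in making all of this rigorous is the third step: ensuring that the limiting one-form, and hence $F=\lim F_m$, is genuinely defined and at least $k$-differentiable on the fractal set $\widetilde E$ with $b^{2}<1$ holding uniformly in the depth; the closedness of $\beta$ and the length identity of Proposition \ref{Randers properties}(3) then yield the remaining assertions immediately.
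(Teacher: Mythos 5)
Your proposal is correct and follows essentially the same route as the paper, which in fact gives no detailed proof here beyond asserting the statement "at limit" and remarking that the regions carrying $\beta_m$ shrink as $m\to\infty$ while $\beta$ is unchanged in region (1) and in the $\varepsilon$-dilatation. Your verification (stationarity of $\beta_m$ away from the accumulation set, the uniform bound $b^2<1$ from the free amplitude in Lemma \ref{lemma for h}, closedness giving projective flatness, and the length identity of Proposition \ref{Randers properties} identifying the cut locus with $IT$) is a faithful and welcome expansion of exactly that argument.
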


We remark that, for any finite $m$, the magnetic field $\beta_{m}$ defined here is initially defined inside regions (1), (2), (3) and extended by $\varepsilon$-dilatation. However, one can easily see that the regions (2) and (3), considered inside $H_m$, become smaller as $m$ increases. 
At limit $m\to \infty$, the regions (2), (3) shrink to domains of Hausdorff dimension one and zero, respectively, in other words the intensity of magnetic field $\beta$ inside $H_m$ decreases to zero as $m$ approaches infinity. Nevertheless, $\beta$ is unchanged in region (1) and in the $\varepsilon$-dilatation of (2) and (3). 


\begin{proof}[Proof of Theorem \ref{theorem B}]

Let us consider again the construction of the $n$-sphere $M=\Sph^{n}$ endowed with a Riemannian metric $g$ such that $g$ restricted to $M\setminus (interior\ of\ \Delta)$ coincides with with the initial metric $a_{ij}$ defined on $\widetilde D$ (see the Proof of Theorem \ref{theorem A} in Section \ref{sec:4}). 

On the other hand, we have the magnetic field $\beta$ defined above on $\widetilde D$. For the sake of simlicity we assume here that  $\beta$ actually acts only on the $\varepsilon$-dilatation region bounded by $\widetilde H$ and $\partial\widetilde D$. 

From the discussion above it follows that the Randers metric $(M,g+\beta)$  is a Finsler metric on the $n$-sphere $M=\Sph^{n}$ whose cut locus is $IT$ having the same order of differentiability with $(M,g)$.

This magnetic field is acting only on the tropical region of the North Hemisphere. 
\setlength{\unitlength}{1cm} 
\begin{center}
\begin{picture}(7, 8)

\includegraphics[height=7cm, angle=0]{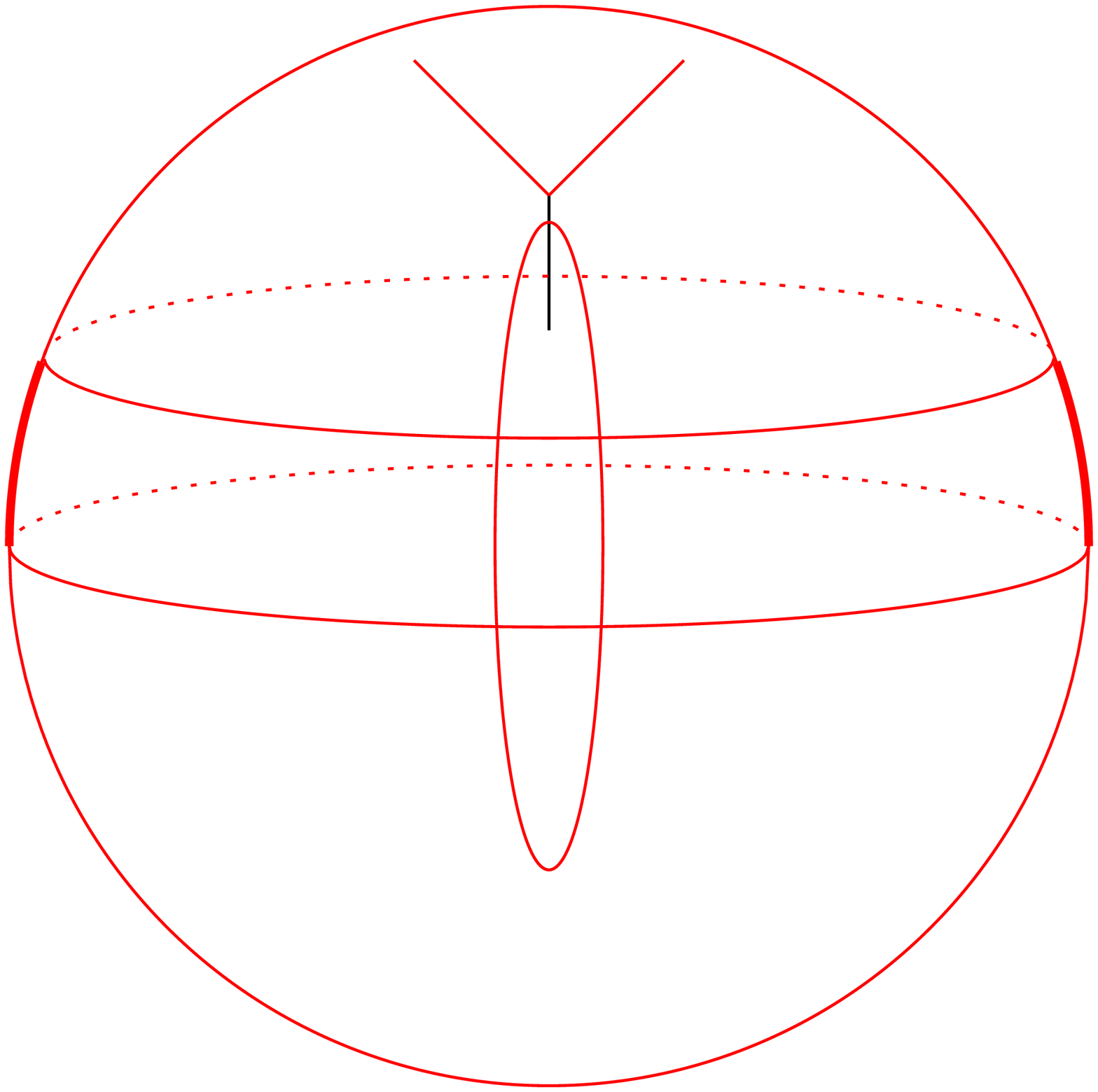}
\put(-5.7,3.4){\vector(0,1){0.7}}
\put(0.3,3.9){$         \textrm{\LARGE{\}}}       \ \ \varepsilon-dilatation$}
\put(-1,6){$ M= \Sph^{n}$}
\put(-1.7,3.4){\vector(0,1){0.7}}
\put(-4.7,3.4){\vector(0,1){0.7}}
\put(-1.3,3.6){$\beta$}
\put(-2.65,3.4){\vector(0,1){0.7}}
\put(-3.7,1.2){$p$}
\put(-3.55,1.45){$.$}
\put(-3.3,3.4){$\wedge$}
\put(-3.96,3.4){$\wedge$}
\put(-3.15,2){$\gamma$}
\put(-2.9,6){$IT$}
\put(-7.7,3.2){$\Sph^{n-1}$}
\put(-7.5,4.5){$\partial D$}
\put(-5.6,5.6){$\widetilde D$}
\put(-5.6,1.3){$\Delta$}
\end{picture}
\end{center}

{\bf Figure 14.} The Randers metric on $M$ with tropical magnetic field.
\bigskip

Of course $\beta$ is a closed 1-form and therefore the geodesics of the Randers metric $F=g+\beta$ coincide with the Riemannian geodesics of $(M,g)$ as point sets. 

The cut locus of $p$ with respect to the Randers metric $F=g+\beta$ coincides with the infinite tree $IT$ and since $(M,g)$ is $C^{k}$, but not $C^{\infty}$ this property is inherited by $F$ as well and hence Theorem \ref{theorem B} is proved.

$\qedd$
\end{proof}

\begin{remark}
Actually, 
this magnetic field $\beta$ can be extended in the interior of $\Delta$. In this way we obtain a Randers metric whose magnetic field acts in all  $\Delta$. 

 Let us denote by 
$\gamma:[0,a]\to M$, $\gamma(0)=p$, $\gamma(a)=q$ a minimizing geodesic segment of $(M,g)$ that joins the point $p$ with a point $q\in \mathcal C (p)$. Using for example Riemannian geodesic coordinates, any geodesic segment from $p$ to its cut point is a straight line. Then using Lemma \ref{lemma for h} and Remark \ref{rem for h} we can extend the tropical magnetic field defined on the $\varepsilon$-dilatation region to entire hemisphere $\Delta$.

Obviously, our magnetic field is zero at $p$, increases in strength and decreases again to zero after crossing the equator when moving from south to north such that $\beta$ vanishes on $\partial D$.

\end{remark}



\bigskip


Jinichi ITOH,

Faculty of Education,
Kumamoto University,
Kumamoto 860,
Japan

\medskip
{\tt j-itoh@kumamoto-u.ac.jp}

\bigskip

Sorin V. SABAU,


School of Science,
Dep. of Mathematics,
Tokai University,
Sapporo 005\,--\,8600,
 Japan

\medskip
{\tt sorin@tspirit.tokai-u.jp}


\begin{thebibliography}{MMMM}


\bibitem{BCS}
     D.~Bao, S. S.~Chern, Z.~Shen,
     An Introduction to Riemann Finsler Geometry, Springer, GTM \textbf{200}, 
2000.


\bibitem{BRS}
 D. Bao, C. Robles and Z. Shen,
   {\it Zermelo navigation on Riemannian manifolds}, J. Diff. Geom. 66(2004),   377-435.



\bibitem{Buch1}
M. Buchner,
{\it Simplicial structure of the real analytic cut locus}, Proc. of AMS {\bf 64} (1977), 118--121.

\bibitem{Buch2}
M. Buchner,
{\it The structure of the cut locus in dimension less than or equal to six}, Compositio Math. {\bf 37} (1978), 103--119.

\bibitem{Fal}
K. Falconer,
Fractal geometry, Second Edition, John Wiley \& Sons, 2003.

\bibitem{GS}
H. Gluck, D. Singer,
{\it Scattering of geodesic fields I}, Ann. of Math. {\bf 108} (1978), 347--372.

\bibitem{I1}
J. Itoh,
{\it The length of a cut locus on a surface and Ambrose's problem},
J. Diff. Geom. {\bf 43} (1996), 642--651.

\bibitem{IK}
J.~Itoh, K.~Kiyohara, 
{\it The cut loci and the conjugate loci on ellipsoids},
Manuscripta Math. {\bf 114} (2004), 247--264.

\bibitem{IT}
J.~Itoh, M. Tanaka,
{\it The dimension of a cut locus on a smooth Riemannian manifold}, Tohoku Math. J. {\bf 50} (1998), 571--575.


\bibitem{LN}
Y. Y.~Li, L.~Nirenberg,
{\it The distance function to the boundary, Finsler geometry, and the singular set of viscosity solutions of some Hamilton-Jacobi equations}, Comm. Pure Appl. Math. {\bf 58}, No. 1 (2005), 85--146. 

\bibitem{M}
S. B. Myers,
{\it Connections between differential geometry and topology I}, Duke Math. J. {\bf 1} (1935), 376--391.


\bibitem{SS}
S. V. Sabau, H. Shimada,
{\it Finsler manifolds with reversible geodesics}, Revue Roum. Pure Appl. Math, 57, 1 (2012), 91--103.


\bibitem{S}
Z.~Shen, 
{\it Lectures on Finsler Geometry}, World Scientific, 2001.

\bibitem{ShT}
K.~Shiohama, M.~Tanaka,
{\it Cut loci and distance spheres on Alexandrov surfaces},
S\'eminaries \& Congr\`es, Collection SMF {\bf No.1}, Actes de la table ronde de G\'eom\'etrie diff\'erentielle en l'honneur Marcel Berger (1996), 531--560.

\bibitem{TS}
M.~Tanaka, S. V. Sabau,
{\it The cut locus and distance function from a closed subset of a Finsler manifold}, 
preprint.

\bibitem{W}
A. Weinstein,
{\it The cut locus and conjugate locus of a Riemannian manifold}, Ann. of Math. {\bf 87} (1968), 29--41.





\end{thebibliography}
\end{document}